\theoremstyle{plain} 
\newtheorem{thm}{Theorem}
\theoremstyle{definition} 
\newtheorem{defn}[thm]{Definition}
\newtheorem{ex}{Example}
\newcommand*{\circled}[2][]{\tikz[baseline=(C.base)]{
    \node[inner sep=0pt] (C) {\vphantom{1g}#2};
    \node[draw, circle, inner sep=3pt, yshift=1pt] 
        at (C.center) {\vphantom{1g}};}}
\providecommand{\customgenericname}{}
\newcommand{\newcustomtheorem}[2]{%
  \newenvironment{#1}[1]
  {%
   \renewcommand\customgenericname{#2}%
   \renewcommand\theinnercustomgeneric{##1}%
   \innercustomgeneric
  }
  {\endinnercustomgeneric}
}
\title{Dihedral linking invariants}
\author[P.\ Cahn]{Patricia Cahn}
\address{Department of Mathematics and Statistics, Smith College, USA}
\email{pcahn@smith.edu}
\author[E.\ Catania]{Elise Catania}
\address{School of Mathematics, College of Science and Engineering, University of Minnesota, USA}
\email{catan042@umn.edu}
\author[S.\ Chimgee]{Sarangoo Chimgee}
\address{Department of Mathematics and Statistics, Smith College, USA}
\email{sara.chimgee@gmail.com}
\author[O.\ Del Guercio]{Olivia Del Guercio}
\address{Department of Mathematics, Rice University, USA}
\email{od6@rice.edu}
\author[J.\ Kendrick]{Jack Kendrick}
\address{Department of Mathematics, University of Washington, USA}
\email{jackgk@uw.edu}
\begin{document}
\maketitle

\begin{abstract} A Fox p-colored knot $K$ in $S^3$ gives rise to a corresponding $p$-fold dihedral branched cover $M$ of $S^3$ along $K$.  The pre-image of the knot $K$ under the covering map is a $\dfrac{p+1}{2}$-component link $L$ in $M$, and the set of pairwise linking numbers of the components of $L$ is an invariant of $K$. This powerful invariant played a key role in the development of early knot tables, and appears in formulas for many other important knot and manifold invariants. We give an algorithm for computing this invariant for all odd $p$, generalizing an algorithm of Perko. We then extend this  algorithm to compute linking numbers of arbitrary curves in a $p$-fold dihedral branched cover of $S^3$ along $K$. As an application, we compute Kjuchukova's ribbon obstruction $\Xi_p$ using a method of the first author and Kjuchukova.  We also tabulate the dihedral linking invariant for all $p$-colorings of prime knots of crossing number less than or equal to 13, with $p\geq 3$ prime.  Finally, we demonstrate the strength of the dihedral linking invariant by comparing it to several polynomial invariants. For example, the dihedral linking invariant distinguishes more than 98\% of the 1183 prime non-mutant knot pairs with the same Fox coloring invariant and the same HOMFLY-PT polynomial through 13 crossings.

\end{abstract}

\section{Introduction}

Linking numbers of curves in branched covers of the 3-sphere along a knot $K$ are a rich source of invariants of $K$.  One such invariant is the {\it $p$-dihedral linking invariant}, which is defined for Fox $p$-colorable knots, where $p$ is odd.  A Fox $p$-coloring of $K$ gives rise to a $p$-fold branched covering $f: M\rightarrow S^3$ along $K$.  The knot $K$ lifts to a $(p+1)/2$-component link in $M$, and the set of pairwise linking numbers of its components is the dihedral linking invariant of the $p$-colored knot $K$.  

We refer the reader to ~\cite{perko2016historical} for details on the rich history of non-cyclic branched covering spaces in knot theory and their associated linking invariants, particularly the key role they played in the development of early knot tables.  For example, using computations of Bankwitz and Schumann \cite{bankwitz1934viergeflechte}, Reidemeister used the dihedral linking invariant to distinguish two knots with the same Alexander polynomial ~\cite{reidemeister1938knot}.  

Linking invariants derived from branched covers, and dihedral covers in particular, also appear in formulas for many other knot and manifold invariants, for example: Cappell and Shaneson's formula for the Rokhlin $\mu$-invariant~\cite{CS1975invariants}, Litherland's formula for the Casson-Gordon invariants~\cite{litherland1980formula}, and Kjuchukova's $\Xi_p$-invariant ~\cite{kjuchukova2018dihedral,cahnkjuchukova2018computing}, which gives an obstruction to a knot being homotopy-ribbon~\cite{cahnkjuchukova2017singbranchedcovers, geske2021signatures}, as well as bounds on 4-genera associated to a $p$-colored knot ~\cite{cahnkju2018genus}.

Perko gave an explicit geometric algorithm for computing the dihedral linking invariant for all 3-colorable knots in his 1964 thesis, and tabulated the invariant for all such knots up to 11 crossings ~\cite{perko1964thesis}.  The primary challenge in generalizing Perko's algorithm to $p>3$ is to find a simple way to combinatorially encode a cell structure on the covering space, which also lends itself to fast computation.  We develop such a method and give a fully general geometric algorithm for computing the dihedral linking invariant for all $p$-colorable knots.  We then tabulate the invariant for $p$-colorable prime knots of crossing number $\leq 13$. Code and additional data can be found at ~\cite{cahngithubdihedrallinking}. 

Hartley and Murasugi gave general, algebraic formulas for linking numbers of branch curves via the Reidemeister-Schreier algorithm, contrasting them with Perko's method in ~\cite{perko1964thesis}, which they say requires ``considerable geometric intuition'' ~\cite{hartley1977covering}.  Hartley used the method in \cite{hartley1977covering} to show that many knots are non-invertible using linking numbers derived from other metacyclic covering spaces \cite{hartley1983identifying}; in that paper, he also discusses his implementation of the Hartley-Murasugi method.  A number of additional methods have been introduced for computing the dihedral linking invariant for particular knots and knot families.  For example, in ~\cite[Theorem 4]{perko1976dihedral}, Perko gave an elegant method for computing linking numbers for 2-bridge knots from the Schubert normal form.  Perko also introduced a visually appealing way of computing the linking invariant from a {\it Perko surface}, a Seifert-like surface constructed from the diagram of a $p$-colored knot~\cite{perko2016historical}; see Figure ~\ref{perko.fig}. However, such surfaces may not exist for a given diagram of the knot, and it is not known whether every colorable knot admits a diagram with a Perko surface.

\begin{figure}[htbp]
	\includegraphics[width=2in]{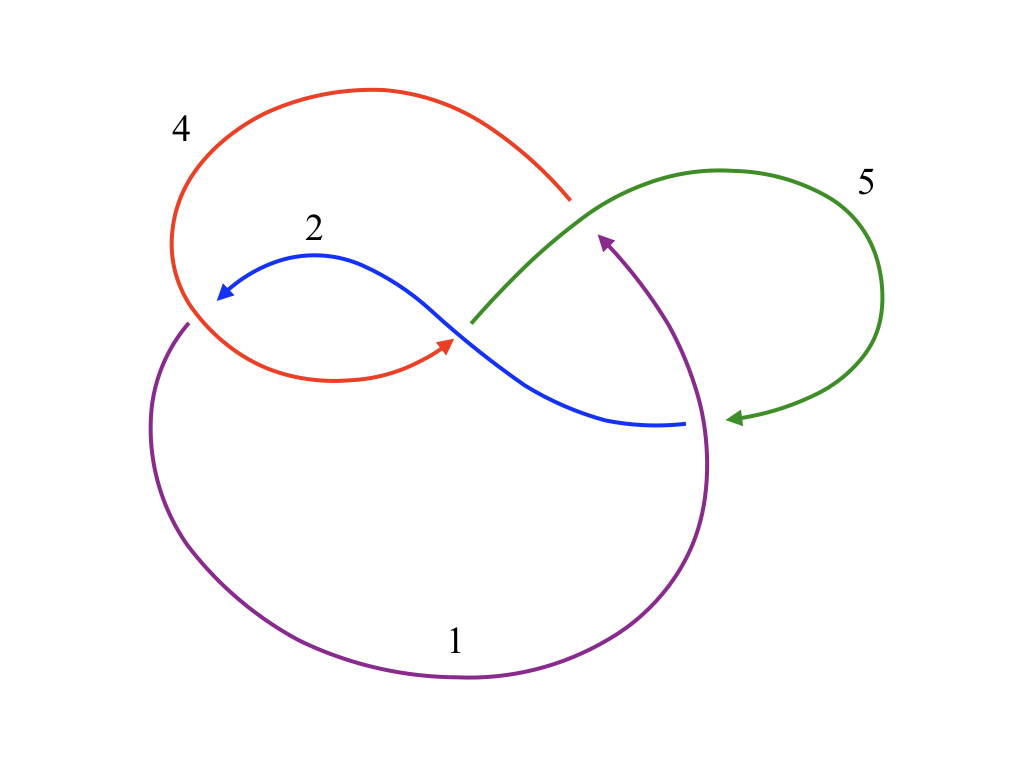}\includegraphics[width=2in]{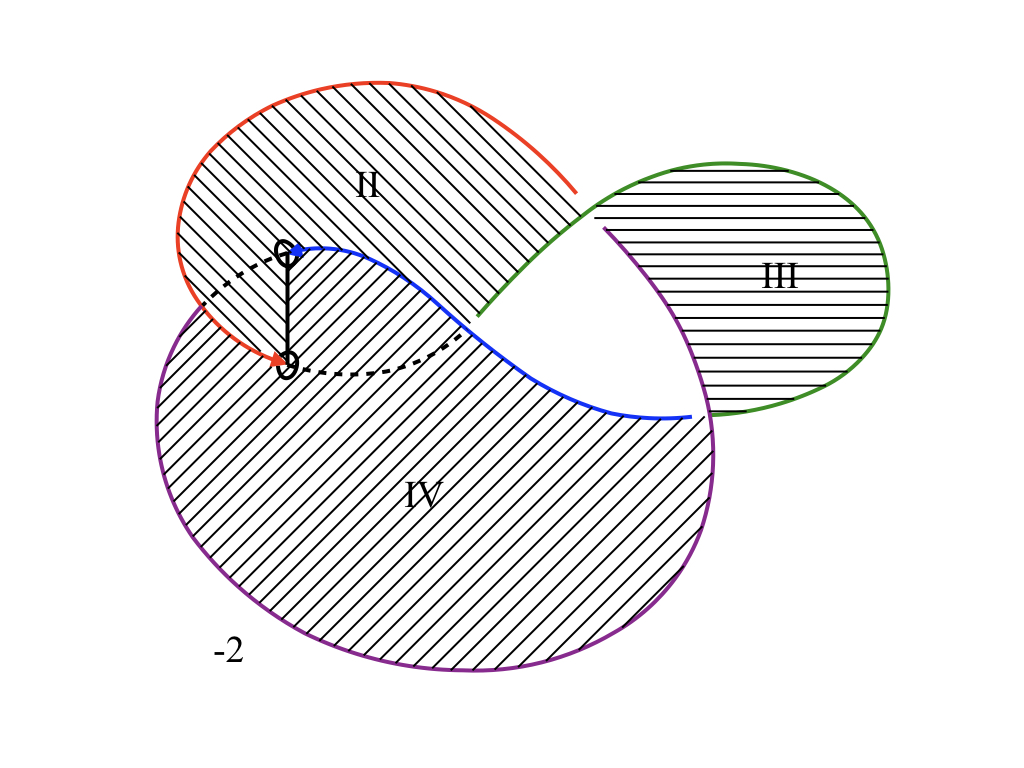}\includegraphics[width=2in]{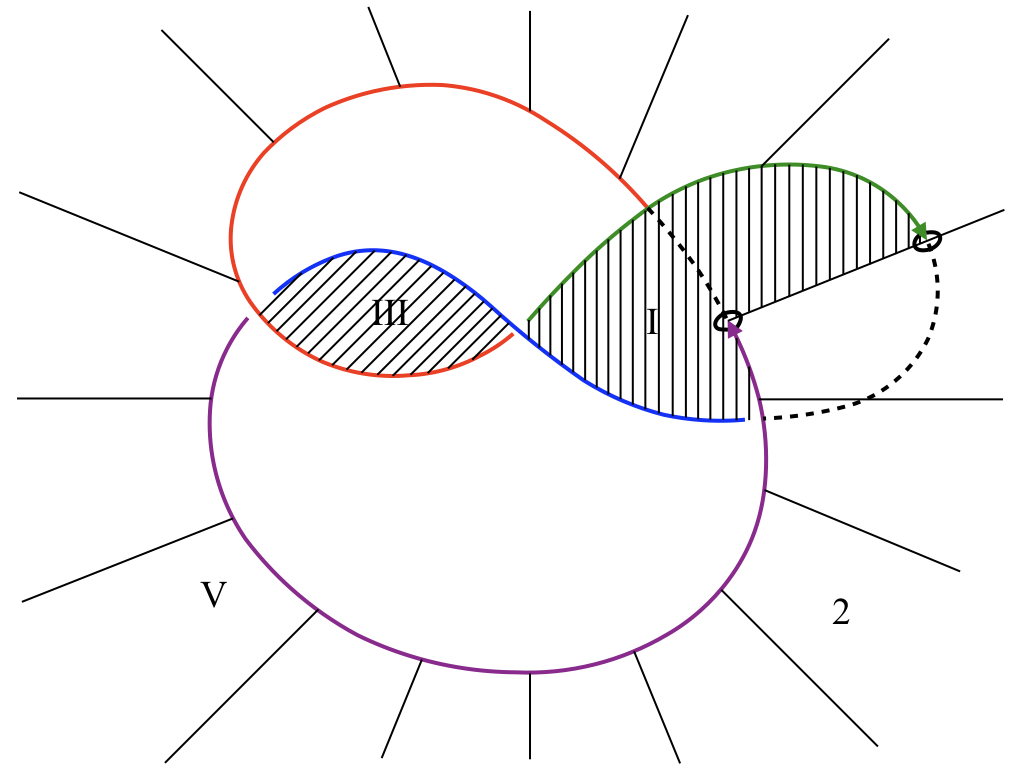}
	\caption{Perko's surfaces bounding the branch curves of the Figure-8 knot, which has 5-dihedral linking invariant $\{0,2,-2\}$. Figure adapted from Figure 1 of ~\cite{perko2016historical}.}
	\label{perko.fig}
\end{figure}

We now review the necessary background information.  Consider a diagram $D_K$ of $K$, and let $p$ be an odd number. A Fox $p$-coloring of $D_K$ is an assignment of the values $\{0,1,\dots,p-1\}$ to the arcs of $D_K$ such that at each crossing, $a+b \equiv 2c \mod p$, where $a$ and $c$ are the values on the understands and $c$ is the value on the overstrand.  Equivalently, a Fox $p$-coloring is described by a surjection $\rho:\pi_1(S^3-K)\twoheadrightarrow D_p$, where $D_p$ denotes the dihedral group of order $2p$.   Label the vertices of a regular $p$-gon $\{0,\dots,p-1\}$. Denote by $\text{Ref}_n\in D_p$ the reflection over a line through the $n^{th}$ vertex of a regular $p$-gon, and denote by $\text{Ref}_n(x)\in\{0,\dots,p-1\}$ the image of vertex $x$ under this reflection.  The standard meridian of an arc in the diagram $D_K$ colored $n$ is mapped to the reflection $\text{Ref}_n$, and a crossing with overstrand colored $c$ and understrands colored $a$ and $b$ satisfies $\text{Ref}_c(a)=b.$ 
 
 Given a $p$-coloring $\rho:\pi_1(S^3-K)\twoheadrightarrow D_p$, the corresponding {\it irregular $p$-fold dihedral cover of $S^3$}, denoted $M_\rho$, is the branched cover of $S^3$ along $K$ corresponding to a subgroup $\rho^{-1}(\mathbb{Z}_2)$ of $\pi_1(S^3-K)$. The knot $K$ has one lift $K^0\subset M_\rho$ of branching index 1, and $(p-1)/2$ lifts $K^1,\dots, K^{(p-1)/2}\subset M_\rho$ of branching index 2.  
 
 The linking number $\text{lk}(\alpha,\beta)\in \mathbb{Q}$ of two knots $\alpha$, $\beta$ in a closed, connected, oriented 3-manifold $M$ is defined whenever both $\alpha$ and $\beta$ are rationally null-homologous in $M$.  Suppose $\Sigma_\alpha$ is a 2-chain such that $\partial \Sigma_\alpha=k\cdot \alpha$, where $k\in \mathbb{Z}$.  Then $\text{lk}(\alpha,\beta)=\dfrac{1}{k}\left(\beta\cdot \Sigma_\alpha\right)$, where $\cdot$ denotes the algebraic intersection number.  This linking number is well-defined and symmetric ~\cite{birman1980seifert}.   If either $\alpha$ or $\beta$ represent a nonzero class in $H_1(M;\mathbb{Q}),$ we set $\text{lk}(\alpha,\beta)=\infty$. The {\it $p$-dihedral linking invariant} of $K$ together with a choice of $p$-coloring $\rho$ of $K$ is the multiset
 
 $$DLN(K,\rho) = \left\{ \text{lk}(K^i,K^j) | i\neq j \in \{0,1,\dots,(p-1)/2\}\right\}.$$

 An outline of our algorithm is as follows. Neuwirth gave a combinatorial method for constructing a branched cover of a 3-manifold $M$ along a 1-subcomplex $K$ from a choice of {\it splitting complex} for $K$, and a permutation representation of the knot group $\pi_1(M-K)\twoheadrightarrow S_n$ corresponding to the desired branched cover \cite{neuwirth2016chapter}.   We carry out this construction where $M=S^3$, the splitting complex $C$ is the cone on $K$, and the permutation representation is the coloring $\rho:\pi_1(S^3-K)\twoheadrightarrow D_p \leqslant S_{p}$. The resulting construction gives the irregular $p$-fold cover $M_\rho$ of $S^3$ along $K$, equipped with a cell structure determined by the lift $\tilde{C}$ of $C$ to $M_\rho$.  As noted in ~\cite{perko2016historical}, this cell structure on $M$ actually dates back to Wirtinger.

 We then find a rational 2-chain $\Sigma^j$ bounding each connected component $K^j$ of the singular set, by solving a $qn\times qn$ system of linear equations, where $n$ is the number of crossings in a chosen diagram of $K$ and $q=\dfrac{p-1}{2}$.  If a solution exists, we conclude $K^j$ is rationally null-homologous, and $\text{lk}(K^i,K^j)$ exists provided $K^i$ is rationally null-homologous as well.  If no solution exists, $K^j$ is not rationally null-homologous.  To compute $\text{lk}(K^i,K^j)$ we choose a push-off of $K^i$ transverse to the 2-chain $\Sigma^j$, and compute the signed intersection number $K^i\cdot \Sigma^j$.

 In Section ~\ref{fig8example.sec}, we carry out this algorithm for the Figure-8 knot.  We then turn to computing the invariant for an arbitrary $p$-colored knot $(K,\rho)$.  In Section ~\ref{setup.sec}, we introduce {\it configuration diagrams}, a fully combinatorial method for encoding the cell structure on the corresponding branched cover $M_\rho$. In Section ~\ref{2chain.sec}, we find a rational 2-chain $\Sigma^j$ with boundary $K^j$ using the configuration diagram of $(K,\rho)$, and in Section ~\ref{intersectionnumber.sec}, we compute the linking number of $K^j$ and $K^i$ using the configuration diagram of $(K,\rho)$.    Additional worked examples are in Section ~\ref{examples.sec}.

 In Section~\ref{xi.sec}, we explain how to extend the algorithm above to compute linking numbers between lifts of curves $\gamma,\delta\subset S^3-K$ to the $p$-fold dihedral cover $M_\rho$ corresponding to a $p$-coloring $\rho$ of $K$.  We call such curves {\it pseudo-branch curves}. An algorithm for computing linking numbers of lifts of pseudo-branch curves was given by the first author and Kjuchukova in \cite{cahnkjuchukova2016linking} in the case $p=3$, for dihedral covers, and for all $p$ in~\cite{cahn2023linking} in the case of cyclic covers.  Such linking numbers appear in a formula for Kjuchukova's $\Xi_p$ invariant~\cite{kjuchukova2018dihedral}, which is of interest because it gives rise to a homotopy ribbon obstruction \cite{cahnkjuchukova2017singbranchedcovers},~\cite{geske2021signatures}, and bounds on 4-genera associated to a $p$-colorable knot~\cite{cahnkju2018genus}. The first author and Kjuchukova gave a diagrammatic algorithm for computing $\Xi_p$~\cite{cahnkjuchukova2018computing}. In part because the algorithm in ~\cite{cahnkjuchukova2016linking} for computing linking numbers of pseudo-branch curves was specific to the case $p=3$, the examples in~\cite{cahnkjuchukova2018computing} are carried out only for $p=3$.  The above extension of the algorithm to the computation of linking numbers of pseudo-branch curves for arbitrary odd $p$ allows us to carry out the algorithm in~\cite{cahnkjuchukova2018computing} when $p>3$. As an application, we carry out one such example in Section~\ref{xiexample.sec}.
 
 In Section~\ref{tabulation.sec}, we give an overview of our methods for tabulating the dihedral linking invariant for prime knots of crossing number $\leq 13$ that are $p$-colorable, for prime $p\geq 3$.  The code used to produce the tabulation and the tabulation itself are available at ~\cite{cahngithubdihedrallinking}. We then compare the dihedral linking invariant to the strength of the Fox coloring invariant alone (that is, the number of non-trivial $p$-colorings of $K$ for each odd prime $p$), as well as to the Alexander, Jones, HOMFLY-PT, Kauffman and Khovanov polynomials, using data tabulated in KnotInfo~\cite{knotinfo}; the Khovanov data on KnotInfo is computed using ~\cite{KnotJob}.  The dihedral linking invariant proves to be particularly strong, especially when restricted to non-mutant knot pairs.  For each given polynomial invariant above, the dihedral linking invariant distinguishes between 94.3\% and 98.3\% of non-mutant prime knot pairs of crossing number $\leq 13$ with the same Fox coloring invariant and the same given polynomial invariant.

 {\it Acknowledgements.} This work was supported by NSF Grants DMS-1821212 and 2145384 to P. Cahn.  We thank Charles Livingston for valuable discussions regarding the tabulation of the linking invariant; Nathan Dunfield for showing us an alternative method of computing the linking invariant using SnapPy; and Sebastian Baader, who first asked us whether the linking invariant can distinguish mutant knots.

  \section{Extended Example}\label{fig8example.sec}
  
  In this section, let $K$ be the Figure-8 knot, with diagram $D_K$ and 5-coloring $\rho$ as shown in Figure ~\ref{cone.fig}. We will carry out our algorithm on $K$.

 The 3-manifold $M_\rho$ can be constructed combinatorially from the diagram $D_K$ by means of a splitting complex for $K$, namely the cone on $D_K$ \cite{neuwirth2016chapter}.  First, the cone on $D_K$ gives rise to an associated cell structure on $S^3$, as shown in Figure ~\ref{cone.fig}, with cone point at infinity.

 \begin{figure}[htbp]
 \includegraphics[width=5in]{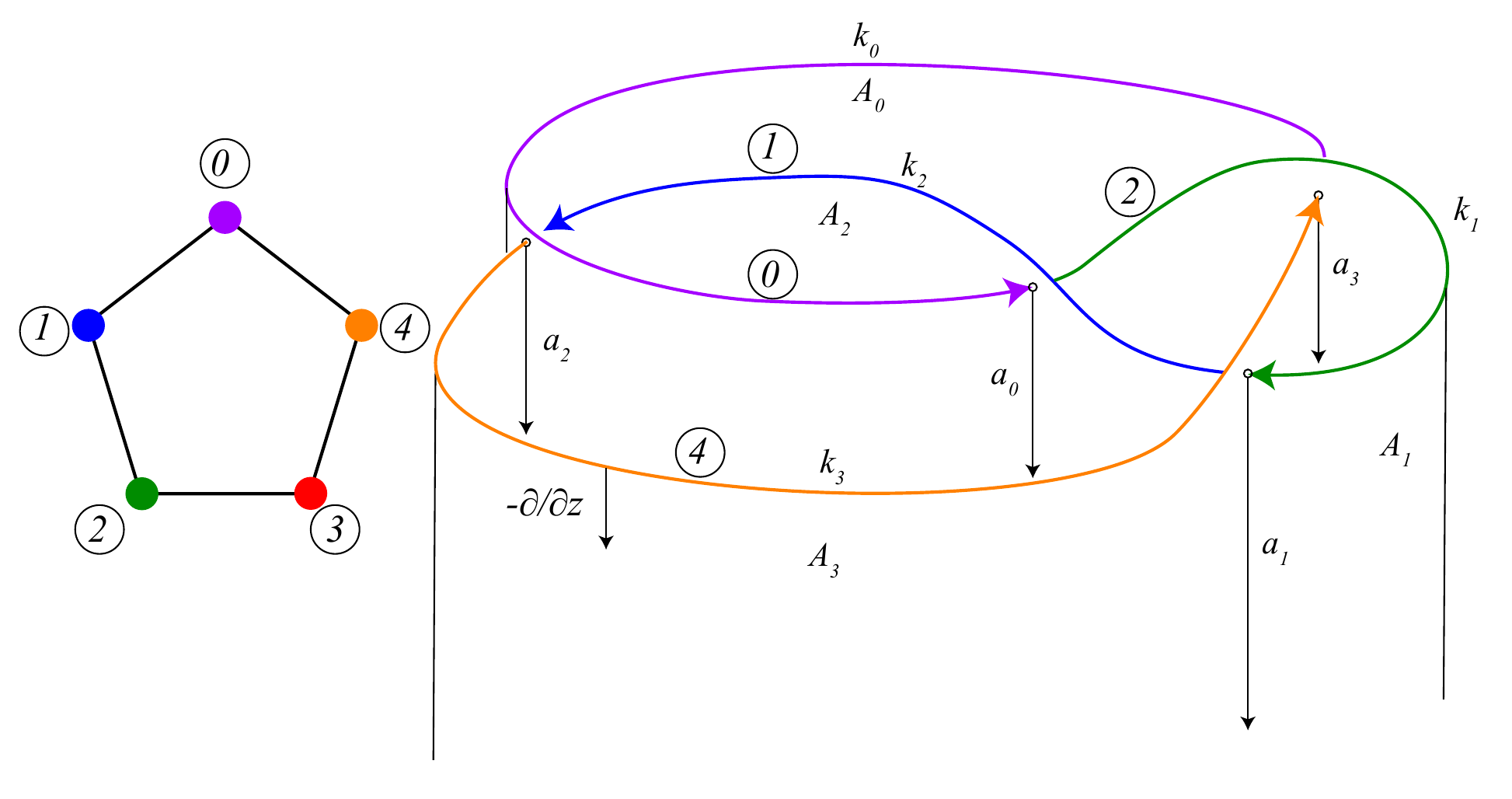}
 \caption{Cell structure on $S^3$ determined by the cone on $D_K$.}\label{cone.fig}
 \end{figure}

We label the cells as follows:
\begin{enumerate}
	\item ``horizontal" 1-cells $k_0$, $k_1$, $k_2$, $k_3$,  which are arcs in the diagram $D_K$ of $K$;
	\item ``vertical'' 1-cells $a_0$, $a_1$, $a_2$, $a_3$, below each crossing of $D_K$;
	\item ``vertical'' 2-cells $A_0$, $A_1$, $A_2$, $A_3$, below each arc of $D_K$
	\item one 3-cell, $E$, the complement of the cone
	\item one 0-cell $e$ at the cone point
	\item  a 0-cell $e_i$ at the head of each arc $k_i$
\end{enumerate}

Denote the $k$-skeleton of this cell structure by $C^k$.

Next we lift this cell structure to the dihedral cover $M_\rho$ of $S^3$, branched along $K$ determined by the 5-coloring $\rho$ of $K$ shown  in Figure ~\ref{cone.fig}.  Consider the open cover of the 2-skeleton $C^2$ which consists of: a small open 3-ball $U_i$ centered at $e_i$ and containing a segment of the over-arc at crossing $i$; a small tubular neighborhood $V_j$ of each arc $k_i$ of $D_K$ disjoint from $D_K-k_i$, and a (topological) 3-ball neighborhood $W$ of $e$ disjoint from $D_K$ and containing the remainder of the 2-skeleton $S^2$; see Figure ~\ref{skeletonnbd.fig}.  Together with the 3-cell $E$, this forms an open cover of $S^3$.  It suffices to understand the lift of the cell structure on its intersection with each of the $U_i$, $V_j$, $W$, and $E$.

\begin{figure}\includegraphics[width=3in]{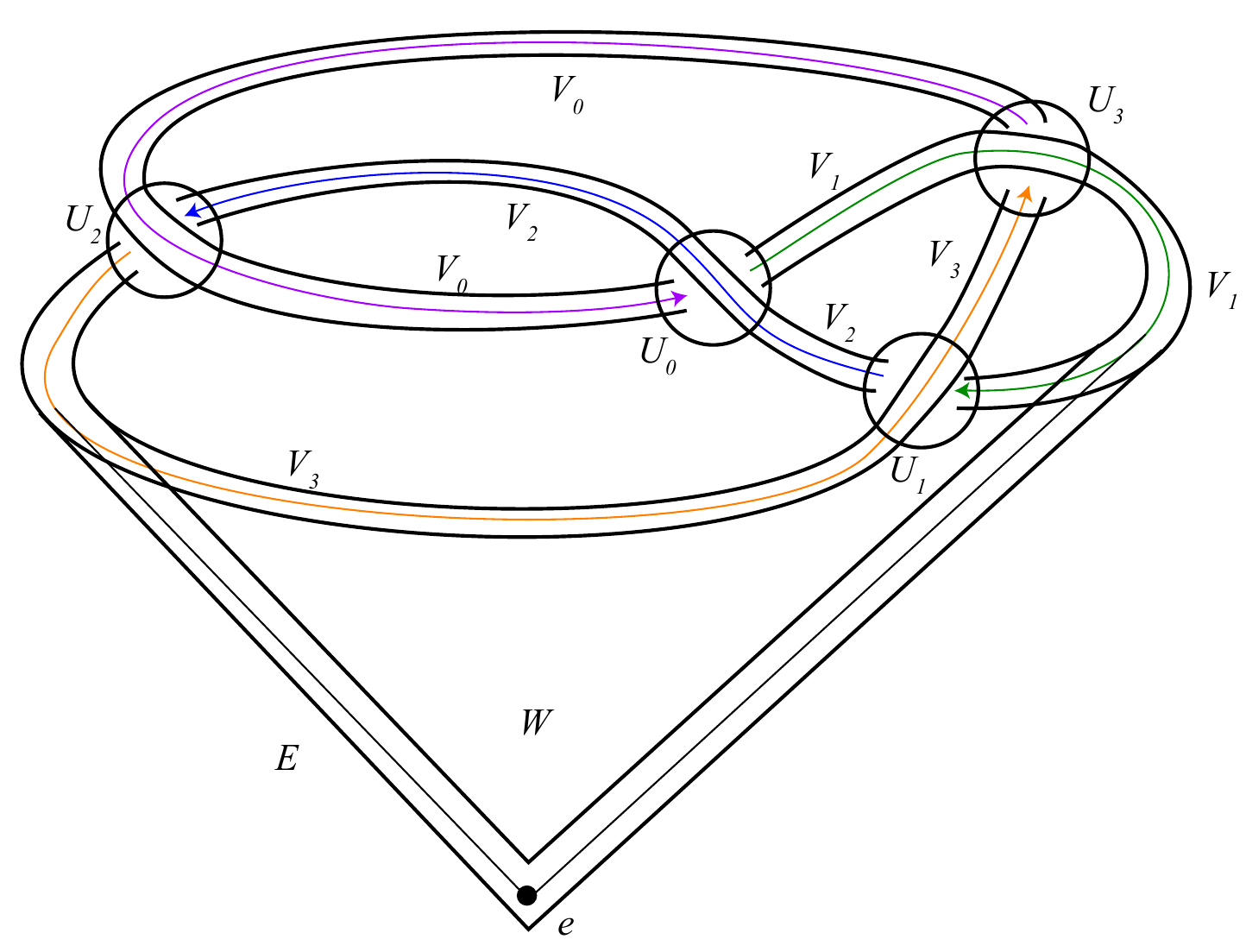}
	\caption{Open cover of the 2-skeleton determined by $D_K$.}
	\label{skeletonnbd.fig}
\end{figure}

Since $M_\rho$ is a 5-fold dihedral cover of $(S^3,K)$, $K$ has one index-1 preimage $K_0$ and two index-2 preimages $K_1$ and $K_2$. Denote the three corresponding lifts of each 1-cell $k_i$ by $k_i^0$, $k_i^1$,  and $k_i^2$.  

There are five lifts of the 3-cell $E$, denoted $E^0\dots E^4$.  We label them as follows.  Let $a$ and $b$ be two points in $E$, and $g:[0,1]\rightarrow S^3$ a path from $a$ to $b$ which intersects exactly one of the two-cells $A_i$ once, transversely, where $A_i$ is colored $n$.  Denote by $\tilde{g}_m$ the lift of $g$ such that $\tilde{g}_m(0)\in E^n$.  Then $\tilde{g}_m(1)\in E^{\text{Ref}_n(m)}$. This labelling is well-defined since the monodromy homomorphism $\rho$ is trivial on the complement $E$ of the cone. 

The neighborhood $W$ of the $0$-cell $e$ has 5 lifts to $M$.  The pair $(W,C^2)$ is homeomorphic to the cone on $(S^2,G_K)$ where $G_K$ is the 4-valent graph corresponding to the knot diagram $D_K$.  Its five lifts intersect the 3-cells $E_i$ as shown in Figure ~\ref{dehncoloring.fig}.  The subscripts on the $E_i$ are labeled by the 5 Dehn colorings of $K$ corresponding to the Fox coloring $\rho$, as defined in \cite{carter2014three}.

\begin{figure}\includegraphics[width=6in]{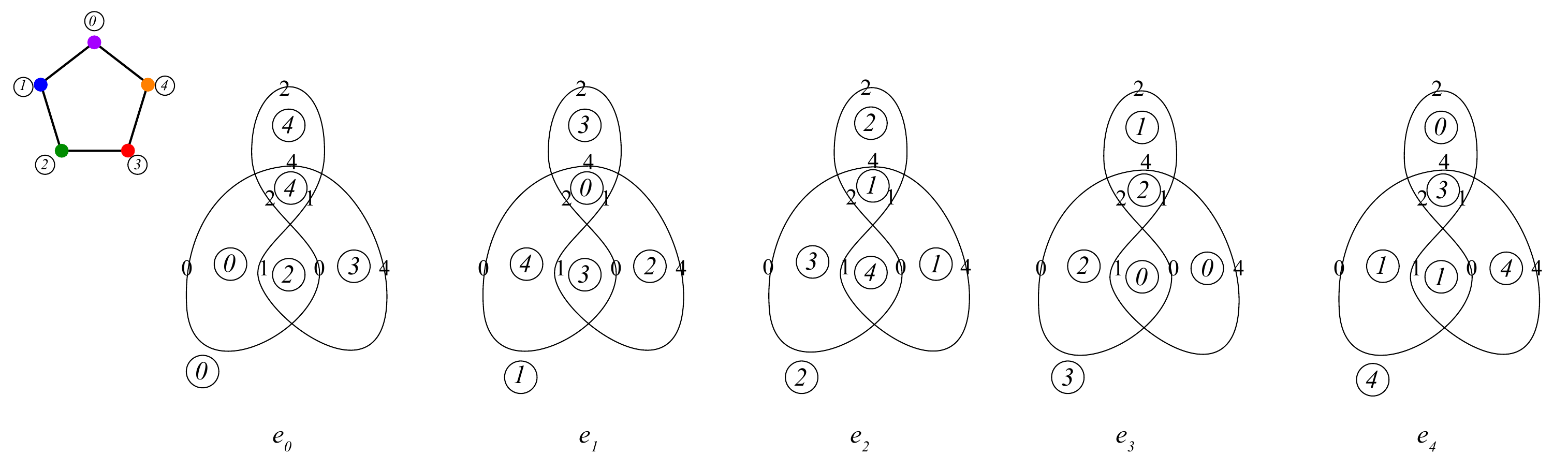}
	\caption{Position of the lifts $E^j$ of $E$ in a neighborhood of each lift $e_k$ of $e$. The position of $E^j$ is marked by \circled{j}.}
	\label{dehncoloring.fig}
\end{figure}

The other cells $a_i$ and $A_i$ above each have 5 lifts to $M_\rho$. We introduce a systematic way of labelling these cells.

Since $D_K$ has an even number of crossings, the vector field $-\partial/\partial z$ along $K$ has two lifts along each of the index-2 curves $K_1$ and $K_2$.  We choose one such lift $V^1$ along $K^1$ and one lift $V^2$ along $K^2$.  Now denote by $R^1_i$ and $L^1_i$ the two lifts of $A_i$ adjacent to $k_i^1$, with $V^1$ tangent to $R^1_i$.  Similarly denote by $R^2_i$ and $L^2_i$ the two lifts of $A_i$ adjacent to $k_i^2$, with $V^2$ tangent to $R^2_i$.  See Figure ~\ref{lifts5fold.fig}, which also serves to label the remaining lifts of the cells $A_i$ and $a_i$.  In particular, the lift of $A_i$ adjacent to $k_i^0$ is denoted $B_i$.  The five lifts of $a_i$ are $b_i$, $r_i^1$, $l_i^1$, $r_i^2$, and $l_i^2$, as shown in Figure  ~\ref{lifts5fold.fig}.
\begin{figure}
	\includegraphics[width=6.5in]{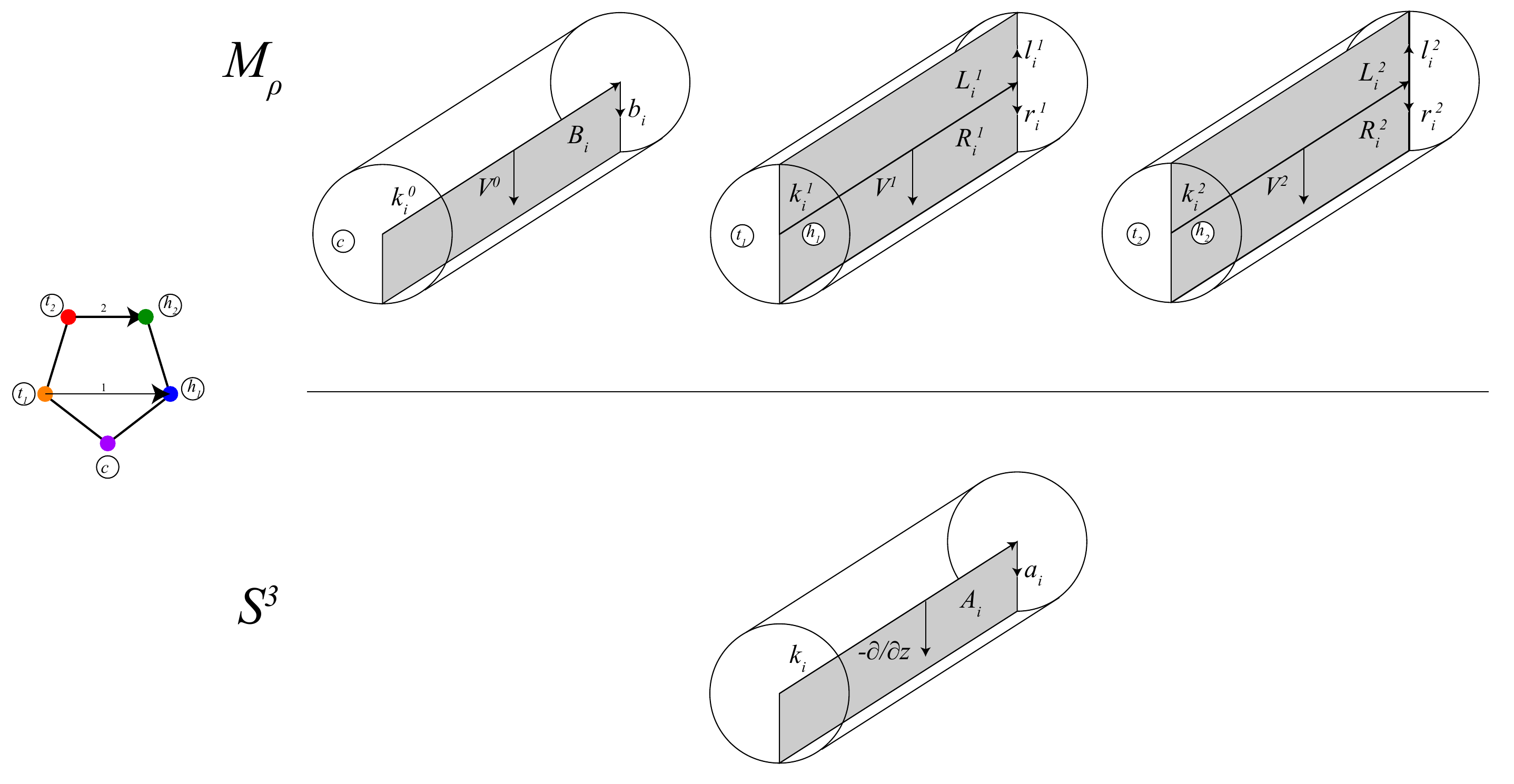}
	\caption{Lifts of a tubular neighborhood of the arc $k_i$ to the 5-fold dihedral cover.}\label{lifts5fold.fig}
	\label{lifts5fold.fig}
\end{figure}

To fill in Figure ~\ref{fig8crossinglifts.fig}, which depicts the lift of the cell structure in $U_i$ with all cells labeled, we carry out the following steps:

\begin{enumerate}
	\item Lift the 2-skeleton $C^2\cap U_i$ in a neighborhood of each crossing, and label the positions of the 3-cells $E_j$ according to the coloring.  
	\item At crossing 0, arbitrarily label one index-2 lift of $k_0$ $k_0^1$,  and label the other $k_0^2$.
	\item At crossing 0, arbitrarily label one of the 2-cells adjacent to $k_0^1$ $R_0^1$, and label the other $L_0^1$.
	\item At crossing 0, arbitrarily label one of the 2-cells adjacent to $k_0^2$ $R_0^2$, and label the other $L_0^2$.
	
\end{enumerate}
\begin{figure}
	\includegraphics[width=5.5in]{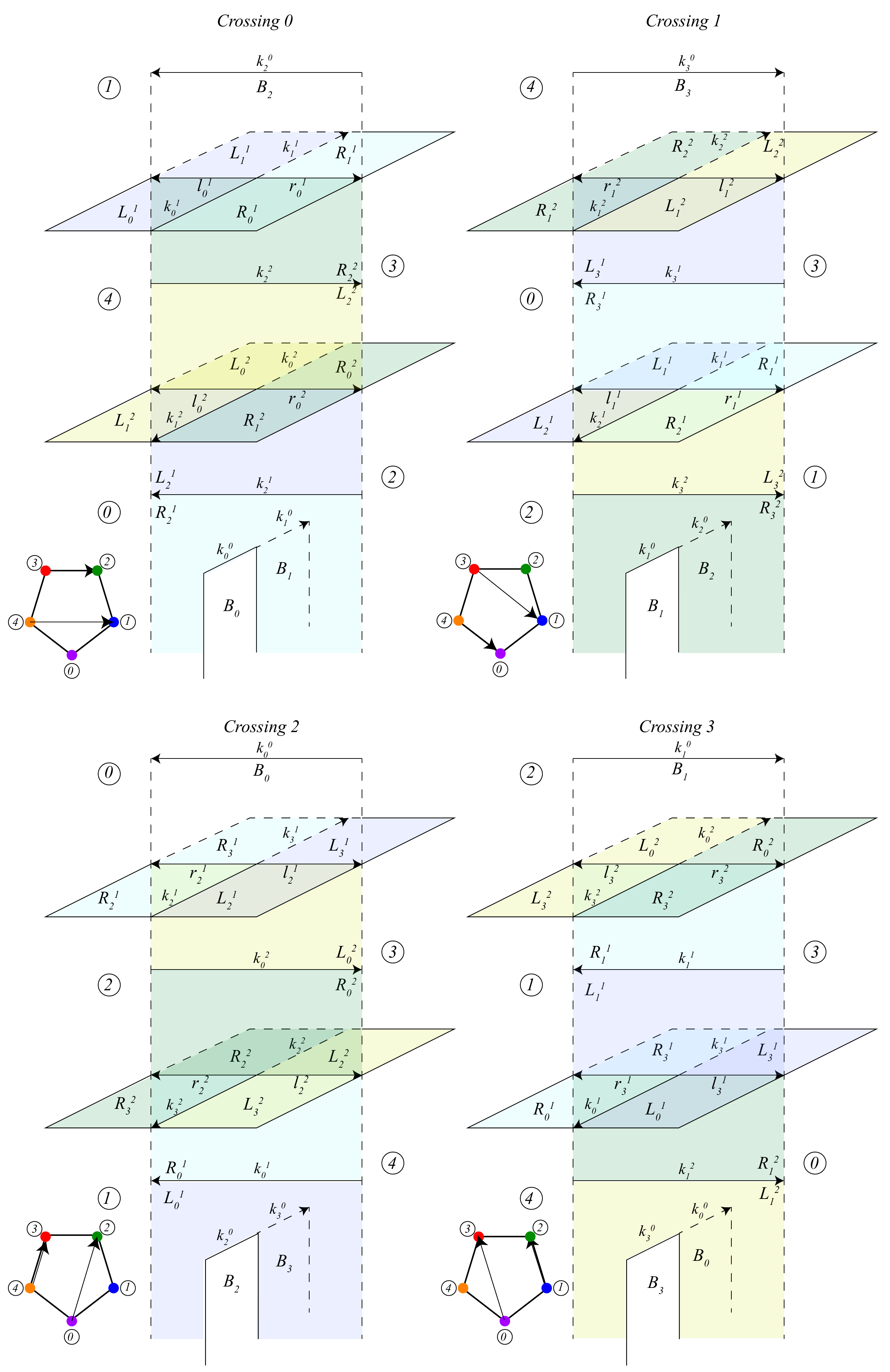}
	\caption{Cell structure on the 5-fold branched cover of $S^3$ along the Figure-8 knot.}\label{fig8crossinglifts.fig}
\end{figure}

Next we will find a two-chain $\Sigma^j$ such that $\partial K_j=\Sigma^j$ for $j=0,1,2$.  A priori, these two-chains take the following forms:

$$\Sigma^0= \sum_{i=0}^3 B_i + \sum_{i=0}^3 x_i^1(R_i^1-L_i^1)+ \sum_{i=0}^3 x_i^2(R_i^2-L_i^2)$$

$$\Sigma^1= \sum_{i=0}^3 y_i^1R_i^1+(1-y_i^1)L_i^1+\sum_{i=0}^3 y_i^2(R_i^2-L_i^2)$$

$$\Sigma^2=\sum_{i=0}^3 z_i^1(R_i^1-L_i^1)+\sum_{i=0}^3 z_i^2R_i^2+(1-z_i^2)L_i^2$$

Indeed, each $\Sigma^j$ is a linear combination of the 2-cells $B_i$, $R^k_i$ and $L^k_i$, and each $k_i^j$ must appear exactly once as a summand in $\partial \Sigma^j$.   The linear combinations take the form above if and only if:
$$\partial \Sigma^0-K^0=\sum_{i=0}^3\sum_{k=1}^2 \alpha_i^{k} (r_i^k-l_i^k),$$
$$\partial \Sigma^1-K^1=\sum_{i=0}^3\sum_{k=1}^2 \beta_i^{k} (r_i^k-l_i^k), \text{ and }$$
$$\partial \Sigma^2-K^2=\sum_{i=0}^3\sum_{k=1}^2 \gamma_i^{k} (r_i^k-l_i^k).$$
Since $\partial \Sigma^j = K^j$, we determine each $\alpha_i^{k}$,  $\beta_i^{k}$, and $\gamma_i^{k}$ and set them equal to 0. This results in three linear systems in the variables $x_i^j$, $y_i^j$, and $z_i^j$.  If the linear system corresponding to $\Sigma^j$ has a solution, it determines $\Sigma^j$ explicitly; if not, $K^j$ cannot be rationally nullhomologous.

 To compute  $\alpha_i^{k}$,  $\beta_i^{k}$, and $\gamma_i^{k}$, it suffices to look at the lift of the cell structure in $f^{-1}(U_i)$,  where $U_i$ is the neighborhood of crossing $i$ of $D_K$ described above.

To compute  $\alpha_0^{1}$,  $\beta_0^{1}$, and $\gamma_0^{1}$, we compute $\partial \Sigma^j$, but only record the boundaries of the 2-cells incident to $r_0^1$ or $l_0^1$, and only list the 1-cells which intersect $f^{-1}(U_0)$.  Referring to Figure ~\ref{fig8crossinglifts.fig} at Crossing 0, this gives :
\begin{center}
\begin{tabular}{ccc}
    $\partial\Sigma^0$ & $\partial\Sigma^1$ & $\partial\Sigma^2$ \\
   $\begin{cases}
\partial{B}_2= k^0_2-l^1_0+r^1_0+\dots\\
x^1_0 \partial R^1_0= x^1_0(k^1_0+r^1_0)+\dots \\
-x^1_0\partial L^1_0= -x^1_0(k^1_0+l^1_0)+\dots \\
x^1_1\partial R^1_1= x^1_1(k^1_1-r^1_0)+\dots \\
-x^1_1\partial L^1_1= -x^1_1(k^1_1-l^1_0)+\dots\\
x^2_2\partial R^2_2= x^2_2(k^2_2-r^1_0+l^1_0)+\dots\\
\end{cases}$&
$\begin{cases}

y^1_0\partial{R^1_0}= y^1_0(k^1_0+r^1_0)\dots \\
(1-y^1_0)\partial{L^1_0}= (1-y^1_0)(k^1_0+l^1_0)\dots \\
y^1_1\partial{R^1_1}=y^1_1(k^1_1-r^1_0)\dots \\
(1-y^1_1)\partial{L^1_1}= (1-y^1_1)(k^1_1-l^1_0)\dots \\
y^2_2\partial{R^2_2}= y^2_2(k^2_2-r^1_0+l^1_0)\dots
\end{cases} $ &
$\begin{cases}
z^1_0\partial{R^1_0}=z^1_0(k^1_0+r^1_0)\dots  \\
-z^1_0\partial{L^1_0}=-z^1_0(k^1_0+l^1_0) \dots \\
z^1_1\partial{R^1_1}=z^1_1(k^1_1-r^1_0) \dots \\
-z^1_1\partial{L^1_1}=-z^1_1(k^1_1-l^1_0) \dots \\
z_2^2\partial{R^2_2}=z_2^2(k^2_2-r^1_0+l^1_0) 
\end{cases}$
\end{tabular}
\end{center}

Therefore $$\alpha_0^{1}=x^1_0-x_1^1-x^2_2+1=0$$
$$\beta_0^{1}=y_0^1-y_1^1-y^2_2=0$$
$$\gamma_0^{1}=z_0^1-z_1^1-z^2_2=0$$

Similarly, recording the boundaries of 2-cells incident to $r_0^2$ or $l_0^2$ at Crossing 0 gives 
$$\alpha_0^2=x_0^2-x_1^2+x_2^2-x_2^1=0$$
$$\beta_0^2=y_0^2-y_1^2+y_2^2+1-y_2^1=0$$
$$\gamma_0^2=z_0^2-z_1^2-1+z_2^2-z_1^2=0$$

We can follow this process for each of the remaining crossings, generating three systems of eight inhomogeneous linear equations in eight variables, one system per 2-chain. We can then solve these systems of equations to determine values of each $x^j_i$, $y_i^j$, and $z_i^j$ in the 2-chains. These values yield a 2-chain whose boundary is the relevant lift of the knot.

\begin{center}
	\begin{tabular}{ccc}$\Sigma^0$ & $\Sigma^1$ & $\Sigma^2$\\
		$\begin{cases}
		\alpha_0^{1}=x^1_0-x_1^1-x^2_2+1=0\\
		\alpha_0^2=x_0^2-x_1^2+x_2^2-x_2^1=0\\
		\alpha_1^1=x_1^1-x_2^1+x_3^1+x_3^2=0\\
		\alpha_1^2=x_1^2-x_2^2+x_3^1+1=0\\
		\alpha_2^1=x_2^1-x_3^1-x_0^2-1=0\\
		\alpha_2^2=x_2^2-x_3^2+x_0^2-x_0^1=0\\
		\alpha_3^1=x_3^1-x_0^1+x_1^1+x_1^2=0\\
		\alpha_3^2=x_3^2-x_0^2+x_1^1-1=0
		\end{cases}$&
		$\begin{cases}\beta_0^{1}=y_0^1-y_1^1-y^2_2=0\\
		\beta_0^2=y_0^2-y_1^2+y_2^2+1-y_2^1=0\\
		\beta_1^1=y_1^1-y_2^1+y_3^1+y_3^2=0\\ 
		\beta_1^2=y_1^2-y_2^2-1+y_3^1=0\\
		\beta_2^1=y_2^1-y_3^1-y_0^2=0\\
		\beta_2^2=y_2^2-y_3^2+y_0^2-y_0^1=0\\
		\beta_3^1=y_3^1-y_0^1-1+y_1^1+y_1^2=0\\
		\beta_3^2=y_3^2-y_0^2+y_1^1=0
		\end{cases}$&
		$\begin{cases}
		\gamma_0^1=z_0^1-z_1^1-z^2_2=0\\
		\gamma_0^2=z_0^2-z_1^2-1+z_2^2-z_2^1=0\\
		\gamma_1^1=z_1^1-z_2^1+z_3^1-1+z_3^2=0\\
		\gamma_1^2=z_1^2-z_2^2+z_3^1=0\\
		\gamma_2^1=z_2^1-z_3^1+1-z_0^2=0\\
		\gamma_2^2=z_2^2-z_3^2+z_0^2-z_0^1=0\\
		\gamma_3^1=z_3^1-z_0^1+z_1^1+z_1^2=0\\
		\gamma_3^2=z_3^2-z_0^2+z_1^1=0
		\end{cases}$
	\end{tabular}
\end{center}
A choice of solution for the first system is $x_0^1=0$, $x_1^1=1$, $x_2^1=0$, and $x_3^1=-1$, and all $x_i^2=0$. The corresponding 2-chain is 
$$\Sigma^0=\sum_{i=0}^3 B_i+R_1^1-L_1^1-R_3^1+L_3^1.$$
We now compute the intersection numbers of $K^1$ and $K^2$ with $\Sigma^0$. We first isotop each $K^i$ in the complement of $K^0$ so that it meets $\Sigma^0$ transversely.  To make a consistant choice of push-off, let $h=h_i^j$ be the superscript of the 3-cell $E^h$, such that informally, if one stands on the arc $k_i^j$ facing in the direction of the orientation of $K^j$, the 2-cell $R_i^j$ is on the right; see Figure ~\ref{lifts5fold.fig}. Then push $K^j$ into the 3-cells $E^{h_i^j}$, and denote this push-off by $(K^j)'$.  

Now $$(K^1)'\cdot \Sigma^0=(K^1)'\cdot \sum_{i=0}^3 B_i+ (K^1)'\cdot (R_1^1-L_1^1-R_3^1+L_3^1)=(0+0-1+0)+(1-1-1+0)=-2$$
and
$$(K^2)'\cdot \Sigma^0=(K^2)'\cdot \sum_{i=0}^3 B_i+ (K^2)'\cdot (R_1^1-L_1^1-R_3^1+L_3^1)=(0+1+0+0)+(0
+0+0+1)=2$$
Similarly, a solution to the second system is $y_2^1=1$,  $y_3^1=1$, and all other $y_i^j=0$.  This corresponds to the 2-chain
$$\Sigma_1=L_0^1+L_1^1+R_2^1+R_3^1.$$
This gives $(K^0)'\cdot \Sigma^1=-2$ and $(K^2)'\cdot\Sigma^1=0$.

Finally, a solution to the third system is $z_2^1=-1$, and all other $z_i^j=0$. We take
$$\Sigma_2=-R_2^1+L_2^1+L_0^2+L_1^2+L_2^2+L_3^2$$

This gives $(K^0)'\cdot \Sigma^2=2$ and $(K^1)'\cdot \Sigma^2=0$.

The dihedral linking invariant of the Figure-8 is therefore

$$\text{DLN}(K,\rho)=\{\text{lk}(K^0,K^1),\text{lk}(K^0,K^2), \text{lk}(K^1,K^2)\}=\{-2,2,0\}.$$

Note that the algorithm double-checks itself, by computing both $\text{lk}(K^i,K^j)$ and $\text{lk}(K^j,K^i)$ and confirming that they agree.

\section{General Setup}\label{setup.sec}

Let $K$ be a knot and $\rho:\pi_1(S^3-K)\twoheadrightarrow D_p$ a  $p$-coloring of $K$. Let $M_\rho$ denote the corresponding $p$-fold irregular branched cover of $(S^3,K)$.  Let $D_K$ be a diagram of $K$ with an even number of crossings $n$; we can always arrange this by adding a crossing with a first Reidemeister move.

As in Section ~\ref{fig8example.sec}, we consider a cell structure on $S^3$ determined by the cone on $D_K$.

	\begin{enumerate}
		\item ``horizontal" 1-cells $k_0$, $k_1$, $\dots$, $k_{n-1}$,  which are arcs in the diagram $D_K$ of $K$;
		\item ``vertical'' 1-cells $a_0$, $a_1$, $\dots$, $a_{n-1}$, below each crossing of $D_K$;
		\item ``vertical'' 2-cells $A_0$, $A_1$, $\dots$, $A_{n-1}$, below each arc of $D_K$
		\item one 3-cell, $E$, the complement of the cone
		\item one 0-cell $e$ at the cone point
		\item  one 0-cell $e_i$ at the head of each arc $k_i$
	\end{enumerate}
Denote the $k$-skeleton by $C^k$.

Next we lift this cell structure to the dihedral cover $M_\rho$ of $S^3$, branched along $K$ determined by the $p$-coloring $\rho$ of $K$. We let $U_i$ and $V_i$ represent neighborhoods of crossing $i$ and arc $k_i$ as before; see Figure ~\ref{skeletonnbd.fig}.

Since $M_\rho$ is a $p$-fold dihedral cover of $(S^3,K)$, $K$ has one index-1 preimage $K^0$, and $q=\dfrac{p-1}{2}$ index-2 preimages $K^1$, $K^2$, $\dots$, $K^q$. Denote the corresponding lifts of each 1-cell $k_i$ by $k_i^0$, $k_i^1$, $\dots$, $k_i^q$.

  There are $p$ lifts of the 3-cell $E$, denoted $E^0\dots E^p$.  We again label them by the following rule: Let $a$ and $b$ be two points in $E$, and $g:[0,1]\rightarrow S^3$ a path from $a$ to $b$ which intersects exactly one of the two-cells $A_i$ once, transversely, where $k_i$ is colored $n$.  Denote by $\tilde{g}_m$ the lift of $g$ such that $\tilde{g}_m(0)\in E^m$.  Then $\tilde{g}_m(1)\in E^{\text{Ref}_n(m)}$.

The other cells $a_i$ and $A_i$ above each have $p$ lifts to $M_\rho$.  Let $B_i$ denote the lift of $A_i$ incident to $k_i^0$.  There are two lifts of $A_i$ incident to each $k_i^j$ for $1\leq j \leq q$, which we denote by $R_i^j$ and $L_i^j$, labeled as in Section ~\ref{fig8example.sec}. First, arbitrarily label the two lifts of $A_0$ incident to $k_0^j$ $R_0^j$ and $L_0^j$.  Then lift the vertical vector field $-\partial/\partial z$ along $K$ (tangent to $A_i$) to $M_\rho$. Because the number of crossings of $D_K$ is even, $-\partial/\partial z$ has two lifts to $M_\rho$.  Let $V^j$ denote the lift of $V$ tangent to $R_0^j$.  Then for $i\geq 1$, let $R_i^j$ be the lift of $A_i$ incident to $k_i^j$ and tangent to $V^j$, and let $L_i^j$ be the other lift of $A_i$ incident to $k_i^j$.   Finally, let $r_i^j$ denote the lift of $a_i$ on the boundary of $R_i^j$, let $l_i^j$ denote the lift of $a_i$ on the boundary of $L_i^j$, and let $b_i$ be the lift of $a_i$ on the boundary of $B_i$.  See Figure ~\ref{lifts5fold.fig} for an example when $p=5$.

It will be useful to record the positions of the 2-cells $R_i^j$ and $L_i^j$ relative to the 3-cells $E^0$, $\dots$, $E^{p-1}$.  The 2-cells $R_i^j$ and $L_i^j$ are incident to two 3-cells.  Informally, let $h=h_i^j$ denote the superscript of the 3-cell $E^h$ such that, if one stands on the arc $k_i^j$ facing in the direction of its orientation, $R_i^j$ is on the right and $L_i^j$ is on the left.  Let $t=t_i^j$ denote the superscript of the 3-cell $E^t$ such that, if one stands on the arc $k_i^j$ facing in the direction of its orientation, $L_i^j$ is on the right and $R_i^j$ is on the left.

\begin{defn} The {\it configuration diagram} for arc $i$ is a set of $q=\dfrac{p-1}{2}$ arrows $\text{arr}_i^1$, $\dots$, $\text{arr}_i^q$ between vertices $\{0,\dots,p-1\}$ of a regular $p$-gon (necessarily with disjoint endpoints), such that the head and tail of $\text{arr}_i^j$ are $h_i^j$ and $t_i^j$ $\in \{0,\dots,p-1\}$, respectively.  Note that if $c(i)$ is the color of the arc $k_i$, none of the arrows $\text{arr}_i^1$, $\dots$, $\text{arr}_i^q$ have an endpoint on vertex $c(i)$.  We let $\text{arr}_i^0$ be an arrow with head and tail $h_i^j=t_i^j=c(i)$, but omit this arrow from the diagram. See Figure ~\ref{configdiagram.fig}.
	\end{defn}
	
	 Observe that the configuration diagram for arc $i+1$ is obtained from the configuration diagram for arc $i$ by a reflection over the vertex $c(o(i))$, where $s=o(i)$ is the subscript of the arc $k_s$ passing over $k_i$ at its head.  The configuration diagrams for each crossing of the Figure-8 knot are shown in Figure ~\ref{fig8crossinglifts.fig}, together with the lifts of the relevant 2-cells.
 \begin{figure}[htbp]  
\includegraphics[width=5in]{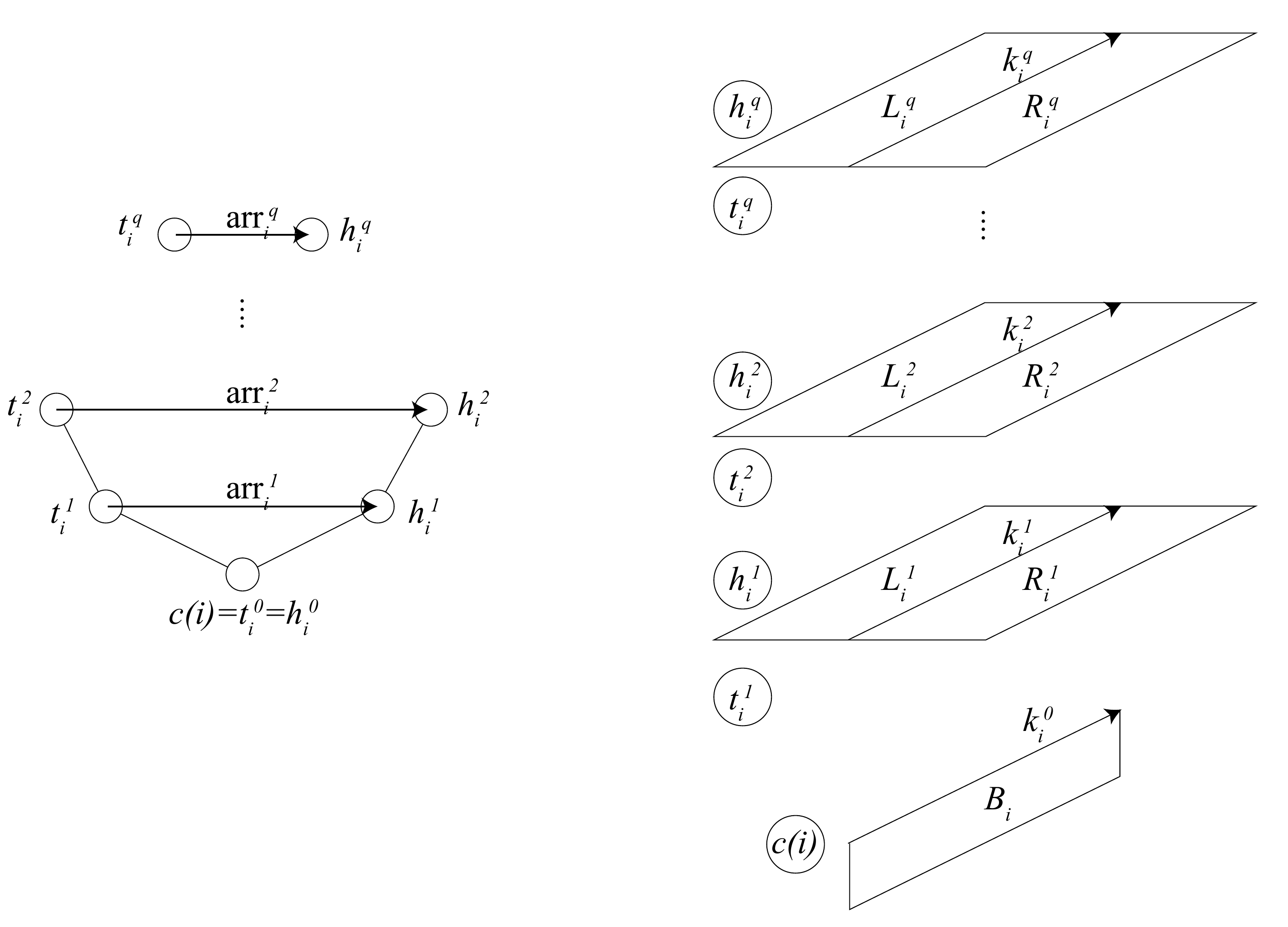}
\caption{A possible configuration diagram for the $i^{th}$ arc $k_i$ of $D_K$.}
\label{configdiagram.fig}
\end{figure}

\section{Constructing 2-chains bounded by the singular set}\label{2chain.sec}

In this section, we construct a rational 2-chain $\Sigma^j$ bounding each $K^j$, for $j\in\{0,\dots,q\}$ or determine no such 2-chain exists.  A priori, these 2-chains take the following forms:

$$\Sigma^0= \sum_{i=0}^{n-1} B_i + \sum_{j=1}^q \sum_{i=0}^{n-1} x_i^{0,j}(R_i^j-L_i^j)$$

$$\Sigma^k=  \sum_{i=0}^{n-1}x_i^{k,k}R_i^k+(1-x_i^{k,k})L_i^k+ \sum_{j\in\{0,\dots,q\}\setminus\{k\} }\quad \sum_{i=0}^{n-1}x_i^{k,j}(R_i^j-L_i^j)$$

Now for all $k\in\{0,\dots,q\}$,

$$\partial\Sigma^k-K^k=\sum_{j=1}^q \sum_{i=0}^{n-1} \alpha_i^{k,j} (r_i^j-l_i^j).$$

We compute each $\alpha_i^{k.j}$ using the configuration diagram at crossing $i$ and set the result equal to 0, giving us a system of equations for which a solution determines the 2-chain $\Sigma^k$.  

Figure ~\ref{rijlijcases.fig} shows the 8 possible configurations of 2-cells incident to $r_i^j-l_i^j$ when the local writhe number of the crossing is positive. We rotate the picture so that $R_i^j$ is to the right of $k_i^j$ and $L_i^j$ is to the left of $k_i^j$. Informally, let $a(i,j)$ and $b(i,j)\in\{0,1,\dots, q\}$ be the index-2 lifts of the over arc $k_{o(i)}$ which sit above and below $r_i^j-l_i^j$ in the picture, respectively.  Note that if $a(i,j)=0$, $B_{o(i)}$ is incident to $r_i^j-l_i^j$ from above, and if $b(i,j)=0$, $B_{o(i)}$ is incident to $r_i^j-l_i^j$ from below. A formal definition, which also works for the case $j=0$, is as follows.

\begin{defn} Let $i\in\{0,\dots,n-1\}$ and let $j\in\{0,\dots,q\}$ where  $q=\dfrac{p-1}{2}$.  Then set
	
	\[ a(i,j)=
	s \text{ if } h_i^j=h_{o(i)}^s \text{ or } h_i^j=t_{o(i)}^s,	\]
	and set
	\[ b(i,j)=
	s \text{ if } t_i^j=h_{o(i)}^s \text{ or } t_i^j=t_{o(i)}^s.	\]
\label{abdefinition.def}	
\end{defn}

\begin{ex}We can read off the values of $a(i,j)$ and $b(i,j)$ from the configuration diagrams at crossings $i$ and $o(i)$. First observe from Figure ~\ref{cone.fig} that
	
	$$o(0)=2,o(1)=3, o(2)=0, \text{ and } o(3)=1.$$
	
	Next, observe $a(i,j)$ is the arrow in configuration diagram $o(i)$ such that its head or tail is incident to $h_i^j$; that is, either $h_i^j=h_{o(i)}^{a(i,j)}$ or $h_i^j=t_{o(i)}^{a(i,j)}$.  Similarly  $b(i,j)$ is the arrow in configuration diagram $o(i)$ such that its head or tail is incident to $t_i^j$; that is, either $t_i^j=h_{o(i)}^{a(i,j)}$ or $t_i^j=t_{o(i)}^{a(i,j)}$.
	
	Since $o(0)=2$, we can read off each $a(0,j)$ and $b(0,j)$ from the first and third diagrams in Figure \ref{configfig8.fig}.  For example, $h_0^1=1=h_2^0=t_2^0$, and $t_0^1= 4=t_2^2$, so $a(0,1)=0$ and $b(0,1)=2$.  Similarly, $h_0^2= 2=h_2^1$ and $t_0^2=3=h_2^2$, so $a(0,2)=1$ and $b(0,2)=2$. We can read off each $a(1,j)$ and $b(1,j)$ from the first and third diagrams, and so on.
	
	The complete set of values of $a(i,j)$ and $b(i,j)$ are below.
	\begin{center}
	\begin{tabular}{|c|c|c|c|}
	\hline
	$a(i,j)$&$j=0$&$j=1$&$j=2$\\ \hline
	$i=0$&$1$&$0$&$1$ \\ \hline
	$i=1$&$2$&$2$&$1$ \\ \hline
	$i=2$&$1$&$2$&$2$ \\ \hline
	$i=3$&$2$&$1$&$0$ \\ \hline
	\end{tabular} \hspace{1in}	\begin{tabular}{|c|c|c|c|}
	\hline
	$b(i,j)$&$j=0$&$j=1$&$j=2$\\ \hline
	$i=0$&$1$&$2$&$2$ \\ \hline
	$i=1$&$2$&$1$&$0$ \\ \hline
	$i=2$&$1$&$0$&$1$ \\ \hline
	$i=3$&$2$&$2$&$1$ \\ \hline
	\end{tabular}
	\end{center}
\label{abfunctionsexample.ex}	
\end{ex}
\begin{figure}
	\includegraphics[width=\textwidth]{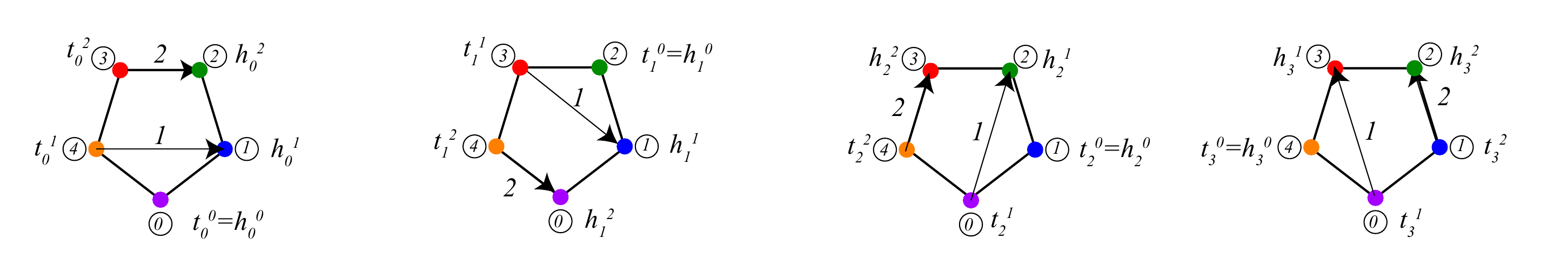}
	\caption{The configuration diagrams for the Figure-8 knot.}
	\label{configfig8.fig}
\end{figure}
We record these configurations combinatorially using two $\{0,1,-1\}$-valued functions, $\epsilon_a$ and $\epsilon_b$. Together with the local writhe number $\epsilon(i)$ at crossing $i$, $\epsilon_a$ and $\epsilon_b$ determine the coefficients of the variables $x_{o(i)}^{a(i)}$ and $x_{o(i)}^{b(i)}$ in $\alpha_i^{k,j}$.

\begin{defn} Let $i\in \{0,\dots,n-1\}$, $j\in\{0,\dots,q\}$ with $a(i,j)$, $b(i,j)$, and $o(i)$ as defined above.  Then set
	\[ \epsilon_a(i,j)=\begin{cases} 
	      1 & h_i^j= h_{o(i)}^{a(i,j)} \text{ and } a(i,j)\neq 0\\
	       -1 & h_i^j= t_{o(i)}^{a(i,j)} \text{ and } a(i,j)\neq 0\\
		   0 & a(i,j)=0
	   \end{cases}
	\]
	
	and
	\[ \epsilon_b(i,j)=\begin{cases} 
	1 & t_i^j= t_{o(i)}^{b(i,j)} \text{ and } b(i,j)\neq 0\\
	       -1 & t_i^j= h_{o(i)}^{b(i,j)} \text{ and } b(i,j)\neq 0\\
	       
		   0 & b(i,j)=0
	   \end{cases}
	\]
	Note that if $j=0$, $a(i,j)=b(i,j)$, so $\epsilon_a(i,j)=\epsilon_b(i,j)$.
	\label{abfunctions.def}
\end{defn}

\begin{ex} We list the $\epsilon_a(i,j)$ and $\epsilon_b(i,j)$ for  the Figure-8 in Section ~\ref{fig8example.sec} in the table below.
	\begin{center}
\begin{tabular}{|c|c|c|c|}
\hline
$\epsilon_a(i,j)$&$j=0$&$j=1$&$j=2$\\ \hline
$i=0$&$-1$&$0$&$1$ \\ \hline
$i=1$&$1$&$-1$&$-1$ \\ \hline
$i=2$&$1$&$1$&$-1$ \\ \hline
$i=3$&$-1$&$-1$&$0$ \\ \hline
\end{tabular}\hspace{1in}
\begin{tabular}{|c|c|c|c|}
\hline
$\epsilon_b(i,j)$&$j=0$&$j=1$&$j=2$\\ \hline
$i=0$&$1$&$1$&$-1$ \\ \hline
$i=1$&$-1$&$-1$&$0$ \\ \hline
$i=2$&$-1$&$0$&$1$ \\ \hline
$i=3$&$1$&$-1$&$-1$ \\ \hline
\end{tabular}
\end{center}
\end{ex}

By considering the 8 possible configurations of cells in Figures ~\ref{rijlijcases.fig} and ~\ref{rijlijcasesneg.fig}, we see that $\alpha_i^{k,j}=x_i^j-x_{i+1}^{j}-\epsilon_a(i,j)x_{o(i)}^{k,a(i,j)}-\epsilon_b(i,j)x_{o(i)}^{k,b(i,j)}+C(i,j,k)$, where $C(i,j,k)$ is a constant.  This constant $C(i,j,k)$ is the coefficient of $r_i^j-l_i^j$ in the boundary of $\sum_{i=0}^{n-1}B_i$ when $k=0$, and in the boundary of $\sum_{i=0}^{n-1} L_i^k$ when $k\in \{1,\dots,q\}$.

We write $C(i,j,k)=C_a(i,j,k)+C_b(i,j,k)$ with $C_a$ and $C_b$ defined as follows.  Note that $C_a$ is nonzero precisely when $L_i^k$ (when $k\neq 0$) or $B_i$ (when $k=0$) is incident to $r_i^j-l_i^j$ from above.  Similarly, $C_b$ is nonzero when $L_i^k$ (when $k\neq 0$) or $B_i$ (when $k=0$) is incident to $r_i^j-l_i^j$ from below.

\begin{defn} Let $i\in \{0,\dots,n-1\}$, $j\in\{1,\dots,q\}$, and $k\in \{0,\dots,q\}$.  Then set
	\[ C_a(i,j,k)=\begin{cases} 
	      -\epsilon(i) & a(i,j)=k, k\neq 0, \text{ and } \epsilon(i)\epsilon_a(i,j)=-1\\
		  -\epsilon(i) & a(i,j)=k \text{ and } k=0\\
 	       
		   0 &  \text{else}
	   \end{cases}
	\]
	\[ C_b(i,j,k)=\begin{cases} 
	      \epsilon(i) & b(i,j)=k, k\neq 0,  \text{ and } \epsilon(i)\epsilon_b(i,j)=1\\
		  \epsilon(i) & b(i,j)=k\text{ and } k=0\\
 	       
		   0 & \text{else}
	   \end{cases}
	\]
	\label{abconstants.def}
\end{defn}

\begin{ex}
	The values of $C_a$ and $C_b$ for the Figure-8 knot in Section ~\ref{fig8example.sec} are as follows:
	\begin{center}
	\begin{tabular}{|c|c|c|c|}
	\hline
	$C_a(i,j,0)$&$j=0$&$j=1$&$j=2$\\ \hline
	$i=0$&$0$&$1$&$0$ \\ \hline
	$i=1$&$0$&$0$&$0$ \\ \hline
	$i=2$&$0$&$0$&$0$ \\ \hline
	$i=3$&$0$&$0$&$-1$ \\ \hline
	\end{tabular}\hspace{1in}\begin{tabular}{|c|c|c|c|}
\hline
$C_a(i,j,1)$&$j=0$&$j=1$&$j=2$\\ \hline
$i=0$&$0$&$0$&$1$ \\ \hline
$i=1$&$0$&$0$&$-1$ \\ \hline
$i=2$&$1$&$0$&$0$ \\ \hline
$i=3$&$0$&$-1$&$0$ \\ \hline
\end{tabular}

\begin{tabular}{|c|c|c|c|}
\hline
$C_a(i,j,2)$&$j=0$&$j=1$&$j=2$\\ \hline
$i=0$&$0$&$0$&$0$ \\ \hline
$i=1$&$0$&$-1$&$0$ \\ \hline
$i=2$&$0$&$1$&$0$ \\ \hline
$i=3$&$-1$&$0$&$0$ \\ \hline
\end{tabular}

\begin{tabular}{|c|c|c|c|}
\hline
$C_b(i,j,0)$&$j=0$&$j=1$&$j=2$\\ \hline
$i=0$&$0$&$0$&$0$ \\ \hline
$i=1$&$0$&$0$&$1$ \\ \hline
$i=2$&$0$&$-1$&$0$ \\ \hline
$i=3$&$0$&$0$&$0$ \\ \hline
\end{tabular}\hspace{1in}
\begin{tabular}{|c|c|c|c|}
\hline
$C_b(i,j,1)$&$j=0$&$j=1$&$j=2$\\ \hline
$i=0$&$0$&$0$&$0$ \\ \hline
$i=1$&$0$&$0$&$0$ \\ \hline
$i=2$&$-1$&$0$&$0$ \\ \hline
$i=3$&$0$&$0$&$0$ \\ \hline
\end{tabular}

\begin{tabular}{|c|c|c|c|}
\hline
$C_b(i,j,2)$&$j=0$&$j=1$&$j=2$\\ \hline
$i=0$&$0$&$0$&$-1$ \\ \hline
$i=1$&$0$&$0$&$0$ \\ \hline
$i=2$&$0$&$0$&$0$ \\ \hline
$i=3$&$1$&$0$&$0$ \\ \hline
\end{tabular}
\end{center}

\end{ex}

\begin{ex} We can now compute $\alpha_0^{0,1}$ for the Figure-8 knot in Section ~\ref{fig8example.sec} (where it was denoted $\alpha_0^1$) directly from the combinatorial formula $\alpha_i^{k,j}=x_i^{k,j}-x_{i+1}^{k,j}-\epsilon_a(i,j)x_{o(i)}^{k,a(i,j)}-\epsilon_b(i,j)x_{o(i)}^{k,b(i,j)}+C(i,j,k)$.
	
Set $i=0$, $j=1$, and $k=0$.  Clearly $x_i^{k,j}-x_{i+1}^{k,j}=x_0^{0,1}-x_1^{0,1}$.  

Recall that $o(0)=2$.

We found $a(0,1)=0$ and $b(0,1)=2$ in Example ~\ref{abfunctionsexample.ex}.

Next, $\epsilon_a(0,1)=0$ since $a(0,1)=0$ by Definition ~\ref{abfunctions.def}. Also using Definition ~\ref{abfunctions.def}, and by considering Figure ~\ref{configfig8.fig}, $\epsilon_b(0,1)=1$ since $t_0^1=4=t_2^2$ .

Since crossing 0 has negative local writhe number, $\epsilon(i)=-1$.

Now $C_a(0,1,0)=-(-1)=1$ by Definition ~\ref{abconstants.def}, since $a(0,1)=0=k$.  Similarly, $C_b(0,1,0)=0$ since $0\neq b(0,1)$.

Therefore $\alpha_0^{0,1}=x_0^1-x_1^1-x_{2}^{0,2}+1,$ which agrees with the direct computation in Section~\ref{fig8example.sec}.

\end{ex}

\begin{thm}  Let $q=\dfrac{p-1}{2}$ with $p$ odd.  Let $\rho:\pi_1(S^3\rightarrow K)$ be a $p$-coloring of $K$, with corresponding irregular $p$-fold dihedral cover $f:M_\rho\rightarrow S^3$.  Let $K^0$ be the index-1 connected component of $f^{-1}(K)$ and let $K^1,K^2,\dots, K^q$ be the index-2 connected components of $f^{-1}(K)$.  Then $K^k$ is rationally null-homologous in $M_\rho$ if and only if the system of equations
	
	$$x_i^{k,j}-x_{i+1}^{k,j}-\epsilon_a(i,j)x_{o(i)}^{k,a(i,j)}-\epsilon_b(i,j)x_{o(i)}^{k,b(i,j)}+C_a(i,j,k)+C_b(i,j,k)=0$$
	
	has a solution $(x_0^{k,j}, \dots, x_{n-1}^{k,j})_{j=1}^q \in \mathbb{Q}^{nq}$.

	  Furthermore, if $k=0$, the index-1 lift $K^0$ of $K$ bounds the 2-chain
	$$\Sigma^0= \sum_{i=0}^{n-1} B_i + \sum_{j=1}^q \sum_{i=0}^{n-1} x_i^{0,j}(R_i^j-L_i^j),$$
	
	and if $k\neq 0$, 
    the index-2 lift $K^k$ of $K$ bounds the 2-chain 
   	$$\Sigma^k=  \sum_{i=0}^{n-1}x_i^{k,k}R_i^k+(1-x_i^{k,k})L_i^k+ \sum_{j\in\{0,\dots,q\}\setminus\{k\} }\quad \sum_{i=0}^{n-1}x_i^{k,j}(R_i^j-L_i^j).$$
	
	\label{chains.thm}
\end{thm}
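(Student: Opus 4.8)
The plan is to read the theorem as a bookkeeping consequence of the cellular structure on $M_\rho$ built above, together with the local boundary computations encoded in the configuration diagrams. I work in the rational cellular chain complex $C_\bullet(\tilde{C};\mathbb{Q})$, where $\tilde{C}$ is the lift to $M_\rho$ of the cone cell structure on $S^3$. The knot lift $K^k$ is the cellular $1$-cycle $\sum_{i=0}^{n-1} k_i^k$ (with $k_i^0$ understood when $k=0$), and since this complex computes $H_1(M_\rho;\mathbb{Q})$, the curve $K^k$ is rationally null-homologous exactly when $K^k=\partial\Sigma$ for some rational $2$-chain $\Sigma\in C_2(\tilde{C};\mathbb{Q})$. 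So it suffices to show that (i) any such $\Sigma$ can be taken in the stated a priori form, and then forces $(x_i^{k,j})$ to solve the displayed system, and (ii) conversely every solution yields such a $\Sigma$ of that form.

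First I would pin down the a priori form. The only $2$-cells of $\tilde{C}$ are the lifts of the $A_i$, namely $B_i$ together with $R_i^j, L_i^j$ for $1\le j\le q$, so any $\Sigma$ is a rational combination of these. The crucial structural fact is that $A_i$ is the cone on the single arc $k_i$, so $k_i$ is the only horizontal $1$-cell appearing in $\partial A_i$; hence in $\partial\Sigma$ the coefficient of $k_i^0$ is just the $B_i$-coefficient of $\Sigma$, and for $1\le\ell\le q$ the coefficient of $k_i^\ell$ is the sum of the $R_i^\ell$- and $L_i^\ell$-coefficients. Setting these equal to the coefficients of $K^k$ (which are $\delta_{\ell k}$) forces: every $B_i$ occurs with coefficient $\delta_{0k}$; for each $\ell\ne k$ the $L_i^\ell$-coefficient is $-x_i^{k,\ell}$ where $x_i^{k,\ell}$ is the $R_i^\ell$-coefficient; and, when $k\ne 0$, the $L_i^k$-coefficient is $1-x_i^{k,k}$. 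This is precisely $\Sigma^0$ when $k=0$ and $\Sigma^k$ when $k\ne 0$ (the $j=0$ summand of $\Sigma^k$ being vacuous, since $R_i^0$ and $L_i^0$ both denote the unique lift $B_i$ of $A_i$ incident to $k_i^0$, and $x_i^{k,0}$ never enters any equation because $\epsilon_a(i,j)=0$ when $a(i,j)=0$ and $\epsilon_b(i,j)=0$ when $b(i,j)=0$). Conversely any chain of this form has horizontal boundary exactly $K^k$.

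Next, with $\Sigma$ in a priori form, $\partial\Sigma-K^k$ is supported on the vertical $1$-cells $b_i, r_i^j, l_i^j$, and I would invoke the local analysis already carried out: computing the boundaries of the lifts of $A_i$ and $A_{o(i)}$ that are incident to $r_i^j-l_i^j$, read off the configuration diagrams at crossings $i$ and $o(i)$, one finds that the $b_i$-coefficient of $\partial\Sigma^k-K^k$ vanishes identically and that the $r_i^j$- and $l_i^j$-coefficients are negatives of one another, equal to
\[
\alpha_i^{k,j}=x_i^{k,j}-x_{i+1}^{k,j}-\epsilon_a(i,j)\,x_{o(i)}^{k,a(i,j)}-\epsilon_b(i,j)\,x_{o(i)}^{k,b(i,j)}+C_a(i,j,k)+C_b(i,j,k),
\]
with $C_a+C_b$ recording the contribution of $\sum_i B_i$ (if $k=0$) or of $\sum_i L_i^k$ (if $k\ne 0$). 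Hence $\partial\Sigma=K^k$ if and only if $\alpha_i^{k,j}=0$ for all $0\le i\le n-1$ and $1\le j\le q$, i.e.\ exactly when $(x_i^{k,j})_{i,j}$ solves the displayed system, and in that case $\Sigma$ equals $\Sigma^0$ or $\Sigma^k$ as stated. Combined with the first paragraph, this gives both the equivalence and the explicit $2$-chains.

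The hard part is the local input used in the previous paragraph: to justify the uniform formula for $\alpha_i^{k,j}$ one must go through each of the $8$ configurations of $2$-cells incident to $r_i^j-l_i^j$ at a positive crossing (Figure~\ref{rijlijcases.fig}) and at a negative crossing (Figure~\ref{rijlijcasesneg.fig}), in every case determining which lift of $A_{o(i)}$ lies above and below $r_i^j-l_i^j$ — this is precisely what $a(i,j)$ and $b(i,j)$ name — and with which sign, governed by the local writhe $\epsilon(i)$ and by the matching of the configuration diagrams at $i$ and $o(i)$ (encoded by $\epsilon_a,\epsilon_b$); one also has to track separately the contribution of $B_{o(i)}$ or $L_{o(i)}^k$ to obtain $C_a,C_b$, and to check the degenerate cases $a(i,j)=0$ or $b(i,j)=0$ as well as the symmetry $\epsilon_a=\epsilon_b$, $C_a=C_b$ when $j=0$. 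Since those local identities are established before the statement, the theorem itself is then the straightforward global assembly indicated above.
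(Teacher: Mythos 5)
Your proposal is correct and follows essentially the same route as the paper: reduce to cellular rational $2$-chains, derive the a priori form of $\Sigma^k$ from the coefficients of the horizontal $1$-cells $k_i^\ell$ in $\partial\Sigma$, and then identify the coefficient $\alpha_i^{k,j}$ of $r_i^j-l_i^j$ with the stated formula via the case check over the $16$ local configurations in Figures~\ref{rijlijcases.fig} and~\ref{rijlijcasesneg.fig}. Your first two paragraphs spell out the global bookkeeping (in particular the ``only if'' direction) that the paper leaves implicit, but the substantive content---the configuration-by-configuration verification that the vertical $2$-cells contribute $-\epsilon_a$, $-\epsilon_b$ and that the extra cell $B_{o(i)}$ or $L_{o(i)}^k$ contributes $C_a+C_b$---is the same in both.
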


\begin{proof} We systematically check each possible configuration of 2-cells incident to $r_i^j-l_i^j$, and compute the total number of times $r_i^j-l_i^j$ appears in the boundary of 
	$$\Sigma^k=\sum_{i=0}^{n-1}x_i^{k,k}R_i^k+(1-x_i^{k,k})L_i^k+ \sum_{j\in\{0,\dots,q\}\setminus\{k\} }\quad \sum_{i=0}^{n-1}x_i^{k,j}(R_i^j-L_i^j)$$ if $k\neq 0,$ 
	$$\Sigma^0= \sum_{i=0}^{n-1} B_i + \sum_{j=1}^q \sum_{i=0}^{n-1} x_i^{0,j}(R_i^j-L_i^j),$$ if $k=0.$
	
	There are 16 possible configurations.  The 8 configurations corresponding to a crossing with positive local writhe number are shown in Figure ~\ref{rijlijcases.fig}, and the 8 configurations corresponding to a crossing with negative local writhe number are shown in Figure ~\ref{rijlijcasesneg.fig}.  In each case, the coefficients of $x_i^{k,j}$, $x_{i+1}^{k,j}$, $x_{o(i)}^{k,a(i,j)}$, and $x_{o(i)}^{k,b(i,j)}$ in $\alpha_i^{k,j}$ are shown, as are the values of $\epsilon_a$ and $\epsilon_b$.  We observe that the coefficients of $x_i^{k,j}$ and $x_{i+1}^{k,j}$ coming from the boundaries of the ``horizontal" 2-cells $\partial (x_i^j(R_i^j-L_i^j)+x_{i+1}^j(R_{i+1}^j-L_{i+1}^j))$ are 1 and -1 respectively.  We then check that the coefficients of  $x_{o(i)}^{k,a(i,j)}$ and $x_{o(i)}^{k,b(i,j)}$ coming from the boundaries of the two ``vertical" 2-cells are $-\epsilon_a$ and $-\epsilon_b$, respectively.  There are either 0 or 1 additional 2-cells incident to $r_i^j-l_i^j$, depending on the value of $k$.  This contributes the constant term $C=C_a+C_b$ to $\alpha_i^{k,j}$, where $C=0$ or $\pm 1$.  If $k=0$, then $C=\pm 1$ if and only if $B_i$ is incident to $r_i^j-l_i^j$.  If $k\neq 0$, then $C=\pm 1$ if and only if $a(i,j)=k$ or $b(i,j)=k$.  The sign depends on the local writhe number of the crossing.  In each of the configurations in Figures ~\ref{rijlijcases.fig} and ~\ref{rijlijcasesneg.fig}, we verify that $C=C_a(i,j,k)+C_b(i,j,k)$ is the coefficient of $r_i^j-l_i^j$ in the relevant additional 2-cell, completing the proof.

\end{proof}

\begin{figure}[htbp]
\includegraphics[width=5in]{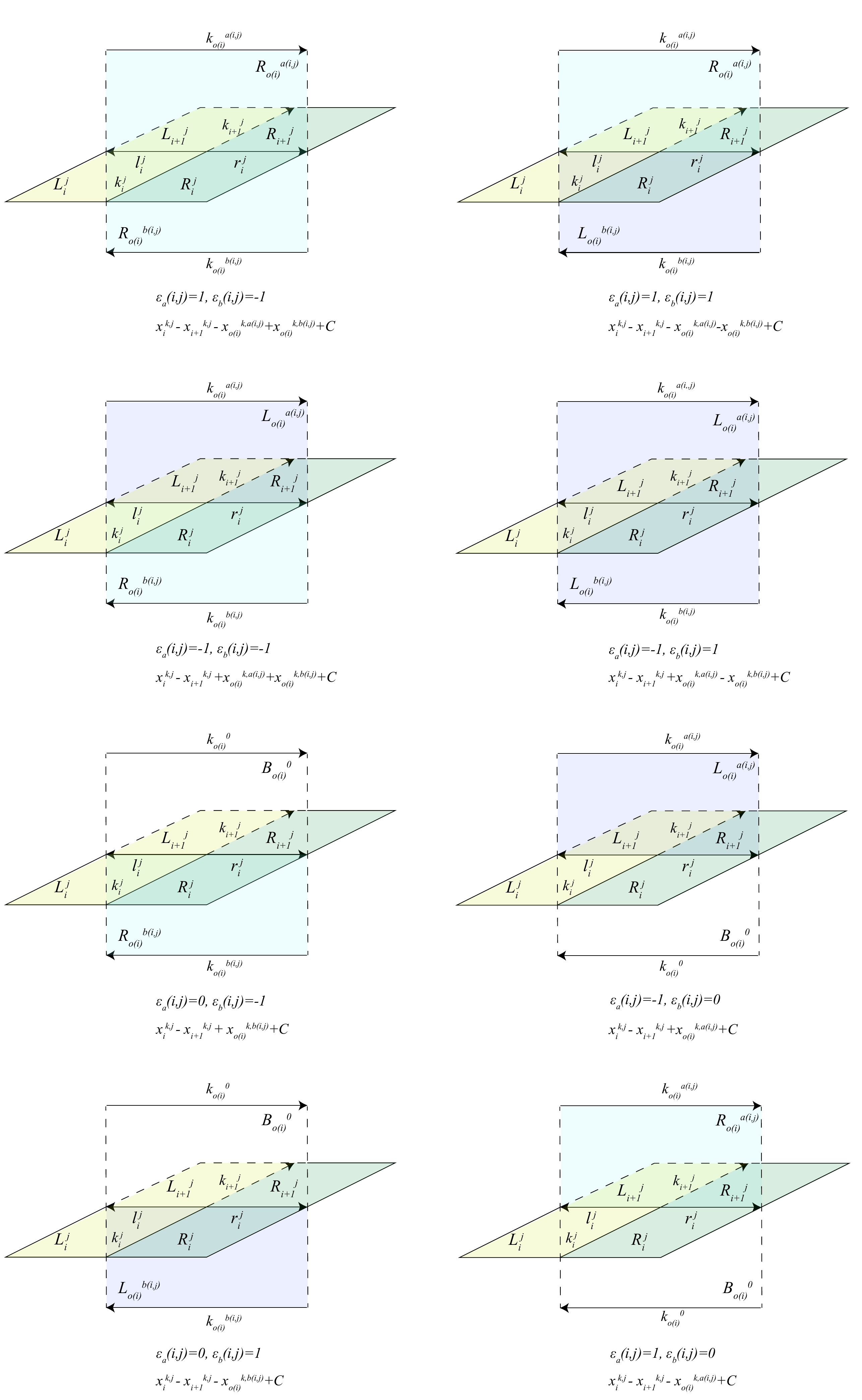}
\caption{Configurations of cells incident to $r_i^j-l_i^j$ at a positive crossing.   }
\label{rijlijcases.fig}
\end{figure}
\begin{figure}[htbp]
\includegraphics[width=5in]{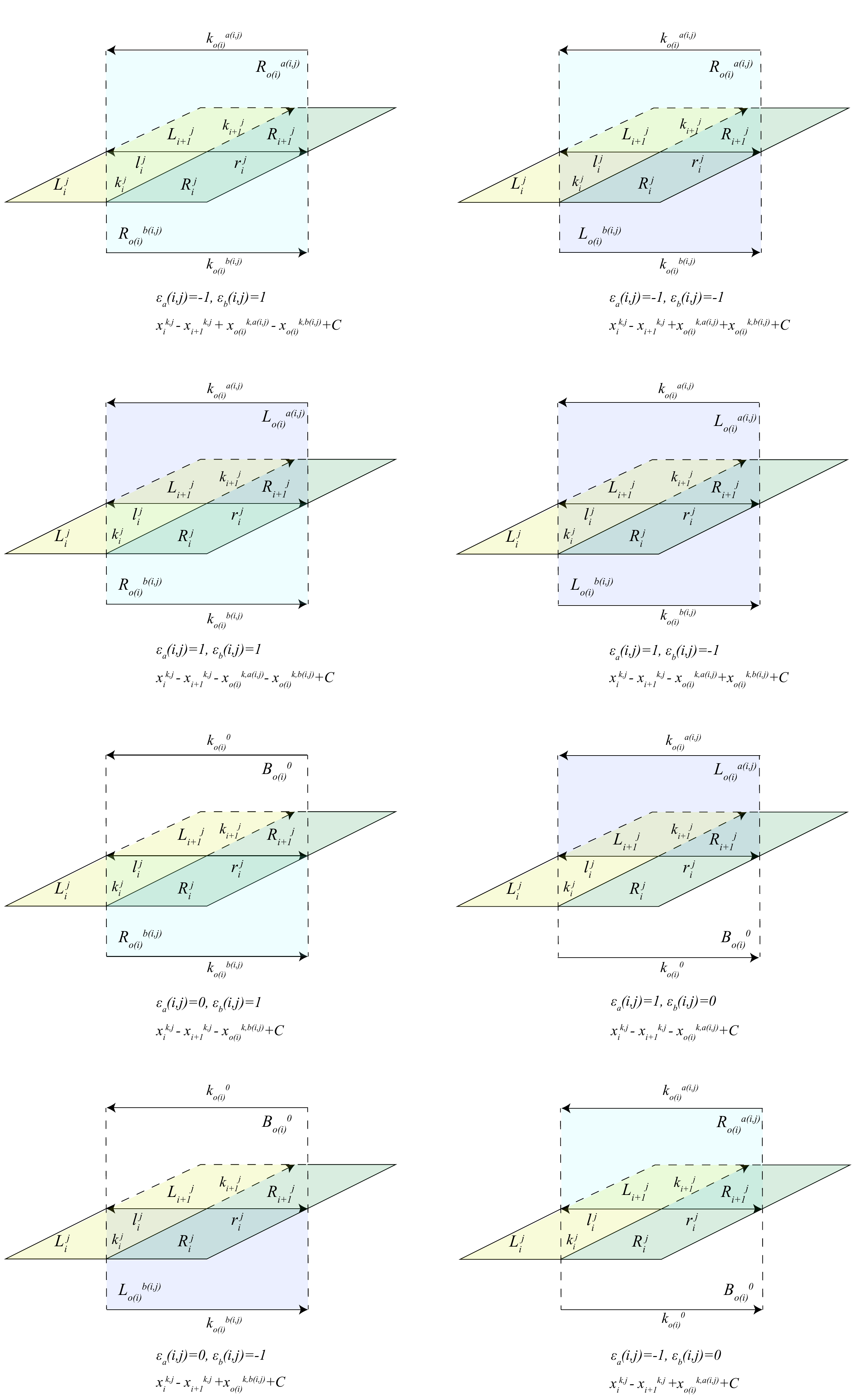}
\caption{Configurations of cells incident to $r_i^j-l_i^j$ at a negative crossing. }
\label{rijlijcasesneg.fig}
\end{figure}

\section{Computing Linking Numbers}\label{intersectionnumber.sec}

The linking number of $K^j$ and $K^k$ is the algebraic intersection number of $K^j$ with a rational 2-chain bounding $K^k$.  Theorem ~\ref{chains.thm} determines when such a 2-chain exists and, when it does, describes one such 2-chain $\Sigma^k$ explicitly.  Now in Theorem ~\ref{linking.thm}, we give a formula for the intersection number of $K^j$ with $\Sigma^k$.  By iterating over all $j,k\in \{0,\dots,q\}$, Theorem ~\ref{linking.thm} allows us to compute the dihedral linking invariant of $(K,\rho)$.  In the statement of the theorem, we allow $j=k$, in which case the result can be interpreted as a self-linking number of $K^j$ with respect to a lift of the blackboard framing of the diagram $D_K$ of $K$ that determined the cell structure on $S^3$.

\begin{thm} Let $q=\dfrac{p-1}{2}$ with $p$ odd.  Let $\rho:\pi_1(S^3\rightarrow K)$ be a $p$-coloring of $K$, with corresponding irregular $p$-fold dihedral cover $f:M_\rho\rightarrow S^3$. Suppose that $K^j$ and $K^k$, $j, k\in\{0,\dots, q\}$, are rationally null-homologous connected components of the singular set $f^{-1}(K)$.  Let $\Sigma^k$ be the rational 2-chain given in Theorem ~\ref{chains.thm}.  Then the linking number of $K^j$ with $K^k$ is 
	$$\text{lk}(K^j,K^k)=\sum_{i=0}^{n-1} \text{Int}(i,j,k) $$
	where $\text{Int}(i,j,k)=\epsilon_a(i,j)x_{o(i)}^{k,a(i,j)}-C_a(i,j,k).$
	\label{linking.thm}
\end{thm}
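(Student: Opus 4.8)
The argument starts from the definition of linking number: since $\partial\Sigma^k=K^k$ by Theorem~\ref{chains.thm}, we have $\text{lk}(K^j,K^k)=(K^j)'\cdot\Sigma^k$ for any push-off $(K^j)'$ of $K^j$ that is transverse to $\Sigma^k$ and disjoint from $K^k$ (and when $j=k$ the same number is the self-linking of $K^j$ relative to the chosen framing). I would take the specific push-off that carries $K^j$, over each arc $k_i^j$, into the interior of the $3$-cell $E^{h_i^j}$, exactly as in Section~\ref{fig8example.sec}. So the whole statement reduces to computing the algebraic intersection number $(K^j)'\cdot\Sigma^k$ with the explicit $2$-chain of Theorem~\ref{chains.thm}.

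The first step is to localize. Over each arc $k_i^j$ the push-off lies in an open $3$-cell and so misses the entire $2$-skeleton, in particular every $2$-cell occurring in $\Sigma^k$; hence $(K^j)'$ meets $\Sigma^k$ only inside the crossing neighborhoods $f^{-1}(U_i)$, and $(K^j)'\cdot\Sigma^k=\sum_{i=0}^{n-1}N_i$, where $N_i$ collects the intersection points in $f^{-1}(U_i)$. Inside $f^{-1}(U_i)$ the push-off runs from $E^{h_i^j}$ (along $k_i^j$) to $E^{h_{i+1}^j}$ (along $k_{i+1}^j$), passing beneath the lifts of the over-arc $k_{o(i)}$, so the only $2$-cells of $\Sigma^k$ it can cross there are the lifts of $A_{o(i)}$ hanging below that over-arc, namely $B_{o(i)}$ and the cells $R_{o(i)}^s,L_{o(i)}^s$.

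Next I would run exactly the sixteen-case analysis used in the proof of Theorem~\ref{chains.thm}: for each local configuration of Figures~\ref{rijlijcases.fig} and~\ref{rijlijcasesneg.fig}, determined by the configuration diagrams at $i$ and $o(i)$ together with $\epsilon(i)$, $a(i,j)$, $b(i,j)$, $\epsilon_a(i,j)$, $\epsilon_b(i,j)$, I would read off the signed number of times the push-off arc crosses each of $B_{o(i)}$, $R_{o(i)}^{a(i,j)}$, $L_{o(i)}^{a(i,j)}$, $R_{o(i)}^{b(i,j)}$, $L_{o(i)}^{b(i,j)}$, and multiply by that cell's coefficient in $\Sigma^k$ --- a coefficient equal to $\pm x_{o(i)}^{k,\bullet}$ except that $B_m$ carries the constant $1$ when $k=0$ and $L_m^k$ carries the constant $1$ when $k\ne0$. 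This writes each $N_i$ in the form $\epsilon_a(i,j)x_{o(i)}^{k,a(i,j)}+\epsilon_b(i,j)x_{o(i)}^{k,b(i,j)}$ plus an explicit constant contributed by those distinguished cells.

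The final step is to assemble and simplify. Summing over $i$ and substituting the defining relations of Theorem~\ref{chains.thm}, $x_i^{k,j}-x_{i+1}^{k,j}=\epsilon_a(i,j)x_{o(i)}^{k,a(i,j)}+\epsilon_b(i,j)x_{o(i)}^{k,b(i,j)}-C_a(i,j,k)-C_b(i,j,k)$ (valid precisely because $\Sigma^k$ bounds $K^k$), the terms $x_i^{k,j}-x_{i+1}^{k,j}$ telescope around the cyclic index $i$, and the remaining $\epsilon_b$-contributions and extraneous constants collapse against the $C_b$-terms, leaving exactly $\sum_{i=0}^{n-1}\bigl(\epsilon_a(i,j)x_{o(i)}^{k,a(i,j)}-C_a(i,j,k)\bigr)=\sum_i\text{Int}(i,j,k)$. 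The main obstacle is the geometric bookkeeping in the middle step --- tracking, in the branched cover near each crossing and for both crossing signs, which of the five candidate lifted $2$-cells the push-off actually meets and with what sign --- compounded by the fact that $\text{Int}(i,j,k)$ is not literally the intersection contribution of crossing $i$ (as one already sees in the Figure-$8$ computation), so the reduction to the stated form genuinely requires invoking the relations that make $\Sigma^k$ a relative cycle rather than matching contributions crossing by crossing.
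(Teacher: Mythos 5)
Your setup (the push-off of $K^j$ into the $3$-cells $E^{h_i^j}$, and the localization of all intersections with $\Sigma^k$ to the crossing neighborhoods $f^{-1}(U_i)$, one junction per crossing) matches the paper. But the middle step contains a genuine error. Near crossing $i$ the push-off dips under the over-strand exactly once, so it punctures exactly \emph{one} lift of $A_{o(i)}$: the lift incident to the $3$-cell $E^{h_i^j}$, which by Definition~\ref{abdefinition.def} is the one with superscript $a(i,j)$ (namely $R_{o(i)}^{a(i,j)}$, $L_{o(i)}^{a(i,j)}$, or $B_{o(i)}$, according to the sign $\epsilon_a(i,j)$ and whether $a(i,j)=0$). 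It never meets the cell indexed by $b(i,j)$, which sits on the far side of the vertical $1$-cell $r_i^j-l_i^j$. Consequently the local contribution $N_i$ is \emph{not} of the form $\epsilon_a(i,j)x_{o(i)}^{k,a(i,j)}+\epsilon_b(i,j)x_{o(i)}^{k,b(i,j)}+\mathrm{const}$; it is literally $\epsilon_a(i,j)x_{o(i)}^{k,a(i,j)}-C_a(i,j,k)$, i.e.\ $\text{Int}(i,j,k)$ \emph{is} the intersection contribution of crossing $i$, and the theorem follows by a direct case check (the paper's Cases 1a--2d) with no algebraic simplification. Your parenthetical about the Figure-$8$ computation is a misreading: that computation is organized by $2$-cell rather than by crossing, but $(K^1)'$ meets exactly four $2$-cells ($B_2$, $R_1^1$, $L_1^1$, $R_3^1$) --- one per crossing --- exactly as the crossing-by-crossing picture predicts.

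The proposed repair via telescoping also does not close up formally. Substituting the relations $\alpha_i^{k,j}=0$ into $\sum_i\bigl(\epsilon_a(i,j)x_{o(i)}^{k,a(i,j)}+\epsilon_b(i,j)x_{o(i)}^{k,b(i,j)}\bigr)$ telescopes the $x_i^{k,j}-x_{i+1}^{k,j}$ terms to zero and leaves $\sum_i\bigl(C_a(i,j,k)+C_b(i,j,k)\bigr)$, a sum of constants with no residual dependence on the solution; there is no mechanism by which ``the remaining $\epsilon_b$-contributions collapse against the $C_b$-terms'' to leave the variable expression $\sum_i\epsilon_a(i,j)x_{o(i)}^{k,a(i,j)}$ standing. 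To make your scheme yield the stated formula you would still have to identify the extra constants in each $N_i$ and verify the cancellation configuration by configuration --- which is precisely the sixteen-case geometric bookkeeping you were hoping to outsource to the relations, now made harder by the incorrect premise that five cells can be hit at each crossing. The fix is simply to observe that one cell is hit per crossing and to verify the eight positive-writhe and eight negative-writhe configurations directly, as in Figures~\ref{intersectionnumbers.fig} and~\ref{intersectionnumbersj0.fig}.
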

Note that $\epsilon_a(i,j)=0$ when $a(i,j)=0$, so we treat the first term as 0 in this case even though $x_i^{k,0}$ is not defined.

\begin{proof} Let $(K^j)'$ be a push-off of $K^j$, obtained by pushing $K^j$ into the 3-cells $E^{h_i^j}$ such that $(K^j)'$ intersects the 2-skeleton transversely. Let $(k_i^j)'$ denote the corresponding push-off of $k_i^j$.   Let $\text{Int}(i,j,k)$ denote the signed intersection number of $\overline{(k_i^j)'\cup (k_{i+1}^j)'}$ with $\Sigma^k$. 
	
	We now show $\text{Int}(i,j,k)=\epsilon_a(i,j)x_{o(i)}^{k,a(i,j)}-C_a(i,j,k)$.
	
	 Depending on the values of $\epsilon(i)$ and $\epsilon_a(i,j)$, $(K^j)'$ meets one of the 2-cells $R_{o(i)}^{a(i,j)}$, $L_{o(i)}^{a(i,j)}$, or $B_{o(i)}$ transversely, as shown in Figure ~\ref{intersectionnumbers.fig} when $j\neq 0$, and in Figure ~\ref{intersectionnumbersj0.fig} when $j\neq 0$. 
	
	Recall that 
	$$\Sigma^0= \sum_{i=0}^{n-1} B_i + \sum_{j=1}^q \sum_{i=0}^{n-1} x_i^{0,j}(R_i^j-L_i^j),$$
	
	and if $k\neq 0$, 
    
   	$$\Sigma^k=  \sum_{i=0}^{n-1}x_i^{k,k}R_i^k+(1-x_i^{k,k})L_i^k+ \sum_{j\in\{0,\dots,q\}\setminus\{k\} }\quad \sum_{i=0}^{n-1}x_i^{k,j}(R_i^j-L_i^j),$$
	
	where the $x_i^{k,j}$ are a solution to the system of equations in Theorem ~\ref{chains.thm}.

	{\bf Case 1: $k=0$.} 
	
	At crossing $i$, $(K^j)'$ meets $R_{o(i)}^{a(i,j)}$, $L_{o(i)}^{a(i,j)}$, or $B_{o(i)}$, as shown in Figure ~\ref{intersectionnumbers.fig} if $j\neq 0$ or Figure ~\ref{intersectionnumbersj0.fig} if $j=0$.
	
{\bf Case 1a:}	 Suppose $(K^j)'$ meets $R_{o(i)}^{a(i,j)}$.   If $\epsilon(i)=1$, the intersection number of $(K^j)'$ with $R_{o(i)}^{a(i,j)}$ is positive, and the coefficient of $R_{o(i)}^{a(i,j)}$ in $\Sigma^k$ is $x_{o(i)}^{k,a(i,j)}$. Then $\text{Int}(i,j,k)=x_{o(i)}^{k,a(i,j)}$. In this case, $\epsilon_a(i,j)=1$ and $C_a(i,j,k)=0$.  Therefore $\text{Int}(i,j,k)=\epsilon_a(i,j)x_{o(i)}^{k,a(i,j)}-C_a(i,j,k)$ as desired.	 If instead $\epsilon(i)=-1$, the intersection number of $(K^j)'$ with $R_{o(i)}^{a(i,j)}$ is negative, but the coefficient of $R_{o(i)}^{a(i,j)}$ in $\Sigma^k$ is still $x_{o(i)}^{k,a(i,j)}$.  Therefore $\text{Int}(i,j,k)=-x_{o(i)}^{k,a(i,j)}$.  In this case, $\epsilon(i,j)=-1$ and $C_a(i,j,k)=0$. Therefore $\text{Int}(i,j,k)=\epsilon_a(i,j)x_{o(i)}^{k,a(i,j)}-C_a(i,j,k)$ as well.
	
{\bf Case 1b:}	Suppose instead that $(K^j)'$ meets $L_{o(i)}^{a(i,j)}$.  If $\epsilon(i)=1$, the intersection number of $(K^j)'$ with $L_{o(i)}^{a(i,j)}$ is positive, and the coefficient of $L_{o(i)}^{a(i,j)}$ in $\Sigma^k$ is $-x_{o(i)}^{k,a(i,j)}$. Then $\text{Int}(i,j,k)=-x_{o(i)}^{k,a(i,j)}$. In this case, $\epsilon_a(i,j)=-1$ and $C_a(i,j,k)=0$.  Therefore $\text{Int}(i,j,k)=\epsilon_a(i,j)x_{o(i)}^{k,a(i,j)}-C_a(i,j,k)$ as desired.	 If instead $\epsilon(i)=-1$, the intersection number of $(K^j)'$ with $L_{o(i)}^{a(i,j)}$ is negative, but the coefficient of $L_{o(i)}^{a(i,j)}$ in $\Sigma^k$ is still $-x_{o(i)}^{k,a(i,j)}$.  Therefore $\text{Int}(i,j,k)=x_{o(i)}^{k,a(i,j)}$.  In this case, $\epsilon(i,j)=1$ and $C_a(i,j,k)=0$. Therefore $\text{Int}(i,j,k)=\epsilon_a(i,j)x_{o(i)}^{k,a(i,j)}-C_a(i,j,k)$ as well.
	
{\bf Case 1c:}	Finally,  suppose that $(K^j)'$ meets $B_{o(i)}$.  Since $k=0$, the coefficient of $B_{o(i)}$ in $\Sigma^k$ is 1.  The intersection number of $(K^j)'$ with $B_{o(i)}$ is simply $\text{Int}(i,j,k)=\epsilon(i)$.  In this case, $\epsilon_a(i,j)=0$ and $C_a(i,j,k)=-\epsilon(i)$.  Therefore $\text{Int}(i,j,k)=\epsilon_a(i,j)x_{o(i)}^{k,a(i,j)}-C_a(i,j,k)$.

	{\bf Case 2: $k\neq 0$.}
	
	At crossing $i$, $(K^j)'$ meets $R_{o(i)}^{a(i,j)}$, $L_{o(i)}^{a(i,j)}$, or $B_{o(i)}$, as shown in Figure ~\ref{intersectionnumbers.fig} if $j\neq 0$ or Figure ~\ref{intersectionnumbersj0.fig} if $j=0$.

	{\bf Case 2a:} Suppose $(K^j)'$ meets $R_{o(i)}^{a(i,j)}$.  This is identical to Case 1a.
	
	{\bf Case 2b:} Suppose $(K^j)'$ meets $L_{o(i)}^{a(i,j)}$ and $a(i,j)\neq k$.  This is identical to Case 1b.
	
	{\bf Case 2c:} Suppose $(K^j)'$ meets $L_{o(i)}^{a(i,j)}$ and $a(i,j)= k$. If $\epsilon(i)=1$, the intersection number of $(K^j)'$ with $L_{o(i)}^{a(i,j)}$ is positive, and the coefficient of $L_{o(i)}^{a(i,j)}$ in $\Sigma^k$ is $1-x_{o(i)}^{k,a(i,j)}$. Then $\text{Int}(i,j,k)=1-x_{o(i)}^{k,a(i,j)}$. In this case, $\epsilon_a(i,j)=-1$ and $C_a(i,j,k)=-1$.  Therefore $\text{Int}(i,j,k)=\epsilon_a(i,j)x_{o(i)}^{k,a(i,j)}-C_a(i,j,k)$ as desired.	 If instead $\epsilon(i)=-1$, the intersection number of $(K^j)'$ with $L_{o(i)}^{a(i,j)}$ is negative, but the coefficient of $L_{o(i)}^{a(i,j)}$ in $\Sigma^k$ is still $1-x_{o(i)}^{k,a(i,j)}$.  Therefore $\text{Int}(i,j,k)=x_{o(i)}^{k,a(i,j)}-1$.  In this case, $\epsilon_a(i,j)=1$ and $C_a(i,j,k)=1$. Therefore $\text{Int}(i,j,k)=\epsilon_a(i,j)x_{o(i)}^{k,a(i,j)}-C_a(i,j,k)$.

	{\bf Case 2d:}	Suppose that $(K^j)'$ meets $B_{o(i)}$. Since $k\neq 0$, the coefficient of $B_{o(i)}$ in $\Sigma^k$ is 0. Therefore $\text{Int}(i,j,k)=0$.  In this case, $\epsilon_a(i,j)=0$ and $C_a(i,j,k)=0$.  Therefore $\text{Int}(i,j,k)=\epsilon_a(i,j)x_{o(i)}^{k,a(i,j)}-C_a(i,j,k)$.
	\newpage

	\begin{figure}[htbp]
	\includegraphics[width=\textwidth]{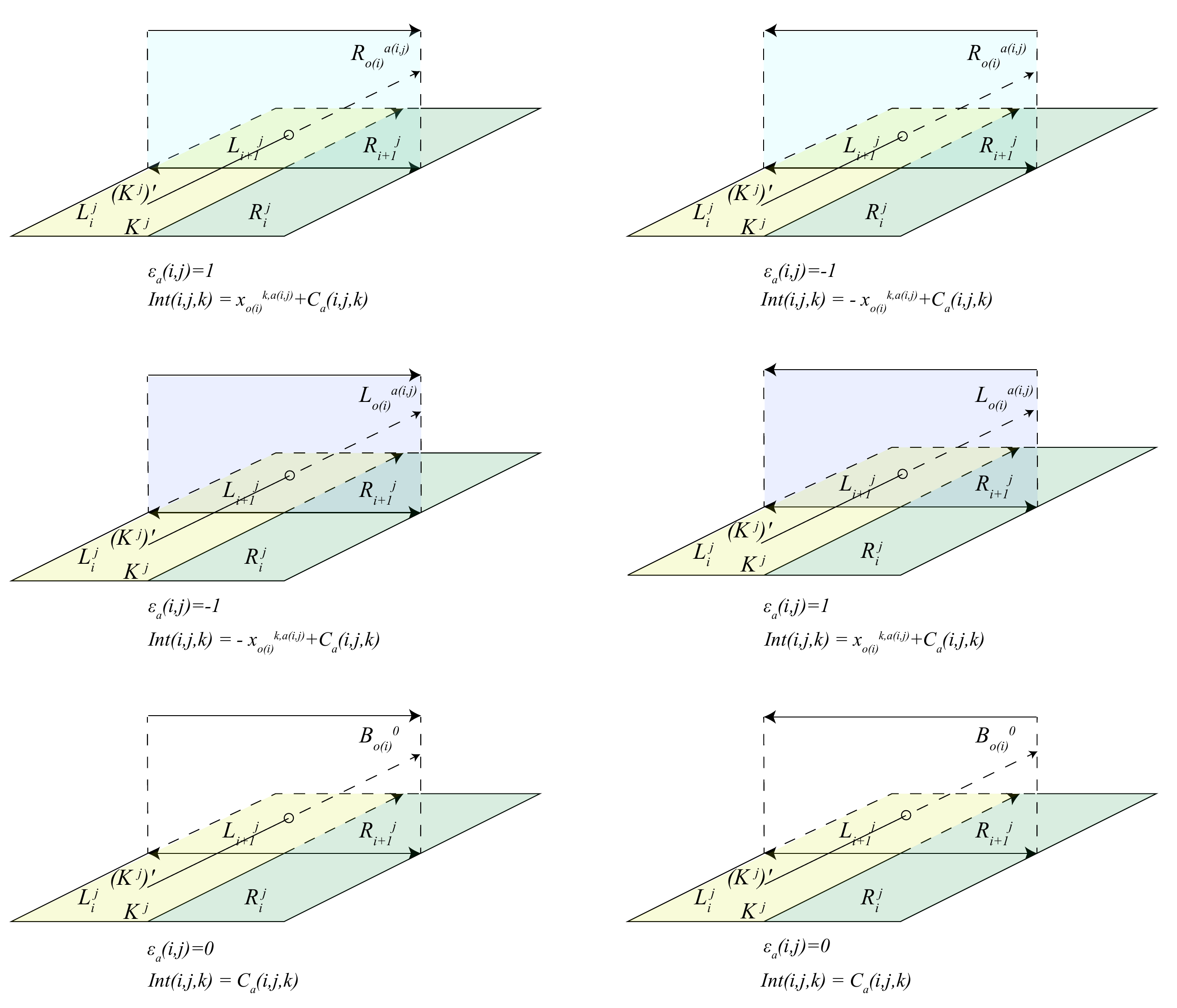}
	
	\caption{}
	\label{intersectionnumbers.fig}
	\caption{The push-off $(K_j)'$ of $K_j$ meets one of the 2-cells in $\Sigma^k$.}
	\end{figure}
	
	\begin{figure}[htbp]
	\includegraphics[width=\textwidth]{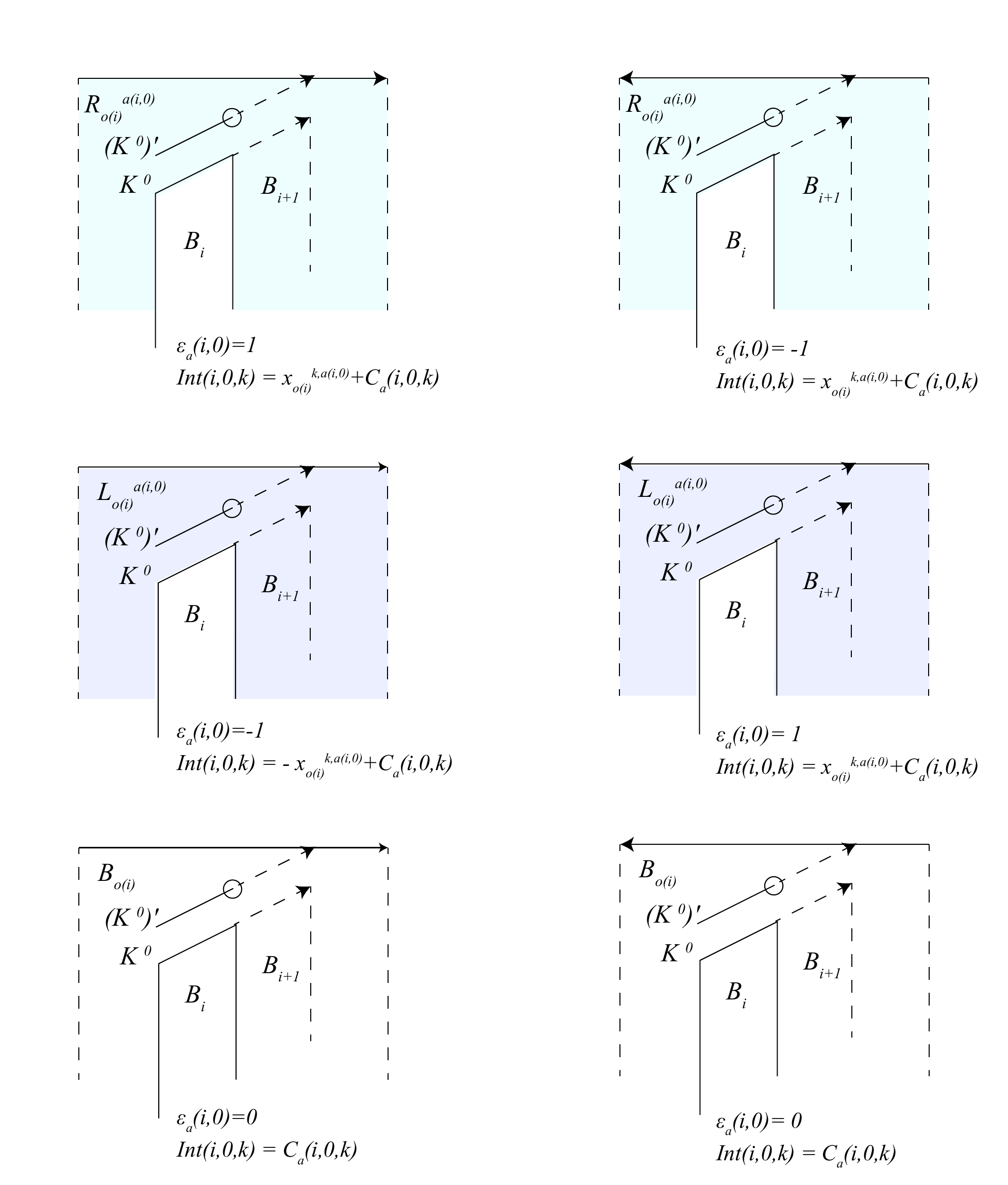}
	
	\caption{}
	\label{intersectionnumbersj0.fig}
	\caption{The push-off $(K_0)'$ of $K_0$ meets one of the 2-cells in $\Sigma^k$.}
	\end{figure}
\end{proof}

\newpage

\section{Additional Examples}\label{examples.sec}

In this section, we carry out our algorithm in full detail on the trefoil.  We then include a few additional examples to illustrate the use of the code at ~\cite{cahngithubdihedrallinking}.

\subsection{The trefoil}

Consider the trefoil knot with 3-coloring and configuration diagram shown in Figure ~\ref{configtrefoil.fig}.

From the configuration diagram, we compute the values of $a(i,j)$ and $b(i,j)$, shown in the tables below.
\begin{center}
\begin{tabular}{|c|c|c|}
\hline
$a(i,j)$&$j=0$&$j=1$\\ \hline
$i=0$&$1$&$1$ \\ \hline
$i=1$&$1$&$0$ \\ \hline
$i=2$&$1$&$1$ \\ \hline
$i=3$&$0$&$1$ \\ \hline
\end{tabular}
\hspace{1in}
\begin{tabular}{|c|c|c|}
\hline
$b(i,j)$&$j=0$&$j=1$\\ \hline
$i=0$&$1$&$0$ \\ \hline
$i=1$&$1$&$1$ \\ \hline
$i=2$&$1$&$0$ \\ \hline
$i=3$&$0$&$1$ \\ \hline
\end{tabular}
\end{center}
\begin{figure}
\includegraphics[width=6in]{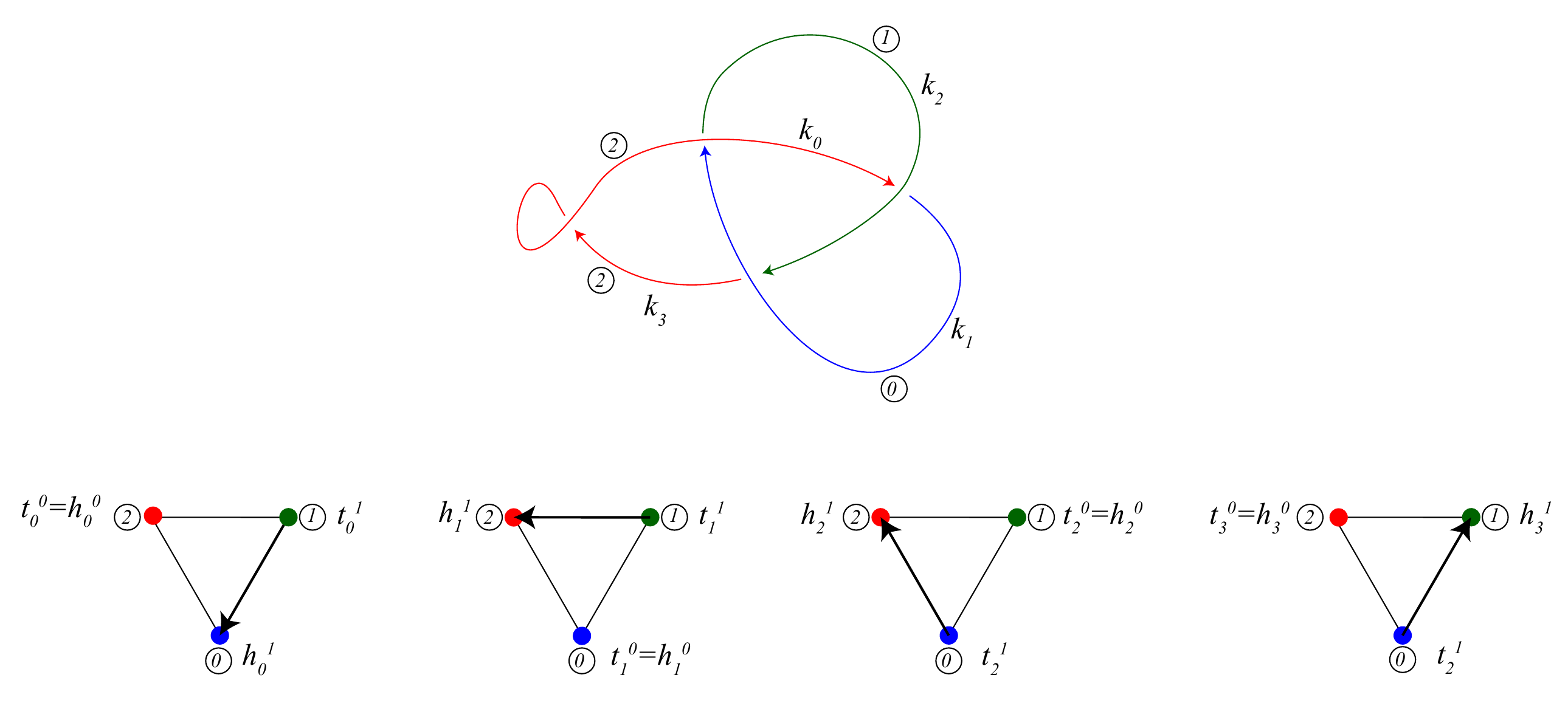}
\caption{A 3-colored diagram of the trefoil, and the corresponding configuration diagrams.}
\label{configtrefoil.fig}
\end{figure}

We then compute the values of $\epsilon_a(i,j)$ and $\epsilon_b(i,j)$, shown in the tables below.
\begin{center}
\begin{tabular}{|c|c|c|}
\hline
$\epsilon_a(i,j)$&$j=0$&$j=1$\\ \hline
$i=0$&$1$&$-1$ \\ \hline
$i=1$&$1$&$0$ \\ \hline
$i=2$&$-1$&$1$ \\ \hline
$i=3$&$0$&$1$ \\ \hline
\end{tabular}
\hspace{1in}
\begin{tabular}{|c|c|c|}
\hline
$\epsilon_b(i,j)$&$j=0$&$j=1$\\ \hline
$i=0$&$-1$&$0$ \\ \hline
$i=1$&$-1$&$1$ \\ \hline
$i=2$&$1$&$0$ \\ \hline
$i=3$&$0$&$1$ \\ \hline
\end{tabular}
\end{center}

Last, we compute the values of $C_a(i,j,k)$ and $C_b(i,j,k)$.
\begin{center}
\begin{tabular}{|c|c|c|}
\hline
$C_a(i,j,0)$&$j=0$&$j=1$\\ \hline
$i=0$&$0$&$0$ \\ \hline
$i=1$&$0$&$-1$ \\ \hline
$i=2$&$0$&$0$ \\ \hline
$i=3$&$-1$&$0$ \\ \hline
\end{tabular}
\hspace{1in}
\begin{tabular}{|c|c|c|}
\hline
$C_b(i,j,0)$&$j=0$&$j=1$\\ \hline
$i=0$&$0$&$1$ \\ \hline
$i=1$&$0$&$0$ \\ \hline
$i=2$&$0$&$1$ \\ \hline
$i=3$&$1$&$0$ \\ \hline
\end{tabular}
\end{center}

\begin{center}
\begin{tabular}{|c|c|c|}
\hline
$C_a(i,j,1)$&$j=0$&$j=1$\\ \hline
$i=0$&$0$&$-1$ \\ \hline
$i=1$&$0$&$0$ \\ \hline
$i=2$&$-1$&$0$ \\ \hline
$i=3$&$0$&$0$ \\ \hline
\end{tabular}
\hspace{1in}
\begin{tabular}{|c|c|c|}
\hline
$C_b(i,j,1)$&$j=0$&$j=1$\\ \hline
$i=0$&$0$&$0$ \\ \hline
$i=1$&$0$&$1$ \\ \hline
$i=2$&$1$&$0$ \\ \hline
$i=3$&$0$&$1$ \\ \hline
\end{tabular}
\end{center}
Now we determine the $\alpha_i^{k,j}$ using Theorem ~\ref{chains.thm}.  Each table gives the augmented matrix corresponding to the system of equations $\alpha_i^{k,j}=0$ for $i\in\{0,1,2,3\}$, $j\in\{0,1\}$, and fixed $k\in \{0,1\}$.
\begin{center}
\begin{tabular}{|c|c|c|c|c||c|}
\hline
$\Sigma^0$&$x_0^{0,1}$&$x_1^{0,1}$&$x_2^{0,1}$&$x_3^{0,1}$&
\\ \hline
$\alpha_0^{0,1}$&1&-1&1&0&-1
\\ \hline
$\alpha_1^{0,1}$&-1&1&-1&0&1
\\ \hline
$\alpha_2^{0,1}$&0&-1&1&-1&-1
\\ \hline
$\alpha_3^{0,1}$&-1&0&0&-1&0
\\ \hline
\end{tabular}
\hspace{1in}
\begin{tabular}{|c|c|c|c|c||c|}
\hline
$\Sigma^1$&$x_0^{1,1}$&$x_1^{1,1}$&$x_2^{1,1}$&$x_3^{1,1}$&
\\ \hline
$\alpha_0^{1,1}$&1&-1&1&0&1
\\ \hline
$\alpha_1^{1,1}$&-1&1&-1&0&-1
\\ \hline
$\alpha_2^{1,1}$&0&-1&1&-1&0
\\ \hline
$\alpha_3^{1,1}$&-1&0&0&-1&-1
\\ \hline
\end{tabular}
\end{center}
One choice of solution for each system is:
\begin{center}
\begin{tabular}{|c|c|c|c|}
\hline
$x_0^{0,1}$&$x_1^{0,1}$&$x_2^{0,1}$&$x_3^{0,1}$
\\ \hline
0&1&0&0
\\ \hline
\end{tabular}
\hspace{1in}
\begin{tabular}{|c|c|c|c|}
\hline
$x_0^{1,1}$&$x_1^{1,1}$&$x_2^{1,1}$&$x_3^{1,1}$
\\ \hline
1&0&0&0
\\ \hline
\end{tabular}
\end{center}

Finally, we determine the linking numbers of the $K^j$ using Theorem~\ref{linking.thm}.

\begin{tabular}{|c|c|c|c||c|}
\hline
Int(0,0,0)&Int(1,0,0)&Int(2,0,0)&Int(3,0,0)&$\text{lk}(K^0,K^0)$
\\ \hline
0&0&-1&1&0
\\ \hline
\end{tabular}

\begin{tabular}{|c|c|c|c||c|}
\hline
Int(0,0,1)&Int(1,0,1)&Int(2,0,1)&Int(3,0,1)&$\text{lk}(K^0,K^1)$
\\ \hline
0&1&1&0&2
\\ \hline
\end{tabular}

\begin{tabular}{|c|c|c|c||c|}
\hline
Int(0,1,0)&Int(1,1,0)&Int(2,1,0)&Int(3,1,0)&$\text{lk}(K^1,K^0)$
\\ \hline
0&1&1&0&2
\\ \hline
\end{tabular}

\begin{tabular}{|c|c|c|c||c|}
\hline
Int(0,1,1)&Int(1,1,1)&Int(2,1,1)&Int(3,1,1)&$\text{lk}(K^1,K^1)$
\\ \hline
1&0&0&0&1
\\ \hline
\end{tabular}

Therefore, the dihedral linking invariant of the trefoil is $\{\text{lk}(K^0,K^1)\}=\{2\}$.
\subsection{The knot $8_{16}$}.  The knot $8_{16}$ has determinant 35, so is both 5- and 7-colorable. We now illustrate how to use the code available at ~\cite{cahngithubdihedrallinking} to determine the 5- and 7-dihedral linking invariants.  

The list of over-arc subscripts $[o(0),\dots,o(7)]$ is

overstrands=[6, 4, 0, 7, 2, 3, 1, 5].

The list of local writhe numbers $[\epsilon(0),\dots,\epsilon(7)]$ is 

signs=[1, 1, 1, -1, 1, -1, 1, -1].

The unique 5-coloring $\rho_5$ $[c(0),\dots,c(7)]$ of $8_{16}$ up to equivalence is

coloring5=[2, 3, 2, 2, 0, 4, 0, 1].

The function DLNmatrix(5, overstrands, signs, coloring5) returns the 3x3 matrix below, whose $jk$-entry is the linking number of $K^j$ and $K^k$.  Note that the diagonal entries are self-linking numbers and are not part of the dihedral linking invariant.

$$\begin{pmatrix}
-22& 18& -6\\ 
18& -14& 6\\
-6 & 6& -2
\end{pmatrix}$$

Therefore $DLN(8_{16},\rho_5)=\{18,6,-6\}.$

The unique 7-coloring $\rho_7$ of $8_{16}$ up to equivalence is

coloring7=[3, 4, 5, 1, 1, 2, 0, 1].

 The function DLNmatrix(7, overstrands, signs, coloring7) returns the 4x4 matrix below, whose $jk$-entry is the linking number of $K^j$ and $K^k$.  Again note that the diagonal entries are self-linking numbers and are not part of the dihedral linking invariant.
 
 $$\begin{pmatrix}
 22& -6& -22& 18\\
 -6& 0& 6& -2\\
 -22& 6& 20& -14\\
 18& -2& -14& 8
 \end{pmatrix}
$$

Therefore $DLN(8_{16},\rho_7)=\{-6,-22,18,6,-2,-14\}.$

\section{Application: Pseudo-Branch Curves and Computing Ribbon Obstructions}\label{xi.sec}

In ~\cite{kjuchukova2018dihedral}, Kjuchukova introduced an invariant $\Xi_p(K,\rho)$, of a knot $K$ equipped with a $p$-coloring $\rho$.  This invariant gives rise to a homotopy-ribbon obstruction~\cite{cahnkjuchukova2017singbranchedcovers,geske2021signatures}, as well as bounds on 4-genera associated to $p$-colorable knots~\cite{cahnkju2018genus}. In~\cite{cahnkjuchukova2018computing}, the first author and Kjuchukova gave a diagrammatic algorithm for computing $\Xi_p$. This algorithm requires one to compute linking numbers of lifts of curves in $S^3- K$ to the $p$-fold dihedral cover of $S^3$ along $K$; that is, one must compute the linking number of curves other than the branch curves.  The first author and Kjuchukova gave an algorithm for computing these linking numbers when $p=3$ in~\cite{cahnkjuchukova2016linking}. In addition, all of the examples of the $\Xi_p$ computation in \cite{cahnkjuchukova2018computing} are carried out for $p=3$.  In the following sections, we give an overview of the linking number computation for a general odd number $p$, and then carry out the $\Xi_p$ computation for an example with $p>3$. This is the first such computation of the $\Xi_p$ invariant.  Another explicit method for computing $\Xi_p$ is given in~\cite{cahn2023algorithms}, but that method requires additional data---a $p$-colorable surface in $B^4$ over which the $p$-coloring of $K$ extends---and such surfaces need not exist in general~\cite{kjorr2020admissible}.

\subsection{Computing linking numbers of pseudo-branch curves}\label{pseudobranch.sec} The algorithm for computing the linking numbers of the branch curves in a $p$-fold dihedral cover of $S^3$ along $K$ extends to an algorithm for computing linking numbers of arbitrary curves in the $p$-fold dihedral branched cover, presented as lifts of curves $\gamma$ and $\delta$, called {\it pseudo-branch curves}, in $S^3- K$.  Such an algorithm was given in the case $p=3$ by the first author and Kjuchukova in ~\cite{cahnkjuchukova2016linking}. Here we give an overview of the algorithm for general odd $p$. This algorithm is implemented in the file \verb|dihedrallinking_pseudobranch.py| at~\cite{cahngithubdihedrallinking} and is used to perform the computation of $\Xi_p$ in the next section.

We begin with a diagram of $K\cup \gamma\cup \delta$. The arcs of the knot $K$ are equipped with a $p$-coloring. Note that the pseudo-branch curves $\gamma$ and $\delta$ are not colored. In the corresponding map $\pi_1(S^3- K\cup\gamma\cup \delta)\twoheadrightarrow D_p$, meridians of $\gamma$ and $\delta$ map to the identity.  Each of $\gamma$ and $\delta$ has $p$ path-lifts to the irregular $p$-fold dihedral cover of $S^3$ branched along $K$. For simplicity we will assume that each of these lifts is closed, as is the case in our application. Denote these lifts by $\gamma^j$ and $\delta^k$ for $j,k\in\mathbb{Z}_p$. The arcs of the diagram of $K\cup \gamma\cup \delta$ are labeled as described in \cite{cahnkjuchukova2016linking}. Briefly, we denote the arcs of $\gamma$ by $q_0,\dots, q_m$ and the arcs of $\delta$ by $p_0,\dots, p_m$. The lift of $q_i$ that lies on the lift $\gamma^j$ is denoted $q_i^j$. Similarly, $p_i^k$ denotes the lift of $p_i$ that lies on $\delta^k.$

The steps of the algorithm are as follows:
\begin{enumerate}
\item Equip $S^3$ with a cell structure determined by the cone on the link $K\cup \gamma\cup \delta$ (presented as a link diagram).  This structure consists of ``horizontal'' 1-cells determined by arcs of $K$, $\gamma$, and $\delta$;``vertical'' 2-cells below arcs of $K$, $\gamma$, and $\delta$;  ``vertical'' 1-cells below all crossings of the diagram; one 3-cell in the complement of the cone; and 0-cells as needed.
\item Lift this cell-structure to the $p$-fold irregular dihedral cover of $S^3$ branched along $K.$ 
\item Determine the local configuration of the lift of the cell structure in a neighborhood of each crossing.  For self-crossings of $K$, the lifts are as described in Figures~\ref{rijlijcases.fig} and ~\ref{rijlijcasesneg.fig}.  For positive crossings of $K$ over or under $\gamma$ or $\delta$, the lifts are as described in Figures ~\ref{pboverk.fig} and ~\ref{koverpb.fig}. One can draw analogous figures for negative crossings.  
\item Set up a system of linear equations whose solution determines a 2-chain $\Sigma^k$ bounding each rationally null-homologous closed lift $\delta^k$ of $\delta$. Above crossing $i$, the 2-chain will include one copy of the 2-cell incident to $p_{o(i)}^k$, as well as some number of copies of each $R_i^l-L_i^l$. The system can be set up by analyzing Figure ~\ref{pboverk.fig}, and the analogous figure for negative crossings.
\item Compute the intersection number of each rationally null-homologous closed lift $\gamma^j$ of $\gamma$ with each 2-chain $\Sigma^k$ above.   The intersection numbers can be determined by analyzing Figure ~\ref{koverpb.fig}, and the analogous figure for negative crossings.  In particular, each arc $q_i^j$ of $\gamma^j$ terminates at some 2-cell $C$; the local intersection number at crossing $i$ can be computed by multiplying the coefficient of $C$ in $\Sigma^k$ by the sign of crossing $i$.
\end{enumerate}

\begin{figure}[htbp]
	\includegraphics[width=6in]{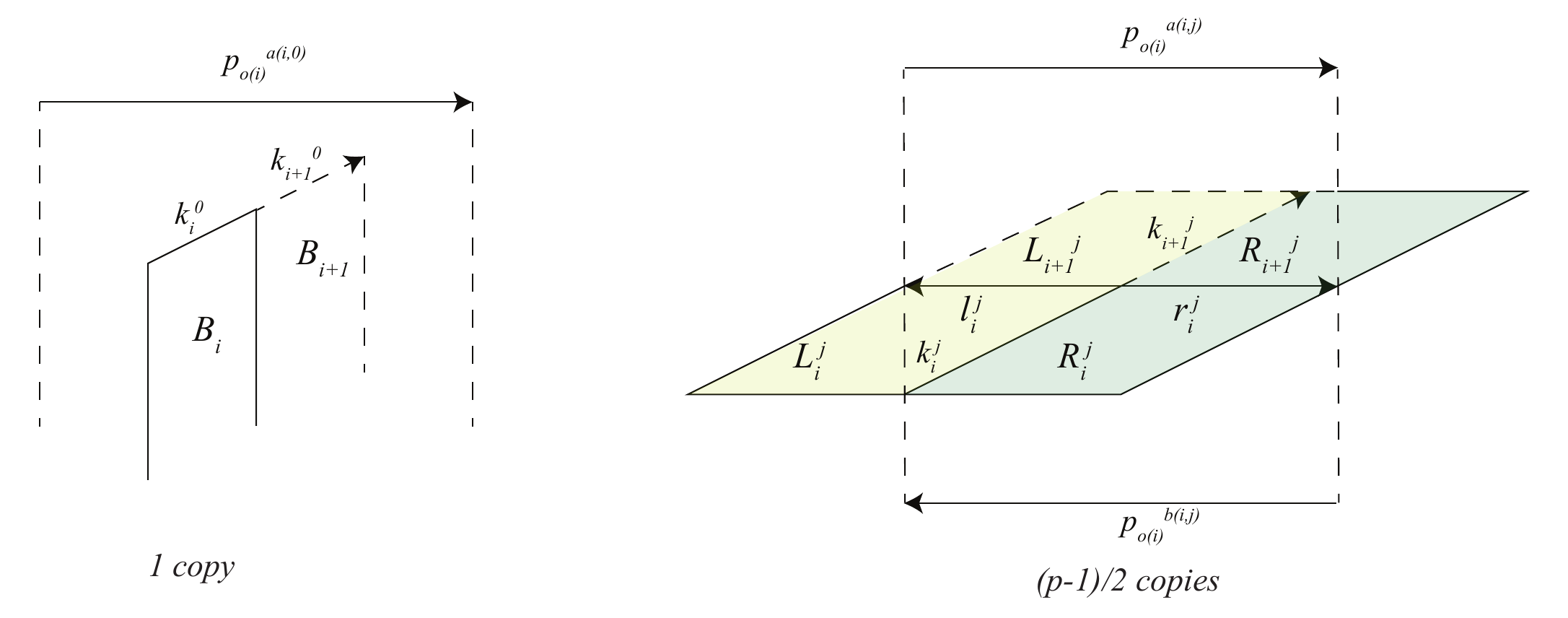}
	\caption{Configuration of 2-cells above a crossing of arc $k_i$ of the knot $K$ under an arc $p_{o(i)}$ of the pseudo-branch curve $\delta$. }\label{pboverk.fig}
\end{figure}

\begin{figure}[htbp]
	\includegraphics[width=6in]{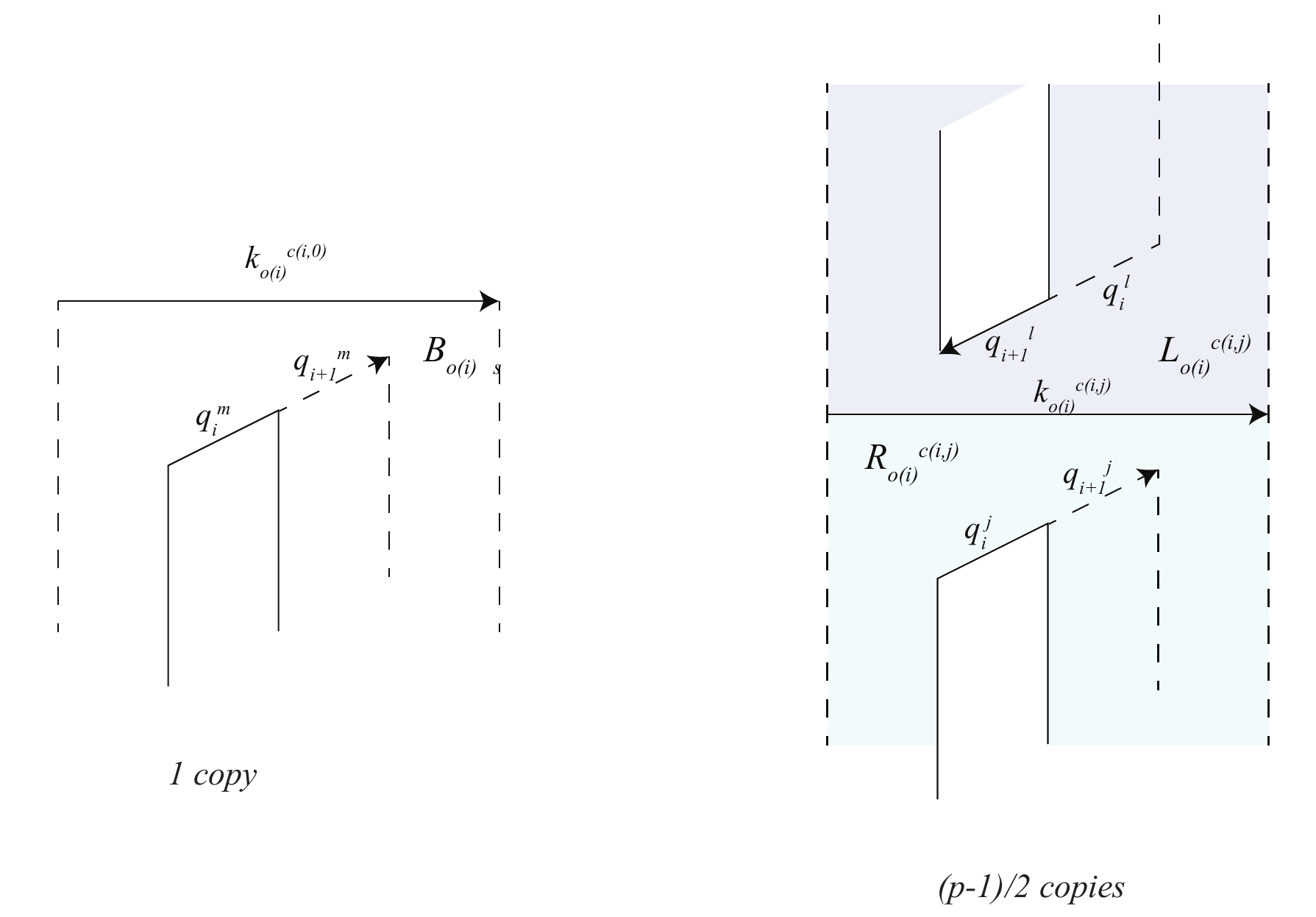}
	\caption{Configuration of 2-cells above a crossing of arc $q_i$ of the pseudo-branch curve $\gamma$ under arc $k_{o(i)}$ of the knot $K$. }\label{koverpb.fig}
\end{figure}
\subsection{Computing ribbon obstructions}\label{ribbonobs.sec}
Let $K$ be a $p$-colored knot in $S^3$ with Seifert surface $\Sigma$, and corresponding Seifert matrix $V$.  A mod $p$ {\it characteristic knot} for $K$ is an embedded curve $\beta\subset \Sigma$ such that $(V+V^T)\beta \equiv 0 \mod p$ \cite{CS1984linking}. Homology classes of mod $p$ characteristic knots in $H_1(\Sigma)$ are in 1-1 correspondence with $p$-colorings of $K$.

Cappell and Shaneson \cite{CS1984linking} constructed a cobordism $W(K,\beta)$ from the $p$-fold irregular dihedral cover of $S^3$ along $K$, corresponding to a coloring $\rho$ of $K$, to the $p$-fold cyclic cover of $S^3$ along a characteristic knot $\beta$ corresponding to $\rho$. The following formula for $\Xi_p(K,\rho)$ from \cite{kjuchukova2018dihedral} depends on the signature $\sigma(W(K,\beta))$ of this cobordism, as well as the Tristram-Levine signatures $\sigma_{\zeta_i}$ of $\beta$, where the $\zeta_i$ denote primitive $p^{\text{th}}$ roots of unity, and the self-linking number of $\beta$ with respect to the symmetrized Seifert form $L_V$ for $\Sigma$:

$$\Xi_p(K,\rho)=\dfrac{p^2-1}{6p}L_V(\beta,\beta)+\sum_{i=1}^{p-1}\sigma_{\zeta_i}(\beta)+\sigma(W(K,\beta)).$$

The computation of the first two terms is straightforward. The term $\sigma(W(K,\beta))$ is equal to the signature of a matrix of linking numbers of certain lifts of a basis for $H_1(\Sigma- \beta)$ to the irregular $p$-fold dihedral cover of $K$. One can determine which lifts to choose directly from diagrammatic data, as laid out in \cite{cahnkjuchukova2018computing}.

We first give an overview of the method of constructing the $p$-fold dihedral cover of $S^3$ along $K$ from the $p$-fold cyclic cover $M_\beta$ of $S^3$ along $\beta$, and the construction of the cobordism $W(K,\beta)$, from \cite{CS1984linking}:
\begin{itemize}
	\item Let $\Sigma$ be a Seifert surface for $K$ and $\beta\subset \Sigma$ a mod $p$ characteristic knot for $K$.
	\item Thicken the Seifert surface to obtain $\Sigma\times [-1,1]$ and let $J$ denote its lift to $M_\beta$. See Figure ~\ref{star.fig}, top. Note that $\partial J$ is a union of $2p$ copies of $\Sigma- \beta$, identified along lifts of the right and left push-offs $\beta_r$ and $\beta_l$ of $\beta$ in $\Sigma$.
	\item Let $h:\Sigma\times[-1,1]\rightarrow \Sigma\times [-1,1]$ be the involution mapping $x\times t$ to $x\times -t$, and $\bar{h}$ a lift of $h$ to $J$.  
	\item The $p$-fold irregular dihedral cover of $S^3$ along $K$ is homeomorphic to the manifold obtained from $M- \text{int}(J)$ by identifying points on its boundary $\partial J$ via $\bar{h}$. The image of $\partial J$ under this involution is shown in Figure ~\ref{star.fig}, bottom.
	\item The cobordism $W(K,\beta)$ is the mapping cone on $\bar{h}$, namely $M_\beta\times [0,1]/\bar{h}\times 1$. 
\end{itemize}

Let $\{\omega_1,\omega_2,\dots,\omega_{g-1}\}$ denote a basis for $H_1(\Sigma- \beta)$, and let $\omega_i^+$ and $\omega_i^-$ denote the push-offs of these curves to $\Sigma\times 1$ and $\Sigma\times -1$ respectively.  Each of $\omega_i^+$ and $\omega_i^-$ has $p$ lifts to $M_\beta$, which lie on $\partial J$. The same holds for the push-offs $\beta_r^{\pm}$ and $\beta_l^{\pm}$.  These lifts bound cylinders in $J$ in pairs, and for each curve in $\{\omega_1,\omega_2,\dots,\omega_{g-1},\beta\}$, $(p-1)/2$ of these pairs bound cylinders in $\bar{h}(J)$. In the schematic diagrams in Figure ~\ref{star.fig}, these pairs lie directly opposite each other on the ``spokes'' of $J$ and $\bar{h}(J)$ respectively.  The corresponding cylinders can be capped off to form closed classes in $H_2(W(K,\beta))$ as described in \cite{kjuchukova2018dihedral,cahnkjuchukova2018computing}. Therefore the desired signature $\sigma(W(K,\beta))$ is (after a sign correction) the signature of a matrix $M$ of linking numbers of these pairs and their push-offs from $\partial J$ into $M_\beta- \text{int}(J)$; that is, $\sigma(W(K,\beta))=-\sigma(M)$~\cite{kjuchukova2018dihedral}, \cite[Theorem 1]{cahnkjuchukova2018computing}.

\subsection{Example of $\Xi_p$ computation} \label{xiexample.sec} We will show that $\Xi_5(K)=6$, where $K$ is the 5-colorable knot in Figure~\ref{K35.fig}.   We chose this knot because, in addition to being 5-colorable, it has a diagram in which the characteristic knot $\beta$ and the basis for $H_1(\Sigma-\beta)$ are relatively simple, but the Seifert surface still has high enough genus to demonstrate the full complexity of the computation. Another reason for choosing this particular knot is that we can confirm the value of $\Xi_p(K)$ by an alternative method, as follows. The knot $K$ is the knot $K_{3,5}$ in the family of knots in Figure 21 of \cite{cahn2023algorithms}. Section 9 of \cite{cahn2023algorithms} presents a method for computing $\Xi_p(K,\rho)$ using colored tri-plane diagram representations of surfaces in the 4-ball.  The method in \cite{cahn2023algorithms} only applies when one has a $p$-colored surface in $B^4$ with boundary $K$ over which the coloring $\rho$ extends.  This is the case for $K_{3,5}$, but there exist other $p$-colored knots for which no such surface exists \cite{kjuchukova2026extending}. In contrast, the method presented in \cite{cahnkjuchukova2018computing} and implemented below is fully general.

A Seifert surface for the knot $K$, cut along a choice $\beta$ of characteristic knot, is shown in Figure ~\ref{K35.fig}, together with a basis $\{A,B\}$ for $H_1(\Sigma- \beta)$. The characteristic knot $\beta$ is not drawn, but is parallel to its right and left push-offs $\beta_r$ and $\beta_l.$ We pick basepoints on each of $K$, $A$, $B$, $\beta_r$, and $\beta_l$. These basepoints are marked with dots in Figure ~\ref{K35.fig}. Let $\gamma_r$ and $\gamma_l$ be paths connecting a basepoint of $K$ to the basepoints of $\beta_r$ and $\beta_l$ respectively, and let $\gamma$ be the closed curve on $\Sigma$ obtained by concatenating $\gamma_r$ and $\gamma_l$ and identifying their endpoints on $\beta_r$ and $\beta_l$. 

We first compute the symmetrized Seifert matrix of $K$ with respect to the basis $\{A,B,\gamma,\beta\}$, from which one can confirm that $\beta$ is a mod 5 characteristic knot for $K$:

$$[L_V]=\begin{pmatrix}2&-1&0&0\\
-1&2&-1&0\\
0&-1&-4&-10\\
0&0&-10&0
\end{pmatrix}.$$

In addition, note that the self-linking term $L_V(\beta,\beta)$ of $\Xi_5(K)$ is equal to zero.
 
We equip $S^3$ with the cell structure determined by the cone on $K=K_{3,5}$ as described in Section ~\ref{setup.sec}.  

For each curve $C\in\{A,B,\beta,\beta_r,\beta_l\}$, let $C^+$ denote the push-off of $C$ to $\Sigma\times 1$ and let $C^-$ denote the push-off of $C$ to $\Sigma\times -1$. For $i\in \{0,\dots,4\}$, let $C^i$, $C^{i,+}$, and $C^{i,-}$ denote the lifts of $C$, $C^+$, and $C^-$ such that the lift of the basepoint lies in the 3-cell $E^i.$ Now let $C\in \{A,B,\beta_r\}$. We choose paths $\delta_A, \delta_B$ and $\gamma_r$ (called {\it anchor paths} in \cite{cahnkjuchukova2018computing}) connecting the basepoint of each $C$ to the basepoint of $K$; for simplicity we choose these paths such that their interiors are disjoint from $K$. To determine which pairs $C^{i,+}$ and $C^{j,-}$ bound cylinders in $J$, observe that traveling from the basepoint of $C^+$  to the basepoint of $K^+$ along a positive push-off of $\delta_A$, $\delta_B$, or $\gamma_r$, and then to the basepoint of $C^-$ along the negative push-off of $\delta_A$, $\delta_B$, or $\gamma_r$, requires passing through the vertical 2-cell below the basepoint of $K$ one time. If $K$ is colored $c$, this means one will pass from the 3-cell $E^i$ to the 3-cell $E^{\text{Ref}_c(i)}$, where $j=\text{Ref}_c(i)$ if vertices $j$ and $i$ of the regular $p$-gon are related by reflection over the line through vertex $c$.  Therefore for $C\in \{A,B,\beta_r\}$, $C^{i,+}$ and $C^{\text{Ref}_c(i),-}$ bound a cylinder in $J.$ See Figure ~\ref{cylinder_boundaries.fig}.  The superscripts of the lifts of $\beta_l$ can be determined by the monodromy associated to the anchor path $\gamma_l$, as discussed in \cite{cahnkjuchukova2018computing}; this information can be used to determine the superscripts of all lifts of positive and negative push-offs of $\{A,B,\beta,\beta_r,\beta_l\}$ as shown in Figure~\ref{star.fig}.

For our 5-colored diagram of $K=K_{3,5}$, note that the basepoint of $K$ lies on an arc colored 1.  When the vertices of a regular 5-gon are labeled 0,...,4 in counter-clockwise order, the vertices 0 and 2 are related by reflection over the line through 1, as are the vertices 3 and 4.  Therefore for $C\in\{A,B,\beta_r\}$, $C^{0,+}$ and $C^{2,-}$ bound a cylinder in $J$, as do $C^{3,+}$ and $C^{4,-}$.  After applying the involution $\bar{h}$, $C^{i,\pm}$ is identified with $C^{i,\mp}$.  Therefore $C^0$ and $C^2$ bound a cylinder in $\bar{h}(J)$, as do $C^3$ and $C^4$.

Recall that $\sigma(W(K,\beta))=-\sigma(M)$, where $M$ is a matrix of linking numbers of curves that bound cylinders in $\bar{h}(J)$.  In this case, $M$ is a $6\times 6$ matrix, whose entries are, for each $C\in\{A,B,\beta\}$, linking numbers of $C^0-C^2$ and $C^3-C^4$ with their push-offs into the interior of $M_\beta- \text{int}(J)$, namely $C^{0,+}-C^{2,-}$ and $C^{3,+}-C^{4,-}$. (In order to land in the interior of $M_\beta- \text{int}(J)$, we assume these push-offs lie on lifts of $\Sigma \times \pm (1+\epsilon)$.)

We begin by applying the algorithm in Section~\ref{pseudobranch.sec} to compute the linking numbers of all lifts of $A, B$, and $\beta$ with all lifts of each of their positive push-offs with respect to $\Sigma$.  These linking numbers are computed in the file \verb|Xi_computation.ipynb| at \cite{cahngithubdihedrallinking}, and are displayed in Table ~\ref{alllinkingnumbers.tab}.

We use the linking numbers in Table ~\ref{alllinkingnumbers.tab} to determine the matrix $M$, which is shown in Table ~\ref{int_form.tab}.  All eigenvalues of this matrix $M$ are negative.  Therefore $\sigma(M)=-6$, and $\sigma(W(K,\beta))=6.$

Finally we compute $\Xi_5(K_{3,5})$ using the formula in Section~\ref{ribbonobs.sec}. As the characteristic knot is an unknot, its Tristram-Levine signatures vanish. In addition, the self-linking number $L_V(\beta,\beta)$ is zero. Hence $\Xi_5(K_{3,5})=6$. This agrees with the value computed in \cite{cahn2023algorithms} using trisections. 

In~\cite{cahnkju2018genus}, the first author and Kjuchukova introduced the notion of the {\it $p$-dihedral genus} $\frak{g}_p(K,\rho)$ of a knot $K$ with respect to a $p$-coloring $\rho$, which is the minimum genus of a locally flat, orientable surface in $B^4$ over which the coloring $\rho$ extends.  In Theorem 1 of \cite{cahnkju2018genus}, we proved
$$\frak{g}_p(K,\rho)\geq\dfrac{|\Xi_p(K,\rho)|- \text{rk } H_1(M_\rho;\mathbb{Z})}{p-1}-\dfrac{1}{2}.$$
The knot $K$ above is 2-bridge, from which it follows that 1) the coloring $\rho$ in Figure~\ref{K35.fig} is the unique 5-coloring of $K$ up to equivalence, and 2) the corresponding 5-fold dihedral cover $M_\rho$ is homeomorphic to $S^3$.  Substituting $\Xi_5(K,\rho)=6$ into this bound, we conclude $\frak{g}_5(K,\rho)\geq 1.$  Furthermore, as $K$ has a unique 5-coloring, if $K$ were homotopy-ribbon, $\rho$ would have to extend over any choice of homotopy-ribbon disk for $K$ \cite{cahnkjuchukova2017singbranchedcovers,geske2021signatures}. As $\frak{g}_p(K,\rho)\geq 1$, we conclude $K$ is not homotopy-ribbon.  Observe from the symmetrized Seifert form $L_V$ above that the Murasugi signature $\sigma(K)$ is equal to 2, so we can also conclude that the topological 4-genus of $K$ is at least 1 via other methods. A necessary condition for the topological 4-genus and dihedral 4-genus to coincide was given in Theorem 1 of \cite{cahnkju2018genus}. A more thorough comparison of these genera is a worthy topic for further study.

\begin{figure}[htbp]
	\includegraphics[width=4in]{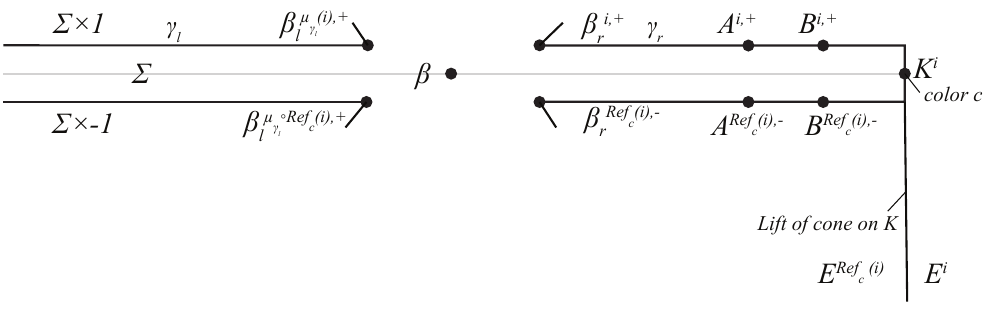}
	\caption{When the basepoint of $K$ lies on an arc of $K$ colored $c$, and the interiours of the anchor paths $\delta_A$, $\delta_B$, and $\gamma_r$ are disjoint from $K$, the superscripts of lifts of $A^\pm$, $B^\pm$, and $\beta^\pm$ that cobound cylinders in $J$ are related by reflection over vertex $c$ of a regular $p$-gon.}
	\label{cylinder_boundaries.fig}
\end{figure}

\begin{figure}[htbp]
	\includegraphics[width=\textwidth]{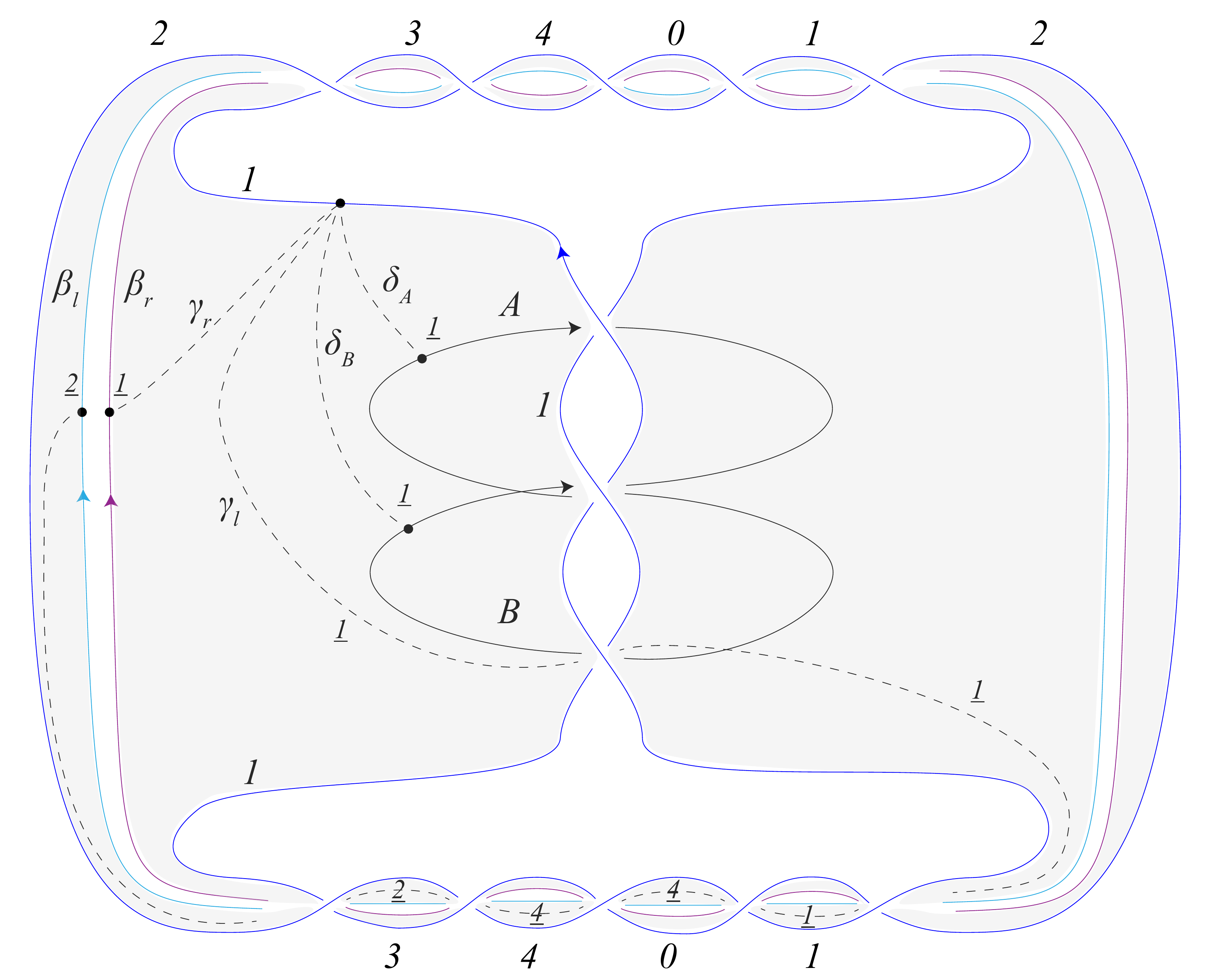}
	\caption{The knot $K_{3,5}$, shown together with a Seifert surface $\Sigma$ cut along a regular neighborhood of a characteristic knot $\beta\subset \Sigma$.  The boundary components of the resulting surface are $K_{3,5}$, and the right and left push-offs $\beta_r$ and $\beta_l$ of $\beta$. The curves $\{A,B\}$ form a basis for $H_1(\Sigma-\beta$).}
	\label{K35.fig}
\end{figure}

\begin{table}
\begin{tabular}{|c|c|c|c|}
	\hline
	& $A+$& $B^+$&$\beta^+$\\
	\hline
	
$A$ &$\begin{pmatrix}0&0&1&0&0\\0&1&0&0&0\\1&0&0&0&0\\0&0&0&0&1\\0&0&0&1&0\end{pmatrix}$&
$\begin{pmatrix}0&0&0&0&0\\0&0&0&0&0\\0&0&0&0&0\\0&0&0&0&0\\0&0&0&0&0\end{pmatrix}$&
$\begin{pmatrix}0&0&0&1&-1\\-1&0&1&-1&1\\1&0&-1&1&-1\\0&0&0&-1&1\\0&0&0&0&0\end{pmatrix}$\\
\hline
$B$ &$\begin{pmatrix}-1&0&0&0&0\\0&-1&0&0&0\\0&0&-1&0&0\\0&0&0&-1&0\\0&0&0&0&-1\end{pmatrix}$&
$\begin{pmatrix}0&0&1&0&0\\0&1&0&0&0\\1&0&0&0&0\\0&0&0&0&1\\0&0&0&1&0\end{pmatrix}$&
$\begin{pmatrix}0&0&0&1&-1\\-1&0&1&-1&1\\1&0&-1&1&-1\\0&0&0&-1&1\\0&0&0&0&0\end{pmatrix}$\\
\hline
$\beta$&$\begin{pmatrix}0&-1&1&0&0\\0&0&0&0&0\\0&1&-1&0&0\\1&-1&1&-1&0\\-1&1&-1&1&0\end{pmatrix}$&
$\begin{pmatrix}0&-1&1&0&0\\0&0&0&0&0\\0&1&-1&0&0\\1&-1&1&-1&0\\-1&1&-1&1&0\end{pmatrix}$&
$\begin{pmatrix}-3&3&-3&3&0\\3&-6&6&-3&0\\-3&6&-6&3&0\\3&-3&3&-3&0\\0&0&0&0&0\end{pmatrix}$\\
\hline	
\end{tabular}
\caption{Linking numbers of the lifts of the curves $A$, $B$, and $\beta$ and their positive push-offs, in the $p$-fold irregular dihedral cover of $K$.}
\label{alllinkingnumbers.tab}
\end{table}
\begin{figure}[htbp]
	\includegraphics[width=4in]{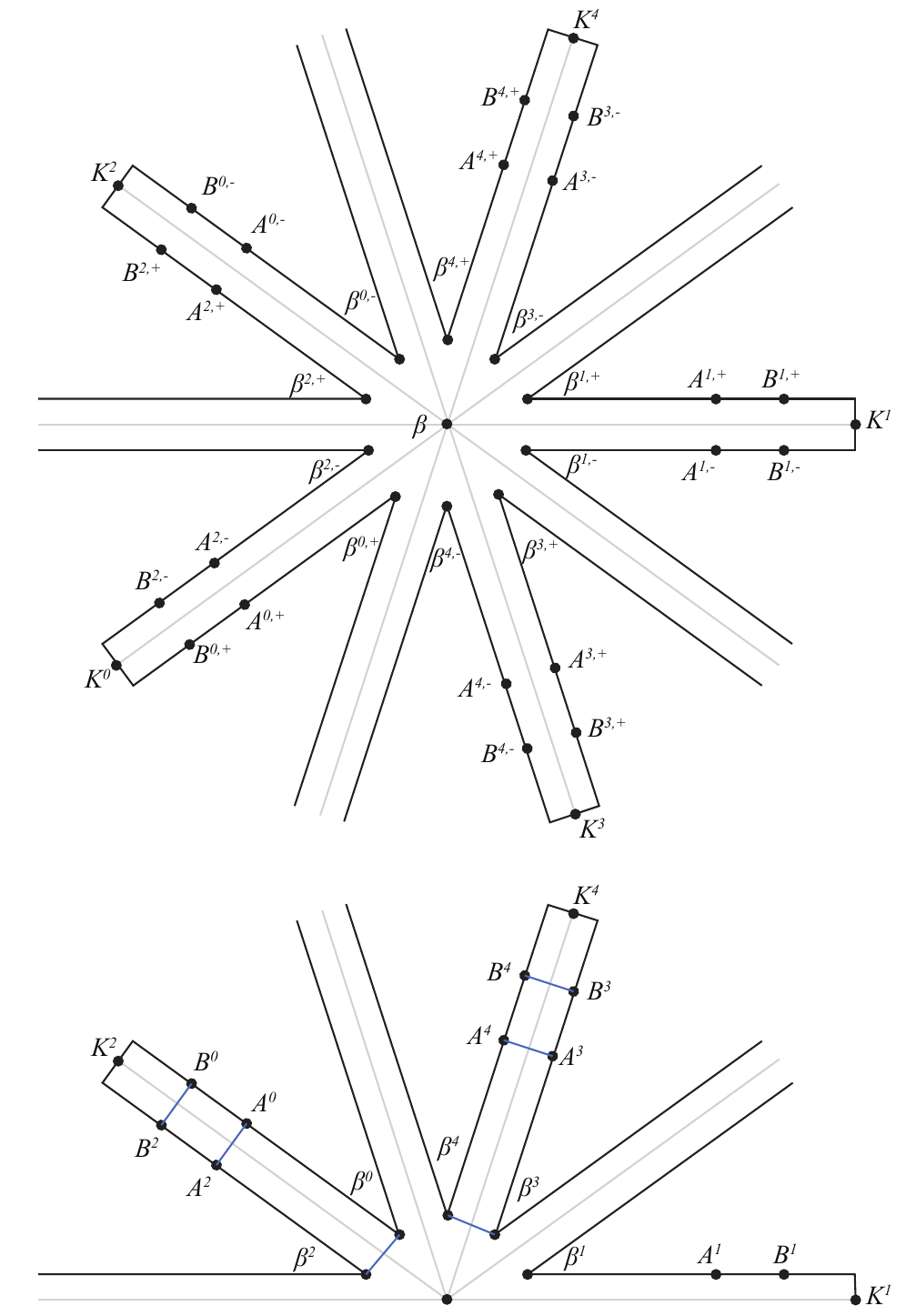}
	
	\caption{The lift of a thickened Seifert surface $\Sigma\times[0,1]$ for $K$ to the $5$-fold cyclic cover of a mod 5 characteristic knot $\beta$ for $K$ (top), and its quotient under an involution (bottom).}
	\label{star.fig}
\end{figure} 

\begin{table}
\begin{tabular}{|c|c|c|c|c|c|c|}
	\hline
	&$A^{0,+}-A^{2,-}$&$A^{3,+}-A^{4,-}$&$B^{0,+}-B^{2,-}$&$B^{3,+}-B^{4,-}$&$\beta^{0,+}-\beta^{2,-}$&$\beta^{3,+}-\beta^{4,-}$\\
	\hline
$A^{0}-A^{2}$	&-2&0&-1&0&-2&0\\
	\hline
$A^{3}-A^{4}$   &0&-2&0&-1&0&-2\\
	\hline
$B^{0}-B^{2}$	&-1&0&-2&0&-2&0\\
	\hline
$B^{3}-B^{4}$	&0&-1&0&-2&0&-2\\
	\hline
$\beta^{0}-\beta^{2}$	&-2&0&-2&0&-3&0\\
	\hline
$\beta^{3}-\beta^{4}$	&0&-2&0&-2&0&-3\\
	\hline
	\end{tabular}
	\caption{The intersection form of $W(K,\beta)$ is represented by the matrix $-M$, where $M$ is the matrix of linking numbers above. }
	\label{int_form.tab}
\end{table}
	
\section{Tabulation of the Dihedral Linking Invariant}\label{tabulation.sec}

Throughout this section all data on knot polynomials is obtained from Knot Info~\cite{knotinfo}.  All data on mutants, specifically the tabulation of groups of mutant knots through 13 crossings, is due to Stoimenov \cite{stoimenovmutants}.
Our computations and tabulations are available at \cite{cahngithubdihedrallinking}.
\subsection{Tabulation}

Let $K$ be a knot with $\text{det}(K)\neq 1$ and $p$ a prime dividing $\text{det}(K)$.  Let $\Sigma_2(K)$ denote the double branched cover of $S^3$ along $K$.  Each $p$-coloring of $K$ corresponds to a nontrivial homomorphism $H_1(\Sigma_2(K);\mathbb{Z})\twoheadrightarrow \mathbb{Z}_p$. We call two such homomorphisms equivalent if they are related by multiplication by a unit in $\mathbb{Z}_p$.  Let $C_p$ denote the set of equivalence classes of $p$-colorings of $K$.  We define the {\it number of $p$-colorings of $K$} to be the number of equivalence classes of such homomorphisms $|C_p|$.  Observe that $|C_p|=\dfrac{p^n-1}{p-1}$, where $n$ is the number of $p^k$-cyclic summands in the elementary divisor decomposition of $H_1(\Sigma_2(K);\mathbb{Z})$.

Recall that given a $p$-coloring $\rho:\pi_1(S^3-K)\twoheadrightarrow \mathbb{Z}_p$ we define the corresponding dihedral linking invariant to be

 $$DLN(K,\rho) = \left\{ \text{lk}(K^i,K^j) | i\neq j \in \{0,1,\dots,(p-1)/2\}\right\}.$$
 
 The linking numbers between the various index-2 curves can be recovered from linking numbers with the index-1 curve $K^0$ (see \cite[p. 168]{CS1984linking}, remark above Theorem II). Therefore it suffices to study 
 
  $$DLN_0(K,\rho) = \left\{ \text{lk}(K^0,K^j) | j \in \{0,1,\dots,(p-1)/2\}\right\}.$$

   Next we define the {\it total dihedral linking invariant} of $K$ to be 

  $$DLN(K)=\bigcup_{p\geq 3 \text{ prime }}\bigcup_{[\rho]\in C_p} \{(p,DLN_0(K,\rho))\}.$$

Our tabulation of $DLN(K)$ for prime knots of crossing number $\leq 13$ is available in the file \verb|data/tabulation_with_colorings.csv| at~\cite{cahngithubdihedrallinking}.  To carry out the tabulation of $DLN$, we extract a list of knots, their braid words, and determinants from Knot Info \cite{knotinfo}. For each prime $p$ with $3\leq p\leq \text{det}(K)$, we check whether $p$ divides the determinant $\text{det}(K)$ of $K$. If so, we solve a system of linear equations (determined from the braid word) mod $p$ to determine all $p$-colorings of $K$, and then choose one representative of each equivalence class of the set of all $p$-colorings of $K$.  We then run \verb|dihedrallinking.py| to compute the values of $DLN_0(K,\rho)$ for each $p$-coloring $\rho$, extracting the overstrand and sign lists from the braid word. The tabulation of the linking invariant for 3-colorable knots up to 11 crossings is due to Perko ~\cite{perko1964thesis}.

We note that the $DLN(K)$ invariant can take on $\dfrac{p^n-1}{p-1}$ distinct values on the $\dfrac{p^n-1}{p-1}$  equivalence classes of $p$-colorings of $K$.  For $p=3$ and $n=2$, i.e., knots with four equivalence classes of 3-colorings, there are exactly 57 prime knots of crossing number $\leq 13$ whose four 3-dihedral linking invariants (each of which is a single element of $\mathbb{Q}\cup\{\infty\}$) are distinct.  The first such knot in the tables is 12\_n388, whose four linking numbers are 0, 2, 4, and 6.

For $p=5$ and $n=2$, i.e., knots with six equivalence classes of 5-colorings, there is just one prime knot of crossing number $\leq 13$, namely 13a\_3773, whose six 5-dihedral linking invariants  (each of which is a two-element subset of $\mathbb{Q}\cup\{\infty\}$) are distinct. These six values of the 5-dihedral linking invariant of 13a\_3773 are:

$$\{-54/5, 86/5\}, \{-34/5, 66/5\}, \{-24/5, 16/5\}, \{-14/5, 46/5\}, \{-4/5, -4/5\}, \{6/55, 166/55\}.$$

Through 13 crossings, there are five additional prime knots such that five of the six 5-dihedral linking invariants are distinct: 13n\_1026, 13n\_1102, 13n\_4173, 13a\_3581, and 13a\_3760. 

For $p=7$ and $n=2$, i.e., knots with eight equivalence classes of 7-colorings, there are no prime knots of crossing number $\leq 13$ whose eight 7-dihedral linking invariants (each of which is a three-element subset of $\mathbb{Q}\cup\{\infty\}$) are all distinct.  There are three knots with eight distinct 7-colorings through 13 crossings that have seven distinct 7-dihedral linking invariants: 13a\_1067, 13a\_2330, 13n\_2694.

\subsection{Comparing the total dihedral linking invariant to other invariants}

The {\it coloring invariant} $\text{col}(K)$ of $K$ is the set of pairs $(p,|C_p|)$, where $p$ is a prime dividing $\text{det}(K)$ and $|C_p|$ is the number of  $p$-colorings of $K$.  We first focus on distinguishing knots with the same coloring invariant, since we include the data of the coloring invariant in the total dihedral linking invariant.

Through 13 crossings, there are 673,562 pairs of knots with the same coloring invariant.  The total dihedral linking invariant distinguishes 663,344, or 98.48\%, of those pairs.

\begin{table}[htbp]
	\begin{tabular}{|l|l|l|l|}
		\hline
		Polynomial& Pairs with same $\text{col}(K)$  & Pairs distinguished &Percent of pairs \\
		Invariant (PI)&and same PI&by $\text{DLN}(K)$&distinguished by $\text{DLN}(K)$\\
		\hline
		Alexander& 17885&15886&88.8\%\\
		\hline
		Jones&4268&3179&74.5\%\\
		\hline
		HOMFLY-PT&2207&1163&52.7\%\\
		\hline
		Kauffman &1136&109&9.56\%\\
		\hline
		Khovanov unreduced $\mathbb{Z}$&4658&3568&76.6\%\\
		\hline
		\end{tabular}
		\caption{Counting pairs of knots of crossing number $\leq 13$ with equal coloring invariant and equal polynomial invariant that are distinguished by the total dihedral linking invariant.}\label{polynomial_all.tab}
\end{table}

\begin{table}[htbp]
	\begin{tabular}{|l|l|l|l|}
		\hline
		Polynomial&  {\it Non-mutant} pairs with   & Pairs distinguished &Percent of pairs \\
		Invariant (PI)& same $\text{col}(K)$ and same PI& by $\text{DLN}(K)$&distinguished by $\text{DLN}(K)$\\
		\hline
		Alexander&16760&15811&94.3\%\\
		\hline
		Jones&3244&3179&98.0\%\\
		\hline
		HOMFLY-PT&1183&1163&98.3\%\\
		\hline
		Kauffman&112&109& 97.3\%\\
		\hline
		Khovanov unreduced $\mathbb{Z}$&3634&3568&98.2\%\\
		\hline
		\end{tabular}
		\caption{Counting pairs of {\it non-mutant} knots of crossing number $\leq 13$ with equal coloring invariant and equal polynomial invariant that are distinguished by the total dihedral linking invariant.}
		\label{polynomial_non_mutant.tab}
\end{table}

We now compare the dihedral linking invariant to various polynomial invariants.  Table~\ref{polynomial_all.tab} demonstrates the ability of the dihedral linking invariant to distinguish knots that have the same coloring invariant, and the same Alexander, Jones, HOMFLY-PT, Kauffman, or unreduced Khovanov $\mathbb{Z}$-polynomial. 

Upon further investigation, many of the above pairs that the dihedral linking invariant does not distinguish are in fact mutants.   Therefore, we also test the ability of the dihedral linking invariant to distinguish {\it non-mutant} knots with the same coloring invariant and polynomial invariant.  The results of this experiment are shown in Table~\ref{polynomial_non_mutant.tab}.  The dihedral invariant is remarkably strong in this context, distinguishing between 94\% and 98\% of such pairs, depending on the polynomial chosen.  

While we have not found any mutant pairs distinguished by the dihedral linking invariant up to crossing number 13, we do not know whether the dihedral linking invariant is mutation-invariant in general. Perko has shown that linking invariants in other branched covers can distinguish mutants \cite{perko2016historical}.

In Table~\ref{same_homfly_and_dln.tab}, we list the 20 pairs of non-mutant knots of crossing number $\leq 13$ that have the same HOMFLY-PT polynomial and the same dihedral linking invariant (and hence the same coloring invariant).  Similarly, there are three pairs of knots of crossing number $\leq 13$ that have the same Kauffman polynomial and the same dihedral linking invariant, and are not mutants: 12a\_325 and 12a\_711; 12a\_390 and 12a\_672; and 13a\_1312 and 13a\_1592. These three pairs also have the same HOMFLY-PT polynomial, and so also appear in Table~\ref{same_homfly_and_dln.tab}.

We display selected pairs of knots with the same HOMFLY-PT polynomial and coloring invariant that are distinguished by the dihedral linking invariant in Tables~\ref{same_homfly_diff_dln1.tab} and \ref{same_homfly_diff_dln2.tab}. Namely, we display data for knots with crossing number $\leq 12$ such that the largest prime dividing $\text{det}(K)$ is 7. We do the same for the Kauffman polynomial in Table~\ref{same_kauffman_diff_dln.tab}.

\renewcommand{\arraystretch}{1.5}
\begin{table}[htbp]
\begin{tabular}{|c|c|}
	\hline
	$(12a\_325, 12a\_711)$&$\{3: [[-6/5]]\}$\\
	\hline
	$(12a\_390, 12a\_672)$&	$\{3: [[0]]\}$\\
	\hline
	$(12a\_606, 12a\_715)$&	$\{13: [[-2, -2, -2, 2, 2, 2]]\}$\\
	\hline
	$(12n\_144, 12n\_507)$&	$\{3: [[-2]], 5: [[0, 0]]\}$\\
	\hline
	$(12n\_210, 13n\_2051)$&$\{\}$\\
	\hline
	$(12n\_214, 13n\_2051)$&$\{\}$\\
	\hline
	$(12n\_399, 13n\_1922)$&	$\{3: [[2]]\}$\\
	\hline
	$(13a\_4342, 13a\_659)$&
	$\{3: [[-6]]\}$\\
	\hline
	$(13a\_3013, 13a\_737)$&	$\{5: [[-2, 2]], 7: [[0, 0, 0]]\}$\\
	\hline
	$(13a\_2825, 13a\_812)$&	$\{3: [[6]]\}$\\
	\hline
	$(13a\_1312, 13a\_1592)$&	$\{3: [[0]], 5: [[-2, 2]]\}$\\
	\hline
	$(13n\_4188, 13n\_436)$&	$\{3: [[0]]\}$\\
	\hline
	$(13n\_4188, 13n\_447)$&	$\{3: [[0]]\}$\\
	\hline
	$(13n\_4188, 13n\_636)$&	$\{3: [[0]]\}$\\
	\hline
	$(13n\_2124, 13n\_561)$&	$\{3: [[2]], 5: [[0, 0]]\}$\\
	\hline
	$(13n\_1132, 13n\_3246)$&	$\{3: [[-2]]\}$\\
	\hline
	$(13n\_1357, 13n\_1645)$&	$\{3: [[2]]\}$\\
	\hline
	$(13n\_1995, 13n\_4472)$&	$\{\}$\\
	\hline
	$(13n\_2656, 13n\_3980)$&	$\{\}$\\
	\hline
	$(13n\_3074, 13n\_4384)$&	$\{3: [[-2/3], [-2/3], [2/3], [\infty]]\}$\\
	\hline
\end{tabular}
\caption{The total dihedral linking invariant distinguishes all but the above 20 of the 1134 pairs of non-mutant knots with the same HOMFLY-PT polynomial and the same coloring invariant, through 13 crossings. Four such pairs have determinant 1 and hence no dihedral linking invariant. }
\label{same_homfly_and_dln.tab}
\end{table}

\newpage

\begin{table}\tiny
\begin{tabular}{|c|c|}
\hline \multicolumn{2}{|l|}{ \tiny $ 2 \, v^{6} z^{8} + v^{10} - 3 \, v^{8} - {\left(3 \, v^{8} - 10 \, v^{6} + 2 \, v^{4}\right)} z^{6} + v^{6} + {\left(v^{10} - 10 \, v^{8} + 18 \, v^{6} - 5 \, v^{4}\right)} z^{4} + 2 \, v^{4} + {\left(2 \, v^{10} - 10 \, v^{8} + 12 \, v^{6} - v^{4}\right)} z^{2} $} \\ 
\hline $12n\_749$&$7\_1$\\ \hline 
7-coloring 1: -6/29, 14/29, 26/29&7-coloring 1: 2, 2, 2\\ 
\hline \multicolumn{2}{|l|}{ \tiny $ 2 \, v^{6} z^{8} + v^{10} - 3 \, v^{8} - {\left(3 \, v^{8} - 10 \, v^{6} + 2 \, v^{4}\right)} z^{6} + v^{6} + {\left(v^{10} - 10 \, v^{8} + 18 \, v^{6} - 5 \, v^{4}\right)} z^{4} + 2 \, v^{4} + {\left(2 \, v^{10} - 10 \, v^{8} + 12 \, v^{6} - v^{4}\right)} z^{2} $} \\ 
\hline $10\_156$&$8\_16$\\ \hline 
5-coloring 1: -10, 18&5-coloring 1: -18, 6\\ 
7-coloring 1: -46/13, -38/13, 54/13&7-coloring 1: -18, 6, 22\\ 
\hline \multicolumn{2}{|l|}{ \tiny $ 2 \, v^{6} z^{8} + v^{10} - 3 \, v^{8} - {\left(3 \, v^{8} - 10 \, v^{6} + 2 \, v^{4}\right)} z^{6} + v^{6} + {\left(v^{10} - 10 \, v^{8} + 18 \, v^{6} - 5 \, v^{4}\right)} z^{4} + 2 \, v^{4} + {\left(2 \, v^{10} - 10 \, v^{8} + 12 \, v^{6} - v^{4}\right)} z^{2} $} \\ 
\hline $10\_35$&$12n\_48$\\ \hline 
7-coloring 1: -6, 2, 10&7-coloring 1: -2, 2, 2\\ 
\hline \multicolumn{2}{|l|}{ \tiny $ 2 \, v^{6} z^{8} + v^{10} - 3 \, v^{8} - {\left(3 \, v^{8} - 10 \, v^{6} + 2 \, v^{4}\right)} z^{6} + v^{6} + {\left(v^{10} - 10 \, v^{8} + 18 \, v^{6} - 5 \, v^{4}\right)} z^{4} + 2 \, v^{4} + {\left(2 \, v^{10} - 10 \, v^{8} + 12 \, v^{6} - v^{4}\right)} z^{2} $} \\ 
\hline $10\_40$&$12n\_412$\\ \hline 
3-coloring 1: -2&3-coloring 1: -2\\ 
5-coloring 1: -6, 18&5-coloring 1: -18/11, 30/11\\ 
\hline \multicolumn{2}{|l|}{ \tiny $ 2 \, v^{6} z^{8} + v^{10} - 3 \, v^{8} - {\left(3 \, v^{8} - 10 \, v^{6} + 2 \, v^{4}\right)} z^{6} + v^{6} + {\left(v^{10} - 10 \, v^{8} + 18 \, v^{6} - 5 \, v^{4}\right)} z^{4} + 2 \, v^{4} + {\left(2 \, v^{10} - 10 \, v^{8} + 12 \, v^{6} - v^{4}\right)} z^{2} $} \\ 
\hline $10\_106$&$12n\_369$\\ \hline 
3-coloring 1: 2&3-coloring 1: 2\\ 
5-coloring 1: -6, 18&5-coloring 1: -30/11, 18/11\\ 
\hline \multicolumn{2}{|l|}{ \tiny $ 2 \, v^{6} z^{8} + v^{10} - 3 \, v^{8} - {\left(3 \, v^{8} - 10 \, v^{6} + 2 \, v^{4}\right)} z^{6} + v^{6} + {\left(v^{10} - 10 \, v^{8} + 18 \, v^{6} - 5 \, v^{4}\right)} z^{4} + 2 \, v^{4} + {\left(2 \, v^{10} - 10 \, v^{8} + 12 \, v^{6} - v^{4}\right)} z^{2} $} \\ 
\hline $10\_141$&$12n\_438$\\ \hline 
3-coloring 1: 4&3-coloring 1: 14/5\\ 
7-coloring 1: -14, -2, 10&7-coloring 1: -50/13, 18/13, 74/13\\ 
\hline \multicolumn{2}{|l|}{ \tiny $ 2 \, v^{6} z^{8} + v^{10} - 3 \, v^{8} - {\left(3 \, v^{8} - 10 \, v^{6} + 2 \, v^{4}\right)} z^{6} + v^{6} + {\left(v^{10} - 10 \, v^{8} + 18 \, v^{6} - 5 \, v^{4}\right)} z^{4} + 2 \, v^{4} + {\left(2 \, v^{10} - 10 \, v^{8} + 12 \, v^{6} - v^{4}\right)} z^{2} $} \\ 
\hline $11a\_104$&$11a\_168$\\ \hline 
5-coloring 1: -10, 10/3&5-coloring 1: -50, 130\\ 
\hline \multicolumn{2}{|l|}{ \tiny $ 2 \, v^{6} z^{8} + v^{10} - 3 \, v^{8} - {\left(3 \, v^{8} - 10 \, v^{6} + 2 \, v^{4}\right)} z^{6} + v^{6} + {\left(v^{10} - 10 \, v^{8} + 18 \, v^{6} - 5 \, v^{4}\right)} z^{4} + 2 \, v^{4} + {\left(2 \, v^{10} - 10 \, v^{8} + 12 \, v^{6} - v^{4}\right)} z^{2} $} \\ 
\hline $11n\_21$&$11n\_4$\\ \hline 
7-coloring 1: -10, -2, 6&7-coloring 1: -18, 6, 26\\ 
\hline \multicolumn{2}{|l|}{ \tiny $ 2 \, v^{6} z^{8} + v^{10} - 3 \, v^{8} - {\left(3 \, v^{8} - 10 \, v^{6} + 2 \, v^{4}\right)} z^{6} + v^{6} + {\left(v^{10} - 10 \, v^{8} + 18 \, v^{6} - 5 \, v^{4}\right)} z^{4} + 2 \, v^{4} + {\left(2 \, v^{10} - 10 \, v^{8} + 12 \, v^{6} - v^{4}\right)} z^{2} $} \\ 
\hline $11n\_4$&$12n\_24$\\ \hline 
7-coloring 1: -18, 6, 26&7-coloring 1: -10, 2, 6\\ 
\hline \multicolumn{2}{|l|}{ \tiny $ 2 \, v^{6} z^{8} + v^{10} - 3 \, v^{8} - {\left(3 \, v^{8} - 10 \, v^{6} + 2 \, v^{4}\right)} z^{6} + v^{6} + {\left(v^{10} - 10 \, v^{8} + 18 \, v^{6} - 5 \, v^{4}\right)} z^{4} + 2 \, v^{4} + {\left(2 \, v^{10} - 10 \, v^{8} + 12 \, v^{6} - v^{4}\right)} z^{2} $} \\ 
\hline $11n\_21$&$12n\_24$\\ \hline 
7-coloring 1: -10, -2, 6&7-coloring 1: -10, 2, 6\\ 
\hline \multicolumn{2}{|l|}{ \tiny $ 2 \, v^{6} z^{8} + v^{10} - 3 \, v^{8} - {\left(3 \, v^{8} - 10 \, v^{6} + 2 \, v^{4}\right)} z^{6} + v^{6} + {\left(v^{10} - 10 \, v^{8} + 18 \, v^{6} - 5 \, v^{4}\right)} z^{4} + 2 \, v^{4} + {\left(2 \, v^{10} - 10 \, v^{8} + 12 \, v^{6} - v^{4}\right)} z^{2} $} \\ 
\hline $11n\_132$&$11n\_50$\\ \hline 
5-coloring 1: -2/11, 10/11&5-coloring 1: -2/3, 2\\ 
\hline \multicolumn{2}{|l|}{ \tiny $ 2 \, v^{6} z^{8} + v^{10} - 3 \, v^{8} - {\left(3 \, v^{8} - 10 \, v^{6} + 2 \, v^{4}\right)} z^{6} + v^{6} + {\left(v^{10} - 10 \, v^{8} + 18 \, v^{6} - 5 \, v^{4}\right)} z^{4} + 2 \, v^{4} + {\left(2 \, v^{10} - 10 \, v^{8} + 12 \, v^{6} - v^{4}\right)} z^{2} $} \\ 
\hline $11n\_58$&$12n\_449$\\ \hline 
5-coloring 1: -6, 10&5-coloring 1: -36, 16\\ 
7-coloring 1: -22, -6, 14&7-coloring 1: -70, -14, 46\\ 
\hline \multicolumn{2}{|l|}{ \tiny $ 2 \, v^{6} z^{8} + v^{10} - 3 \, v^{8} - {\left(3 \, v^{8} - 10 \, v^{6} + 2 \, v^{4}\right)} z^{6} + v^{6} + {\left(v^{10} - 10 \, v^{8} + 18 \, v^{6} - 5 \, v^{4}\right)} z^{4} + 2 \, v^{4} + {\left(2 \, v^{10} - 10 \, v^{8} + 12 \, v^{6} - v^{4}\right)} z^{2} $} \\ 
\hline $12a\_28$&$12a\_799$\\ \hline 
5-coloring 1: 0, 0&5-coloring 1: -6, 14\\ 
7-coloring 1: -2, 2, 2&7-coloring 1: -1186/503, -686/503, 2502/503\\ 
\hline \multicolumn{2}{|l|}{ \tiny $ 2 \, v^{6} z^{8} + v^{10} - 3 \, v^{8} - {\left(3 \, v^{8} - 10 \, v^{6} + 2 \, v^{4}\right)} z^{6} + v^{6} + {\left(v^{10} - 10 \, v^{8} + 18 \, v^{6} - 5 \, v^{4}\right)} z^{4} + 2 \, v^{4} + {\left(2 \, v^{10} - 10 \, v^{8} + 12 \, v^{6} - v^{4}\right)} z^{2} $} \\ 
\hline $12a\_210$&$12a\_623$\\ \hline 
3-coloring 1: 6&3-coloring 1: -42\\ 
7-coloring 1: -4, 4, 4&7-coloring 1: -2, 2, 2\\ 
\hline \multicolumn{2}{|l|}{ \tiny $ 2 \, v^{6} z^{8} + v^{10} - 3 \, v^{8} - {\left(3 \, v^{8} - 10 \, v^{6} + 2 \, v^{4}\right)} z^{6} + v^{6} + {\left(v^{10} - 10 \, v^{8} + 18 \, v^{6} - 5 \, v^{4}\right)} z^{4} + 2 \, v^{4} + {\left(2 \, v^{10} - 10 \, v^{8} + 12 \, v^{6} - v^{4}\right)} z^{2} $} \\ 
\hline $12a\_310$&$12a\_388$\\ \hline 
5-coloring 1: -2, 2&5-coloring 1: -2, 2\\ 
7-coloring 1: -350/13, -110/13, 250/13&7-coloring 1: -210/43, -30/43, 70/43\\ 
\hline
\end{tabular}
\label{same_homfly_diff_dln1.tab}
\caption{Knots with the same HOMFLY-PT polynomial and coloring invariant that are distinguished by the dihedral linking invariant. We restrict to knots of crossing number $\leq 12$ with determinant divisible only by primes $\leq 7$.}
\end{table}
\newpage
\begin{table}\tiny
\begin{tabular}{|c|c|}
\hline \multicolumn{2}{|l|}{ \tiny $ 2 \, v^{6} z^{8} + v^{10} - 3 \, v^{8} - {\left(3 \, v^{8} - 10 \, v^{6} + 2 \, v^{4}\right)} z^{6} + v^{6} + {\left(v^{10} - 10 \, v^{8} + 18 \, v^{6} - 5 \, v^{4}\right)} z^{4} + 2 \, v^{4} + {\left(2 \, v^{10} - 10 \, v^{8} + 12 \, v^{6} - v^{4}\right)} z^{2} $} \\ 
\hline $12a\_435$&$12a\_990$\\ \hline 
3-coloring 1: -4&3-coloring 1: -2\\ 
3-coloring 2: 0&3-coloring 2: 0\\ 
3-coloring 3: 0&3-coloring 3: 0\\ 
3-coloring 4: 4&3-coloring 4: 2\\ 
5-coloring 1: -10, 10&5-coloring 1: 0, 0\\ 
\hline \multicolumn{2}{|l|}{ \tiny $ 2 \, v^{6} z^{8} + v^{10} - 3 \, v^{8} - {\left(3 \, v^{8} - 10 \, v^{6} + 2 \, v^{4}\right)} z^{6} + v^{6} + {\left(v^{10} - 10 \, v^{8} + 18 \, v^{6} - 5 \, v^{4}\right)} z^{4} + 2 \, v^{4} + {\left(2 \, v^{10} - 10 \, v^{8} + 12 \, v^{6} - v^{4}\right)} z^{2} $} \\ 
\hline $12a\_511$&$12a\_988$\\ \hline 
5-coloring 1: -10, 30&5-coloring 1: -110/19, 50/19\\ 
\hline \multicolumn{2}{|l|}{ \tiny $ 2 \, v^{6} z^{8} + v^{10} - 3 \, v^{8} - {\left(3 \, v^{8} - 10 \, v^{6} + 2 \, v^{4}\right)} z^{6} + v^{6} + {\left(v^{10} - 10 \, v^{8} + 18 \, v^{6} - 5 \, v^{4}\right)} z^{4} + 2 \, v^{4} + {\left(2 \, v^{10} - 10 \, v^{8} + 12 \, v^{6} - v^{4}\right)} z^{2} $} \\ 
\hline $12a\_513$&$12a\_989$\\ \hline 
5-coloring 1: -14/3, 14&5-coloring 1: -238/19, 98/19\\ 
7-coloring 1: -2, -2, 2&7-coloring 1: -2, -2, 2\\ 
\hline \multicolumn{2}{|l|}{ \tiny $ 2 \, v^{6} z^{8} + v^{10} - 3 \, v^{8} - {\left(3 \, v^{8} - 10 \, v^{6} + 2 \, v^{4}\right)} z^{6} + v^{6} + {\left(v^{10} - 10 \, v^{8} + 18 \, v^{6} - 5 \, v^{4}\right)} z^{4} + 2 \, v^{4} + {\left(2 \, v^{10} - 10 \, v^{8} + 12 \, v^{6} - v^{4}\right)} z^{2} $} \\ 
\hline $12a\_587$&$12a\_977$\\ \hline 
5-coloring 1: 10/3, 6&5-coloring 1: 42/11, 54/11\\ 
7-coloring 1: -2, 2, 6&7-coloring 1: -1374/251, 322/251, 1622/251\\ 
\hline \multicolumn{2}{|l|}{ \tiny $ 2 \, v^{6} z^{8} + v^{10} - 3 \, v^{8} - {\left(3 \, v^{8} - 10 \, v^{6} + 2 \, v^{4}\right)} z^{6} + v^{6} + {\left(v^{10} - 10 \, v^{8} + 18 \, v^{6} - 5 \, v^{4}\right)} z^{4} + 2 \, v^{4} + {\left(2 \, v^{10} - 10 \, v^{8} + 12 \, v^{6} - v^{4}\right)} z^{2} $} \\ 
\hline $12a\_701$&$12a\_987$\\ \hline 
3-coloring 1: -4&3-coloring 1: -4\\ 
3-coloring 2: -4&3-coloring 2: -4\\ 
3-coloring 3: -2&3-coloring 3: -2\\ 
3-coloring 4: -2&3-coloring 4: -2\\ 
5-coloring 1: -18, 6&5-coloring 1: -6, 6\\ 
\hline \multicolumn{2}{|l|}{ \tiny $ 2 \, v^{6} z^{8} + v^{10} - 3 \, v^{8} - {\left(3 \, v^{8} - 10 \, v^{6} + 2 \, v^{4}\right)} z^{6} + v^{6} + {\left(v^{10} - 10 \, v^{8} + 18 \, v^{6} - 5 \, v^{4}\right)} z^{4} + 2 \, v^{4} + {\left(2 \, v^{10} - 10 \, v^{8} + 12 \, v^{6} - v^{4}\right)} z^{2} $} \\ 
\hline $12a\_1226$&$12a\_916$\\ \hline 
3-coloring 1: -2&3-coloring 1: -2\\ 
7-coloring 1: -54, 18, 78&7-coloring 1: -30/13, -18/13, 42/13\\ 
\hline \multicolumn{2}{|l|}{ \tiny $ 2 \, v^{6} z^{8} + v^{10} - 3 \, v^{8} - {\left(3 \, v^{8} - 10 \, v^{6} + 2 \, v^{4}\right)} z^{6} + v^{6} + {\left(v^{10} - 10 \, v^{8} + 18 \, v^{6} - 5 \, v^{4}\right)} z^{4} + 2 \, v^{4} + {\left(2 \, v^{10} - 10 \, v^{8} + 12 \, v^{6} - v^{4}\right)} z^{2} $} \\ 
\hline $12a\_1136$&$12a\_1224$\\ \hline 
3-coloring 1: 2&3-coloring 1: 2\\ 
7-coloring 1: -30, -6, 18&7-coloring 1: -6, 6, 6\\ 
\hline \multicolumn{2}{|l|}{ \tiny $ 2 \, v^{6} z^{8} + v^{10} - 3 \, v^{8} - {\left(3 \, v^{8} - 10 \, v^{6} + 2 \, v^{4}\right)} z^{6} + v^{6} + {\left(v^{10} - 10 \, v^{8} + 18 \, v^{6} - 5 \, v^{4}\right)} z^{4} + 2 \, v^{4} + {\left(2 \, v^{10} - 10 \, v^{8} + 12 \, v^{6} - v^{4}\right)} z^{2} $} \\ 
\hline $12n\_413$&$12n\_597$\\ \hline 
3-coloring 1: 2&3-coloring 1: 2\\ 
5-coloring 1: -18/31, 66/31&5-coloring 1: 6/19, 18/19\\ 
\hline \multicolumn{2}{|l|}{ \tiny $ 2 \, v^{6} z^{8} + v^{10} - 3 \, v^{8} - {\left(3 \, v^{8} - 10 \, v^{6} + 2 \, v^{4}\right)} z^{6} + v^{6} + {\left(v^{10} - 10 \, v^{8} + 18 \, v^{6} - 5 \, v^{4}\right)} z^{4} + 2 \, v^{4} + {\left(2 \, v^{10} - 10 \, v^{8} + 12 \, v^{6} - v^{4}\right)} z^{2} $} \\ 
\hline $12n\_201$&$12n\_551$\\ \hline 
3-coloring 1: 0&3-coloring 1: 0\\ 
5-coloring 1: -10/41, 162/41&5-coloring 1: -50/79, 286/79\\ 
7-coloring 1: -6, 2, 14&7-coloring 1: -54/13, 46/13, 90/13\\ 
\hline \multicolumn{2}{|l|}{ \tiny $ 2 \, v^{6} z^{8} + v^{10} - 3 \, v^{8} - {\left(3 \, v^{8} - 10 \, v^{6} + 2 \, v^{4}\right)} z^{6} + v^{6} + {\left(v^{10} - 10 \, v^{8} + 18 \, v^{6} - 5 \, v^{4}\right)} z^{4} + 2 \, v^{4} + {\left(2 \, v^{10} - 10 \, v^{8} + 12 \, v^{6} - v^{4}\right)} z^{2} $} \\ 
\hline $12n\_244$&$12n\_338$\\ \hline 
5-coloring 1: 2, 6&5-coloring 1: -2, 6\\ 
\hline \multicolumn{2}{|l|}{ \tiny $ 2 \, v^{6} z^{8} + v^{10} - 3 \, v^{8} - {\left(3 \, v^{8} - 10 \, v^{6} + 2 \, v^{4}\right)} z^{6} + v^{6} + {\left(v^{10} - 10 \, v^{8} + 18 \, v^{6} - 5 \, v^{4}\right)} z^{4} + 2 \, v^{4} + {\left(2 \, v^{10} - 10 \, v^{8} + 12 \, v^{6} - v^{4}\right)} z^{2} $} \\ 
\hline $12n\_417$&$12n\_694$\\ \hline 
5-coloring 1: 0, 8&5-coloring 1: -34/19, 50/19\\ 
7-coloring 1: 18/43, 66/43, 118/43&7-coloring 1: -2/13, 38/13, 46/13\\ 
\hline \multicolumn{2}{|l|}{ \tiny $ 2 \, v^{6} z^{8} + v^{10} - 3 \, v^{8} - {\left(3 \, v^{8} - 10 \, v^{6} + 2 \, v^{4}\right)} z^{6} + v^{6} + {\left(v^{10} - 10 \, v^{8} + 18 \, v^{6} - 5 \, v^{4}\right)} z^{4} + 2 \, v^{4} + {\left(2 \, v^{10} - 10 \, v^{8} + 12 \, v^{6} - v^{4}\right)} z^{2} $} \\ 
\hline $12n\_420$&$12n\_636$\\ \hline 
3-coloring 1: -22/9&3-coloring 1: -2/3\\ 
3-coloring 2: -2/3&3-coloring 2: -2/3\\ 
3-coloring 3: $\infty$&3-coloring 3: -2/3\\ 
3-coloring 4: $\infty$&3-coloring 4: -2/3\\ 
\hline
	\end{tabular}
	\label{same_homfly_diff_dln2.tab}
	\caption{Knots with the same HOMFLY-PT polynomial and coloring invariant that are distinguished by the total dihedral linking invariant (continued). We restrict to knots of crossing number $\leq 12$ with determinant divisible only by primes $\leq 7$.}
	\end{table}

\begin{table}\tiny
	\begin{tabular}{|c|c|}
	\hline \multicolumn{2}{|l|}{ \tiny $ 3 \, {\left(a + \frac{1}{a}\right)} z^{11} + 3 \, {\left(3 \, a^{2} + \frac{2}{a^{2}} + 5\right)} z^{10} + {\left(10 \, a^{3} + 7 \, a + \frac{1}{a} + \frac{4}{a^{3}}\right)} z^{9} + {\left(5 \, a^{4} - 27 \, a^{2} - \frac{23}{a^{2}} + \frac{1}{a^{4}} - 56\right)} z^{8} + {\left(a^{5} - 31 \, a^{3} - 56 \, a - \frac{41}{a} - \frac{17}{a^{3}}\right)} z^{7}$} \\
	\multicolumn{2}{|l|}{$- {\left(6 \, a^{4} - 26 \, a^{2} - \frac{23}{a^{2}} + \frac{4}{a^{4}} - 59\right)} z^{6} + {\left(6 \, a^{5} + 38 \, a^{3} + 73 \, a + \frac{64}{a} + \frac{23}{a^{3}}\right)} z^{5} + {\left(4 \, a^{6} + a^{4} - 20 \, a^{2} - \frac{5}{a^{2}} + \frac{5}{a^{4}} - 27\right)} z^{4} $}\\
\multicolumn{2}{|l|}{$	- 2 \, a^{4} + {\left(a^{7} - 6 \, a^{5} - 25 \, a^{3} - 38 \, a - \frac{32}{a} - \frac{12}{a^{3}}\right)} z^{3} - {\left(a^{6} - 3 \, a^{4} - 12 \, a^{2} - \frac{1}{a^{2}} + \frac{2}{a^{4}} - 11\right)} z^{2} - 5 \, a^{2} + {\left(3 \, a^{5} + 7 \, a^{3} + 8 \, a + \frac{6}{a} + \frac{2}{a^{3}}\right)} z - \frac{1}{a^{2}} - 3 $} \\ 
	\hline $11n\_21$&$11n\_4$\\ \hline 
	7-coloring 1: -10, -2, 6&7-coloring 1: -18, 6, 26\\ 
	\hline \multicolumn{2}{|l|}{ \tiny $ 3 \, {\left(a + \frac{1}{a}\right)} z^{11} + 3 \, {\left(3 \, a^{2} + \frac{2}{a^{2}} + 5\right)} z^{10} + {\left(10 \, a^{3} + 7 \, a + \frac{1}{a} + \frac{4}{a^{3}}\right)} z^{9} + {\left(5 \, a^{4} - 27 \, a^{2} - \frac{23}{a^{2}} + \frac{1}{a^{4}} - 56\right)} z^{8} + {\left(a^{5} - 31 \, a^{3} - 56 \, a - \frac{41}{a} - \frac{17}{a^{3}}\right)} z^{7} $}\\
	\multicolumn{2}{|l|}{$- {\left(6 \, a^{4} - 26 \, a^{2} - \frac{23}{a^{2}} + \frac{4}{a^{4}} - 59\right)} z^{6} + {\left(6 \, a^{5} + 38 \, a^{3} + 73 \, a + \frac{64}{a} + \frac{23}{a^{3}}\right)} z^{5} + {\left(4 \, a^{6} + a^{4} - 20 \, a^{2} - \frac{5}{a^{2}} + \frac{5}{a^{4}} - 27\right)} z^{4} - 2 \, a^{4}$}\\
	\multicolumn{2}{|l|}{$ + {\left(a^{7} - 6 \, a^{5} - 25 \, a^{3} - 38 \, a - \frac{32}{a} - \frac{12}{a^{3}}\right)} z^{3} - {\left(a^{6} - 3 \, a^{4} - 12 \, a^{2} - \frac{1}{a^{2}} + \frac{2}{a^{4}} - 11\right)} z^{2} - 5 \, a^{2} + {\left(3 \, a^{5} + 7 \, a^{3} + 8 \, a + \frac{6}{a} + \frac{2}{a^{3}}\right)} z - \frac{1}{a^{2}} - 3 $} \\ 
	\hline $12a\_310$&$12a\_388$\\ \hline 
	5-coloring 1: -2, 2&5-coloring 1: -2, 2\\ 
	7-coloring 1: -350/13, -110/13, 250/13&7-coloring 1: -210/43, -30/43, 70/43\\ 
	\hline \multicolumn{2}{|l|}{ \tiny $ 3 \, {\left(a + \frac{1}{a}\right)} z^{11} + 3 \, {\left(3 \, a^{2} + \frac{2}{a^{2}} + 5\right)} z^{10} + {\left(10 \, a^{3} + 7 \, a + \frac{1}{a} + \frac{4}{a^{3}}\right)} z^{9} + {\left(5 \, a^{4} - 27 \, a^{2} - \frac{23}{a^{2}} + \frac{1}{a^{4}} - 56\right)} z^{8} + {\left(a^{5} - 31 \, a^{3} - 56 \, a - \frac{41}{a} - \frac{17}{a^{3}}\right)} z^{7}$}\\
	\multicolumn{2}{|l|}{$ - {\left(6 \, a^{4} - 26 \, a^{2} - \frac{23}{a^{2}} + \frac{4}{a^{4}} - 59\right)} z^{6} + {\left(6 \, a^{5} + 38 \, a^{3} + 73 \, a + \frac{64}{a} + \frac{23}{a^{3}}\right)} z^{5} + {\left(4 \, a^{6} + a^{4} - 20 \, a^{2} - \frac{5}{a^{2}} + \frac{5}{a^{4}} - 27\right)} z^{4} - 2 $}\\
	\multicolumn{2}{|l|}{$ \, a^{4} + {\left(a^{7} - 6 \, a^{5} - 25 \, a^{3} - 38 \, a - \frac{32}{a} - \frac{12}{a^{3}}\right)} z^{3} - {\left(a^{6} - 3 \, a^{4} - 12 \, a^{2} - \frac{1}{a^{2}} + \frac{2}{a^{4}} - 11\right)} z^{2} - 5 \, a^{2} + {\left(3 \, a^{5} + 7 \, a^{3} + 8 \, a + \frac{6}{a} + \frac{2}{a^{3}}\right)} z - \frac{1}{a^{2}} - 3 $} \\ 
	\hline $12n\_420$&$12n\_636$\\ \hline 
	3-coloring 1: -22/9&3-coloring 1: -2/3\\ 
	3-coloring 2: -2/3&3-coloring 2: -2/3\\ 
	3-coloring 3: $\infty$&3-coloring 3: -2/3\\ 
	3-coloring 4: $\infty$&3-coloring 4: -2/3\\
\hline
\end{tabular}
\caption{Knots with the same Kauffman polynomial and coloring invariant that are distinguished by the total dihedral linking invariant.  We restrict to knots of crossing number $\leq 12$ with determinant divisible only by primes $\leq 7$. Note that these knots also have the same HOMFLY-PT polynomial.}
\label{same_kauffman_diff_dln.tab}
\end{table}
\providecommand{\bysame}{\leavevmode\hbox to3em{\hrulefill}\thinspace}
\providecommand{\MR}{\relax\ifhmode\unskip\space\fi MR }
\providecommand{\MRhref}[2]{%
  \href{http://www.ams.org/mathscinet-getitem?mr=#1}{#2}
}
\providecommand{\href}[2]{#2}

\end{document}